\newtheorem{thm}{Theorem}[section]
\newtheorem{lem}[thm]{Lemma}
\newtheorem{pro}[thm]{Proposition}
\newtheorem{ex}[thm]{Example}
\newtheorem{rmk}[thm]{Remark}
\newtheorem{defi}[thm]{Definition}
\newcommand {\emptycomment}[1]{}
\newcommand{\lon }{\,\rightarrow\,}
\newcommand{\be }{\begin{equation}}
\newcommand{\ee }{\end{equation}}
\newcommand{\g}{\mathfrak g}
\newcommand{\h}{\mathfrak h}
\newcommand{\huaM}{\mathcal{M}}
\newcommand{\huaC}{{\mathcal{C}}}
\newcommand{\huaD}{\mathcal{D}}
\newcommand{\huaH}{\mathcal{H}}
\newcommand{\huaO}{{\mathcal{O}}}
\newcommand{\frki}{\mathfrak i}
\newcommand{\frkp}{\mathfrak p}
\newcommand{\frkC}{\mathfrak C}
\newcommand{\frkD}{\mathfrak D}
\newcommand{\dM}{\mathrm{d}}
\newcommand{\EXP}{\mathrm{Exp}}
\newcommand{\Courant}[1]{\left\llbracket  #1\right\rrbracket }
\newcommand{\Id}{{\rm{Id}}}
\newcommand{\br}[1]{   [ \cdot,    \cdot  ]   }
\newcommand{\CE}{\mathsf{CE}}
\newcommand{\Hom}{\mathrm{Hom}}
\newcommand{\Der}{\mathrm{Der}}
\newcommand{\NR}{\mathrm{NR}}
\newcommand{\Ad}{\mathrm{Ad}}
\newcommand{\Aut}{\mathrm{Aut}}
\newcommand{\gl}{\mathfrak {gl}}
\newcommand{\ad}{\mathrm{ad}}
\newcommand{\Img}{\mathrm{Im}}
\newcommand{\Sym}{\mathsf{S}}
\begin{document}

\title{Deformations, cohomologies and integrations of relative difference Lie algebras}

\author{Jun Jiang}
\address{Department of Mathematics, Jilin University, Changchun 130012, Jilin, China}
\email{jiangjmath@163.com}

\author{Yunhe Sheng}
\address{Department of Mathematics, Jilin University, Changchun 130012, Jilin, China}
\email{shengyh@jlu.edu.cn}


\begin{abstract}
In this paper, first using the higher derived brackets, we give the controlling algebra of relative difference Lie algebras, which are also called crossed homomorphisms or differential Lie algebras of weight 1 when the action is the adjoint action. Then using Getzler's twisted $L_\infty$-algebra, we define the cohomology of relative difference Lie algebras. In particular, we define the regular cohomology of difference Lie algebras by which infinitesimal deformations of difference Lie algebras are classified. We also define the cohomology of difference Lie algebras with coefficients in  arbitrary representations, and using the second cohomology group to classify abelian extensions of difference Lie algebras. Finally, we show that any relative difference Lie algebra can be integrated to a relative difference Lie group in a functorial way.
\end{abstract}


\keywords{relative difference operator, relative difference Lie algebra, representation, cohomology, deformation, extension, integration. }

\maketitle

\tableofcontents

\allowdisplaybreaks


\section{Introduction}

Crossed homomorphisms on groups already appeared in Whitehead's earlier work ~\cite{Whi}  and were later applied to study non-abelian Galois cohomology~\cite{Se}.
The concept of crossed homomorphisms on Lie algebras was introduced in \cite{Lue} in the study of non-abelian extensions of Lie algebras. By differentiation, crossed homomorphisms on Lie groups give rise to crossed homomorphisms on the corresponding Lie algebras \cite{GLS}. Crossed homomorphisms can be applied to study post-Lie algebras \cite{MQ}. Recently in \cite{PSTZ},
the authors   constructed actions of monoidal categories using crossed homomorphisms on Lie algebras  in the study of representations of Cartan type Lie algebras. The authors also studied deformations and cohomologies of crossed homomorphisms, and classified infinitesimal deformations of crossed homomorphisms using the second cohomology group.

In the definition of a crossed homomorphism $D:\g\to \h$, there is an action $\rho:\g\to\Der(\h)$ of a Lie algebra $\g $ on another Lie algebra $\h$. In particular if the action is the adjoint action of a Lie algebra on itself, a crossed homomorphism is exactly a differential operator of weight 1 \cite{GK,LGG}, which is also called a difference operator \cite{Lev,PS1}. Since in this paper, we not only study operators, but also study Lie algebras, actions and operators simultaneously, so we will use the terminology of a relative difference operator instead of a crossed homomorphism, and use the terminology of a relative difference Lie algebra to indicate the quadruple $(\g,\h,\rho,D)$.

A difference operator  can be viewed as a generalization  of a derivation, and a difference Lie algebra  can be viewed as a generalization  of a LieDer pair  introduced in \cite{TFS}, which consists of a Lie algebra and a derivation on it. Note that the deformation theory and the cohomology theory of LieDer pairs were studied in \cite{TFS}, and generalized  to other algebraic structures \cite{Das,DM}.  In \cite{DL,Lod}, the authors also study associative algebras with derivations from the operadic point of view. Therefore, it is natural to extend the study of deformations and cohomologies of relative difference operators and LieDer pairs to the context of relative difference Lie algebras. Another motivation for such a study due to that difference operators are formal inverses of Rota-Baxter operators, while the deformation  and cohomology theories of the latter are of much interest recently \cite{CC,Das20,LST,TBGS,WZ}. There is a general principle for the  deformation theory of an algebraic structure proposed by Deligne, Drinfeld and Kontsevich: on the one hand, for a given
  algebraic structure, there should be a differential graded Lie algebra (or an $L_\infty$-algebra,   called the controlling algebra) whose Maurer-Cartan elements characterize deformations of this object. On the other hand, there
should be a suitable cohomology so that the infinitesimal of a formal deformation can be identified with a cohomology class. In this paper, we use the higher derived bracket \cite{Vo} to construct the controlling algebra of relative difference Lie algebras, and also introduce the cohomology theory for relative difference Lie algebras.  Applications are given to study infinitesimal deformations and abelian extensions of difference Lie algebras.

Globally a Lie algebra can be integrated to a connected and simply connected Lie group, and a derivation can be integrated to an automorphism. So it is natural to study the integration of a relative difference Lie algebra. Note that it was shown in \cite{GLS} that the differentiation of a relative difference Lie group gives rise to a relative difference Lie algebra. We show that one can integrate a relative difference Lie algebra to a relative difference Lie group in a functorial way.

The paper is organized as follows. In Section \ref{sec:rec}, we recall the controlling algebras and cohomologies of LieAct triples and relative difference operators. In Section \ref{sec:MC}, using the higher derived bracket, we construct an $L_\infty$-algebra, whose Maurer-Cartan elements are relative difference Lie algebra structures (Theorem \ref{thmmcli}). Then we introduce the cohomology theory of  relative difference Lie algebras. In Section \ref{sec:dif}, we apply the above general framework for relative difference Lie algebras to introduce the cohomology of difference Lie algebras. First we introduce the regular cohomology of a   difference Lie algebra, and use the second cohomology group to classify infinitesimal deformations (Theorem \ref{thm:inf-cla}). Then we introduce the cohomology of a   difference Lie algebra with coefficients in an arbitrary representation, and use the second cohomology group to classify abelian extensions of  difference Lie algebras (Theorem \ref{extension 2}).  In Section \ref{sec:int}, we show that a relative difference Lie algebra can be integrated to a relative difference Lie group (Theorem \ref{thm:obj}), and a homomorphism between relative difference Lie algebras can be integrated to a homomorphism between the integrated relative difference Lie groups (Theorem \ref{thm:int-functor}).

\section{The controlling algebras and cohomologies of LieAct triples and relative difference operators}\label{sec:rec}

Let $V$ be a vector space. Define the graded vector space
$\oplus_{n=0}^\infty \Hom(\wedge^{n+1}V,V)$
with the degree of elements in $\Hom(\wedge^nV,V)$ being $n-1$. For $f\in \Hom(\wedge^mV,V), g\in \Hom(\wedge^nV,V)$, the  Nijenhuis-Richardson bracket $[\cdot,\cdot]_\NR$ is defined by
$$ [f,g]_{\NR}:=f\circ g- (-1)^{(m-1)(n-1)}g\circ f,$$
with $f\circ g\in \Hom(\wedge^{m+n-1}V,V)$ being defined by
\begin{equation}
(f\circ g)(v_1,\cdots,v_{m+n-1}):=\sum_{\sigma\in S(n,m-1)} (-1)^\sigma f(g(v_{\sigma(1)},\cdots,v_{\sigma(n)}),v_{\sigma(n+1)}, \cdots,v_{\sigma(m+n-1)}),
\label{eq:fgcirc}
\end{equation}
where the sum is over $(n,m-1)$-shuffles. Recall that a permutation $\tau\in S_n$ is called an  $(i,n-i)$-shuffle if $\tau(1)<\cdots <\tau(i)$ and $\tau(i+1)<\cdots <\tau(n)$.  
Then $\big(\oplus_{n=0}^\infty \Hom(\wedge^{n+1}V,V),[\cdot,\cdot]_{\NR}\big)$ is a   graded Lie algebra \cite{NR,NR2}. With this setup, a Lie algebra structure on $V$ is precisely a degree 1 solution $\omega\in \Hom(\wedge^2V,V)$ of the Maurer-Cartan equation
$$ [\omega,\omega ]_{\NR}=0.$$

\subsection{The controlling algebra and cohomologies of LieAct triples}

 In this subsection, we recall the graded Lie algebra whose Maurer-Cartan elements are LieAct triples, which consists of Lie algebras   $\g$ and $\h$, and  an action  of the Lie algebra $\g$ on $\h$. We also give the cohomologies of LieAct triples as byproducts.

Let $\g$ and $\h$ be vector spaces. The elements in $\g$ are denoted by $x_i$ and the elements in $\h$ are denoted by $u_j$. For a multilinear map $\kappa: \wedge^{k}\g\otimes\wedge^{l}\h\lon \g,$ we define $\hat{\kappa}\in\Hom(\wedge^{k+l}(\g\oplus \h), \g\oplus \h)$ by
\begin{equation*}
\hat{\kappa}(x_1+u_1,\cdots,x_{k+l}+u_{k+l})=\sum_{\tau\in S(k,l)}(-1)^{\tau}\Big(\kappa(x_{\tau(1)},\cdots,x_{\tau(k)},u_{\tau(k+1)},\cdots,u_{\tau(k+l)}),0\Big).
\end{equation*}
Similarly, for $\kappa: \wedge^{k}\g\otimes\wedge^{l}\h\lon \h,$ we define $\hat{\kappa}\in\Hom(\wedge^{k+l}(\g\oplus \h), \g\oplus \h)$ by
\begin{equation*}
\hat{\kappa}(x_1+u_1,\cdots,x_{k+l}+u_{k+l})=\sum_{\tau\in S(k,l)}(-1)^{\tau}\Big(0, \kappa(x_{\tau(1)},\cdots,x_{\tau(k)},u_{\tau(k+1)},\cdots,u_{\tau(k+l)})\Big).
\end{equation*}
The linear map $\hat{\kappa}$ is called a lift of $\kappa$.
Denote by $\g^{k,l}=\wedge^{k}\g\otimes\wedge^{l}\h$. Then $\wedge^{n}(\g\oplus \h)\cong\oplus_{k+l=n}\g^{k,l}$ and $\Hom(\wedge^{n}(\g\oplus \h), \g\oplus \h)\cong(\oplus_{k+l=n}\Hom(\g^{k,l},\g))\oplus(\oplus_{k+l=n}\Hom(\g^{k,l},\h))$, where the isomorphism is the lift.
Moreover, using this isomorphism, we have

\begin{pro}\label{pro:M} {\rm(\cite{CC})}
  The graded vector space
\begin{equation}\label{huaM}
\huaM=\oplus_{k=0}^{+\infty}\Big(\Hom(\wedge^{k+1}\g, \g)\oplus(\oplus_{{i+j=k+1}\atop{j\geq 1}}\Hom(\wedge^{i}\g\otimes\wedge^{j}\h, \h))\Big)
\end{equation} is a graded Lie subalgebra of the graded Lie algebra $L=(\oplus_{k=0}^{+\infty}\Hom(\wedge^{k+1}(\g\oplus \h), \g\oplus \h), [\cdot,\cdot]_{\NR})$.
\end{pro}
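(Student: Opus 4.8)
The plan is to replace the component description of $\huaM$ in \eqref{huaM} by an intrinsic characterization of its elements inside $L$, and then to check that this characterization is manifestly stable under the product $\circ$ of \eqref{eq:fgcirc} defining the Nijenhuis--Richardson bracket. Write $\pr_\g\colon\g\oplus\h\to\g$ and $\pr_\h\colon\g\oplus\h\to\h$ for the two projections. Reading off the two families of summands in \eqref{huaM} through the lift isomorphism, I would first observe that a homogeneous element $\phi\in L$ lies in $\huaM$ if and only if it satisfies: (A) $\pr_\g\circ\phi$ vanishes whenever at least one of its arguments lies in $\h$; and (B) $\pr_\h\circ\phi$ vanishes when all of its arguments lie in $\g$. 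Equivalently, $\phi$ takes values in $\g$ on all-$\g$ arguments, and takes values in $\h$ as soon as one argument lies in $\h$. Indeed (B) excludes exactly the $\h$-valued maps with no $\h$-input, i.e. the summands $\Hom(\wedge^{i}\g,\h)$ (the $j=0$ case), while (A) excludes exactly the $\g$-valued maps with $j\ge 1$, i.e. the summands $\Hom(\wedge^{i}\g\otimes\wedge^{j}\h,\g)$; what survives is precisely \eqref{huaM}. Since (A) and (B) are linear conditions and $\huaM$ is visibly a graded subspace, only closure under the bracket requires proof.

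Next I would show (A) and (B) are preserved by $f\circ g$, whence they are preserved by $[f,g]_{\NR}=f\circ g-(-1)^{(m-1)(n-1)}g\circ f$ and $\huaM$ is closed under $[\cdot,\cdot]_{\NR}$. The decisive feature is that the verification is term-by-term in the shuffle sum of \eqref{eq:fgcirc} and entirely sign-free: in each summand a given input is either passed to the inner map $g$ or retained as an outer argument of $f$. For (A): if some input lies in $\h$, then either it is an argument of $g$, so (A) for $g$ gives $g(\cdots)\in\h$ and hence $f$ is evaluated with its first slot in $\h$, so (A) for $f$ sends that summand into $\h$; or it is an outer argument of $f$, so (A) for $f$ sends the summand into $\h$ directly. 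For (B): if all inputs lie in $\g$, then (B) for $g$ gives $g(\cdots)\in\g$, so $f$ is evaluated on arguments all lying in $\g$, and (B) for $f$ sends the summand into $\g$. Summing over all shuffle terms, $\pr_\g\circ(f\circ g)$ kills any tuple containing an $\h$-argument and $\pr_\h\circ(f\circ g)$ kills any all-$\g$ tuple.

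This yields that $f\circ g$ again satisfies (A) and (B), so $\huaM$ is closed under $\circ$ and hence under $[\cdot,\cdot]_{\NR}$, proving it is a graded Lie subalgebra of $L$. I do not anticipate a genuine obstacle: the only delicate point is the bookkeeping identifying the component conditions of \eqref{huaM} with (A)--(B) --- matching the ``$j\ge1$'' constraint on the $\h$-valued part with (B) and the ``no $\h$-input'' constraint on the $\g$-valued part with (A). Once this dictionary is fixed, the term-by-term, sign-free nature of the closure argument renders the shuffle signs in \eqref{eq:fgcirc} irrelevant, so no delicate sign analysis is needed.
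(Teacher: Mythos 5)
The paper itself gives no proof of this proposition (it is quoted from \cite{CC}), so your argument can only be judged on its own terms, and it stands: the intrinsic characterization is right, since a homogeneous $\phi\in L$ is determined by its restrictions to tuples of ``pure'' arguments (each entry in $\g$ or in $\h$), and under the lift isomorphism condition (A) kills exactly the components $\Hom(\wedge^{i}\g\otimes\wedge^{j}\h,\g)$ with $j\ge 1$ while (B) kills exactly $\Hom(\wedge^{i}\g,\h)$, leaving precisely the summands of \eqref{huaM}. The closure argument is also sound, and the one point it silently relies on is in fact available from your own reformulation: for $g$ satisfying (A)--(B), the output $g(v_{\sigma(1)},\cdots,v_{\sigma(n)})$ on pure arguments is itself pure (in $\g$ if all inputs are in $\g$, in $\h$ otherwise), which is what licenses applying (A)/(B) of $f$ to the composite term without any further multilinear decomposition; with that observed, each shuffle summand of \eqref{eq:fgcirc} lands in the correct subspace regardless of signs, so $\huaM$ is closed under $\circ$ and hence under $[\cdot,\cdot]_{\NR}$.
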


Let $(\g, [\cdot,\cdot]_{\g})$ and $(\h, [\cdot,\cdot]_{\h})$ be Lie algebras. Denote by $\Der({\g})$ and $\Der({\h})$ the Lie algebras of derivations on $\g$ and $\h$ respectively. A Lie algebra homomorphism $\rho: \g\lon \Der(\h)$ is called an action of $\g$ on $\h$. Sometimes we denote $[\cdot,\cdot]_\g$ and $[\cdot,\cdot]_\h$ by $\pi$ and $\mu$ respectively in the sequel.

\begin{defi}
Let $(\g,[\cdot,\cdot]_\g)$ and $(\h,[\cdot,\cdot]_\h)$ be Lie algebras and $\rho:\g\lon\Der(\h)$ be an action of $(\g, [\cdot,\cdot]_{\g})$ on $(\h, [\cdot,\cdot]_{\h})$. The triple $(\g, \h, \rho)$ is called a {\bf LieAct triple}.
\end{defi}

\begin{thm}\label{thmMC}{\rm(\cite{CC})}
Let $\g$ and $\h$ be vector spaces. Then Maurer-Cartan elements of the graded Lie algebra  $(\huaM, [\cdot,\cdot]_{\NR})$  are LieAct triple structures.
\end{thm}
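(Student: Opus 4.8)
The plan is to identify the degree $1$ elements of $\huaM$ with skew-symmetric brackets of semidirect type on $\g\oplus\h$, and then to extract the LieAct axioms from the single Jacobi identity by sorting it according to bidegree. First I would pin down the degree $1$ part of $\huaM$. Since the lift of $\kappa\in\Hom(\wedge^{k+1}\g,\g)$, respectively $\kappa\in\Hom(\wedge^{i}\g\otimes\wedge^{j}\h,\h)$ with $i+j=k+1$, lands in $\Hom(\wedge^{k+1}(\g\oplus\h),\g\oplus\h)$ and therefore carries degree $k$, imposing degree $1$ forces $k=1$. This leaves exactly the summand $\Hom(\wedge^2\g,\g)$ together with the cases $(i,j)=(1,1)$ and $(i,j)=(0,2)$ of the second factor. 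Thus a degree $1$ element is a triple $\Omega=\pi+\rho+\mu$ with $\pi\in\Hom(\wedge^2\g,\g)$, $\rho\in\Hom(\g\otimes\h,\h)$ and $\mu\in\Hom(\wedge^2\h,\h)$.

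Next I would compute the lift $\hat\Omega=\hat\pi+\hat\rho+\hat\mu$ explicitly from the shuffle formulas defining the lift. The factors $\hat\pi$ and $\hat\mu$ come from trivial shuffle sets $S(2,0)$ and $S(0,2)$, while $\hat\rho$ involves the two $(1,1)$-shuffles; assembling the three contributions gives, for $x_i+u_i\in\g\oplus\h$,
$$\hat\Omega(x_1+u_1,x_2+u_2)=\big(\pi(x_1,x_2),\ \rho(x_1,u_2)-\rho(x_2,u_1)+\mu(u_1,u_2)\big).$$
Hence $\hat\Omega$ is precisely the skew-symmetric bracket on $\g\oplus\h$ assembled from a bracket $\pi$ on $\g$, a bracket $\mu$ on $\h$, and an action datum $\rho$. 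Because $\huaM$ is a graded Lie subalgebra of $L$ by Proposition \ref{pro:M}, the equation $[\Omega,\Omega]_\NR=0$ computed in $\huaM$ agrees with $[\hat\Omega,\hat\Omega]_\NR=0$ computed in $L$; and by the characterization of degree $1$ Maurer-Cartan elements of the Nijenhuis-Richardson algebra recalled above (applied with $V=\g\oplus\h$), this holds if and only if $\hat\Omega$ is a Lie algebra structure on $\g\oplus\h$. Note that skew-symmetry is automatic, as any lift already lands in $\Hom(\wedge^2(\g\oplus\h),\g\oplus\h)$.

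It then remains to decompose the Jacobi identity for $\hat\Omega$ along the bidegree splitting $\wedge^3(\g\oplus\h)\cong\oplus_{k+l=3}\g^{k,l}$. Evaluating $[\hat\Omega,\hat\Omega]_\NR=0$ separately on inputs of type $\g^{3,0}$, $\g^{2,1}$, $\g^{1,2}$, $\g^{0,3}$ yields four independent identities: the $\g^{3,0}$ piece is the Jacobi identity for $\pi$, so $(\g,\pi)$ is a Lie algebra; the $\g^{0,3}$ piece is the Jacobi identity for $\mu$, so $(\h,\mu)$ is a Lie algebra; the $\g^{1,2}$ piece says that each $\rho(x)$ is a derivation of $(\h,\mu)$; and the $\g^{2,1}$ piece says $\rho(\pi(x,y))=[\rho(x),\rho(y)]$, i.e.\ $\rho:\g\to\Der(\h)$ is a Lie algebra homomorphism. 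Together these are exactly the defining conditions of a LieAct triple, which completes the identification.

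The content of the argument is essentially bookkeeping, and the one place that demands care is the final bidegree decomposition: one must check that the mixed components $\g^{1,2}$ and $\g^{2,1}$ isolate exactly the derivation property and the homomorphism property, and that the pieces with output in $\g$ and those with output in $\h$ do not interfere. Since the subalgebra structure of $\huaM$ and the degree $1$ Maurer-Cartan characterization are already in hand, no input beyond this careful sorting is required, so I do not expect a genuine obstacle, only a somewhat lengthy verification.
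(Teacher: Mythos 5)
Your proof is correct. The paper does not actually prove this theorem---it is quoted from \cite{CC} without proof---but your argument (identifying the degree-$1$ part of $\huaM$ with semidirect-type skew brackets on $\g\oplus\h$ via the lift, using that $\huaM$ is a graded Lie subalgebra so the Maurer--Cartan equation may be computed in the full Nijenhuis--Richardson algebra where it characterizes Lie structures, and then sorting the Jacobi identity by bidegree into the Jacobi identities for $\pi$ and $\mu$, the derivation property, and the homomorphism property of $\rho$) is the standard route and is a complete, correct verification.
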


Let $(\g,\h,\rho)$ be a LieAct triple. By Theorem \ref{thmMC}, $\Pi=\pi+\rho+\mu$ is a Maurer-Cartan element of the graded Lie algebra $(\huaM, [\cdot,\cdot]_{\NR})$, where  the action $\rho:\g\to\Der(\h)$ is identified with   an element in $\Hom(\g\otimes \h,\h)$. By the graded Jacobi identity, $\dM_{\Pi}=[\Pi,\cdot]_{\NR}$ is a graded derivation of the graded Lie algebra $(\huaM, [\cdot,\cdot]_{\NR})$ satisfying $\dM_{\Pi}^{2}=0$. Therefore we have
\begin{thm}
Let $(\g,\h,\rho)$ be a LieAct triple. Then $(\huaM, [\cdot,\cdot]_{\NR}, \dM_{\Pi})$ is a differential graded Lie algebra.
\end{thm}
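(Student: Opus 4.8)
The plan is to exploit the standard principle that a Maurer--Cartan element of a graded Lie algebra twists it into a differential graded Lie algebra. Since Proposition~\ref{pro:M} already furnishes $(\huaM,[\cdot,\cdot]_{\NR})$ as a graded Lie algebra and Theorem~\ref{thmMC} identifies $\Pi=\pi+\rho+\mu$ as a Maurer--Cartan element (so that $[\Pi,\Pi]_{\NR}=0$), it remains only to verify that the operator $\dM_{\Pi}=[\Pi,\cdot]_{\NR}$ is a well-defined graded derivation of degree $1$ squaring to zero. First I would record homogeneity: under the convention that $\Hom(\wedge^{n}V,V)$ sits in degree $n-1$, each of $\pi,\mu$ and $\rho\in\Hom(\g\otimes\h,\h)$ has arity $2$, hence degree $1$, so $\Pi$ is homogeneous of degree $1$ and $\dM_{\Pi}$ raises degree by $1$. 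Well-definedness is then immediate: $\Pi\in\huaM$ and $\huaM$ is a graded Lie subalgebra of $L$, so $[\Pi,f]_{\NR}\in\huaM$ for every $f\in\huaM$.

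Next I would establish the graded derivation property, which is a direct transcription of the graded Jacobi identity for $(\huaM,[\cdot,\cdot]_{\NR})$. For homogeneous $f,g$, using $|\Pi|=1$, this reads
\begin{equation*}
[\Pi,[f,g]_{\NR}]_{\NR}=[[\Pi,f]_{\NR},g]_{\NR}+(-1)^{|f|}[f,[\Pi,g]_{\NR}]_{\NR},
\end{equation*}
which says exactly that $\dM_{\Pi}$ is a degree $1$ graded derivation of the Nijenhuis--Richardson bracket.

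Finally, for the square-zero property I would again invoke the graded Jacobi identity, now with both outer arguments equal to $\Pi$. Applying it to $[\Pi,[\Pi,f]_{\NR}]_{\NR}$ and using $|\Pi|=1$ yields
\begin{equation*}
2\,[\Pi,[\Pi,f]_{\NR}]_{\NR}=[[\Pi,\Pi]_{\NR},f]_{\NR},
\end{equation*}
so that $\dM_{\Pi}^{2}=\frac{1}{2}[[\Pi,\Pi]_{\NR},\cdot]_{\NR}$, and the Maurer--Cartan equation $[\Pi,\Pi]_{\NR}=0$ forces $\dM_{\Pi}^{2}=0$. I do not anticipate a genuine obstacle here: the only substantive input is the Maurer--Cartan equation supplied by Theorem~\ref{thmMC}, while everything else is formal manipulation of the graded Jacobi identity. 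The one point deserving care is the degree and sign bookkeeping in the lift conventions, to ensure that $\Pi$ really is the degree $1$ element whose bracket square encodes the defining equations of the LieAct triple.
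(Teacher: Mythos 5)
Your proposal is correct and follows essentially the same route as the paper: the paper also deduces the result by combining Theorem~\ref{thmMC} (that $\Pi=\pi+\rho+\mu$ is a Maurer--Cartan element) with the graded Jacobi identity, which makes $\dM_{\Pi}=[\Pi,\cdot]_{\NR}$ a square-zero graded derivation of $(\huaM,[\cdot,\cdot]_{\NR})$. The only difference is that you write out the degree bookkeeping and the identities $[\Pi,[f,g]_{\NR}]_{\NR}=[[\Pi,f]_{\NR},g]_{\NR}+(-1)^{|f|}[f,[\Pi,g]_{\NR}]_{\NR}$ and $2[\Pi,[\Pi,f]_{\NR}]_{\NR}=[[\Pi,\Pi]_{\NR},f]_{\NR}$ explicitly, which the paper leaves implicit.
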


Let $(\g,\h,\rho)$ be a LieAct triple. For $n\geq 1$, we define the space of $n$-cochains $C^{n}(\g,\h,\rho)$ to be
\begin{equation*}
C^{n}(\g,\h,\rho)=\Hom(\wedge^{n}\g,\g)\oplus\Big(\oplus_{i=1}^{n}\Hom(\wedge^{n-i}\g\otimes\wedge^{i}\h,\h)\Big).
\end{equation*}
Define the coboundary operator $\frkD:C^{n}(\g,\h,\rho)\lon C^{n+1}(\g,\h,\rho)$ by
\begin{equation*}
\frkD f=(-1)^{n-1}[\Pi, f]_{\NR},\quad \forall f\in C^{n}(\g,\h,\rho).
\end{equation*}

Since $\dM_{\Pi}^2=0$, it follows that $\frkD\circ\frkD=0$. Thus we obtain a cochain complex $(\oplus_{n=1}^{+\infty}C^{n}(\g,\h,\rho),\frkD)$.

\begin{defi}
The cohomology of the cochain complex $(\oplus_{n=1}^{+\infty}C^{n}(\g,\h,\rho),\frkD)$ is defined to be the {\bf cohomology of the LieAct triple} $(\g,\h,\rho)$. Denote the $k$-th cohomology group of the cochain complex $(\oplus_{n=1}^{+\infty}C^{n}(\g,\h,\rho),\frkD)$ by $\huaH^{k}(\g,\h,\rho)$.
\end{defi}

For $f=(f_0, \cdots, f_i,\cdots, f_n)\in C^{n}(\g, \h, \rho)$, where $f_0\in\Hom(\wedge^{n}\g,\g), f_i\in\Hom(\wedge^{n-i}\g\otimes\wedge^{i}\h, \h), 1\leq i\leq n$. Write $\frkD f=((\frkD f)_0, \cdots, (\frkD f)_{n+1})$. By direct computation, we obtain
\begin{eqnarray*}
(\frkD f)_0&=&(-1)^{n-1}([\Pi,f]_{\NR})_0=(-1)^{n-1}[\pi, f_0]_{\NR},\\
(\frkD f)_1&=&(-1)^{n-i}([\Pi,f]_{\NR})_1=(-1)^{n-1}([\pi+\rho, f_1]_{\NR}+[\rho, f_0]_{\NR}),\\
(\frkD f)_{i}&=&(-1)^{n-1}([\Pi,f]_{\NR})_i=(-1)^{n-1}([\pi+\rho, f_i]_{\NR}+[\mu, f_{i-1}]_{\NR}), \quad 2\leq i\leq n-1, \\
(\frkD f)_{n}&=&(-1)^{n-1}([\Pi,f]_{\NR})_n=(-1)^{n-1}([\rho, f_n]_{\NR}+[\mu, f_{n-1}]_{\NR}), \\
(\frkD f)_{n+1}&=&(-1)^{n-1}([\Pi,f]_{\NR})_{n+1}=(-1)^{n-1}([\mu, f_n]_{\NR}).
\end{eqnarray*}
In particular, we have
$$
(\frkD f)_0=(-1)^{n-1}[\pi, f_0]_{\NR}=\dM^{\CE}_{\ad}f_{0},
$$
where $\dM^{\CE}_{\ad}:\Hom(\wedge^{n}\g,\g)\to \Hom(\wedge^{n+1}\g,\g)$ is the Chevalley-Eilenberg coboundary operator of the Lie algebra $\g$ with coefficients in the adjoint representation $\ad$.

\subsection{The controlling algebra and cohomologies of   relative difference operators}

In this subsection, we recall the differential graded Lie algebra whose Maurer-Cartan elements are relative difference operators, and the cohomologies of  relative difference operators.
\begin{defi}
Let $(\g,\h,\rho)$ be a LieAct triple. A linear map $D:\g\lon\h$ is called a {\bf relative  difference operator} with respect to the action $\rho$ if
\begin{equation}\label{eq-defich}
D([x, y]_\g)=\rho(x)D(y)-\rho(y)D(x)+[D(x), D(y)]_{\h}, \quad \forall x,y\in\g.
\end{equation}

A {\bf relative difference Lie algebra}, denoted by $(\g,\h,\rho,D)$, consists of a LieAct triple $(\g, \h, \rho)$ and a relative  difference operator $ D$.

A relative difference operator from $\g$ to $\g$ with respect to the adjoint action is called a {\bf  difference operator}. A Lie algebra $\g$ equipped with a difference operator $D$ is called a {\bf difference Lie algebra}, and denoted by $(\g,D)$.
\end{defi}

\begin{rmk}
\begin{itemize}
\item[\rm(i)] If the action $\rho$ of $\g$ on $\h$ is zero, then a relative difference operator from $\g$ to $\h$ is  a Lie algebra homomorphism.
\item[\rm(ii)] If $\h$ is commutative, then a relative difference operator from $\g$ to $\h$ is simply a derivation from $\g$ to $\h$ with respect to the representation $\rho$.
\end{itemize}
\end{rmk}

\begin{rmk}
We clarify alternative terminologies of relative difference operators. A relative difference operator is also called a crossed homomorphism, which was introduced in \cite{Lue} and further studied in \cite{MQ,PSTZ} recently. When the action is the adjoint action, a difference operator is also called a differential operator of weight $1$. See \cite{GK, GSZ,LGG} for more details.
\end{rmk}

The graphs of maps can be used to characterize relative difference operators on Lie algebras.
\begin{pro}{\rm(\cite{PSTZ})}\label{graphch}
Let $(\g, \h, \rho)$ be a LieAct triple. Then a linear map $D: \g\lon \h$ is a relative difference operator if and only if the graph $G(D)=\{(x,D(x))|x\in\g\}$ of $D$ is a Lie subalgebra of the semi-direct product Lie algebra $\g\ltimes_\rho \h$, where the Lie bracket $[\cdot,\cdot]_\ltimes$ is given by
$$
[x+u,y+v]_\ltimes=[x,y]_\g+\rho(x)v-\rho(y)u+[u,v]_\h,\quad \forall x,y\in\g, u,v \in \h.
$$
\end{pro}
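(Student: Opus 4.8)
The plan is to prove the equivalence by a direct computation comparing the defining equation \eqref{eq-defich} of a relative difference operator with the closure condition for the graph $G(D)$ under the semi-direct product bracket $[\cdot,\cdot]_\ltimes$. First I would observe that $G(D)=\{(x,D(x))\mid x\in\g\}$ is automatically a linear subspace of $\g\oplus\h$ isomorphic to $\g$ via the projection to the first factor, so the only thing at stake is whether $G(D)$ is closed under $[\cdot,\cdot]_\ltimes$; this reduces the statement to checking the bracket of two generic elements $(x,D(x))$ and $(y,D(y))$.

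Next I would compute that bracket explicitly. Using the formula for $[\cdot,\cdot]_\ltimes$ with $u=D(x)$ and $v=D(y)$ gives
\begin{equation*}
[x+D(x),\,y+D(y)]_\ltimes=[x,y]_\g+\big(\rho(x)D(y)-\rho(y)D(x)+[D(x),D(y)]_\h\big).
\end{equation*}
The first component lands in $\g$ and equals $[x,y]_\g$, so the result lies in $G(D)$ if and only if its second component coincides with $D$ applied to the first component, i.e.\ if and only if
\begin{equation*}
\rho(x)D(y)-\rho(y)D(x)+[D(x),D(y)]_\h=D([x,y]_\g).
\end{equation*}
This is exactly equation \eqref{eq-defich}, which establishes both directions at once: if $D$ is a relative difference operator the second component matches and the bracket stays in $G(D)$, while conversely if $G(D)$ is a subalgebra then for every $x,y$ the bracket must lie in $G(D)$, forcing the identity above.

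There is no serious obstacle here, since the argument is a one-line rearrangement of the semi-direct product bracket; the only point requiring mild care is to note that closure of $G(D)$ as a subspace means its second coordinate is determined by $D$ evaluated on the first coordinate, so the membership condition is genuinely an identity of maps $\g\otimes\g\to\h$ rather than merely containment in $\h$. I would also remark that $\g\ltimes_\rho\h$ is indeed a Lie algebra, so that the notion of ``Lie subalgebra'' is well posed; this follows from $\rho$ being an action, i.e.\ a Lie algebra homomorphism into $\Der(\h)$, as recorded in the definition of a LieAct triple.
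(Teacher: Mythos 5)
Your proof is correct and complete: the computation of $[x+D(x),y+D(y)]_\ltimes$ and the observation that membership in $G(D)$ is precisely the matching condition on the second component is exactly the right argument, and it handles both directions simultaneously. Note that the paper itself states this proposition as a recalled result from the cited reference and gives no proof, so your direct verification is precisely the standard argument that fills in the omitted details; nothing is missing.
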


\begin{defi}
Let $(\g,\h,\rho,D)$ and $(\g',\h',\rho',D')$ be two relative difference Lie algebras. A homomorphism from $(\g,\h,\rho,D)$ to $(\g',\h',\rho',D')$ consists of a Lie algebra homomorphism $\psi_\g:\g\rightarrow\g'$ and a Lie algebra homomorphism $\psi_\h:\h\lon\h'$ such that
\begin{eqnarray}
\label{hom-rbo1}D'\circ\psi_\g&=&\psi_\h\circ D,\\
\label{hom-rbo2}\psi_\h\rho(x)(u)&=&\rho'(\psi_\g(x))(\psi_\h(u)),\quad \forall x\in\g, u\in \h.
\end{eqnarray}
\end{defi}
In fact, \eqref{hom-rbo2} is equivalent to that $(\psi_\g, \psi_\h)$ is a homomorphism from the Lie algebra $\g\ltimes _\rho\h$ to $\g\ltimes_{\rho'}\h'$.

In \cite{PSTZ}, the authors gave the Maurer-Cartan characterization of relative difference operators. Let $(\g,\h,\rho)$ be a LieAct triple. Consider the graded vector space
$$\huaC^*(\g,\h):=\oplus_{k\geq 1}\Hom(\wedge^{k}\g,\h).$$
Define a linear map $d_{\pi+\rho}:\Hom(\wedge^m\g,\h)\lon\Hom(\wedge^{m+1}\g,\h)$ by $d_{\pi+\rho}f=[\pi+\rho,f]_\NR$ for all $f\in \Hom(\wedge^m\g,\h)$. Define a skew-symmetric bracket operation $\Courant{\cdot,\cdot}: \Hom(\wedge^m\g,\h)\times \Hom(\wedge^n\g,\h)\longrightarrow \Hom(\wedge^{m+n}\g,\h)$ by
\begin{eqnarray}
&&\nonumber\Courant{f_1,f_2}(x_1,\cdots,x_{m+n})\\
\nonumber&=&(-1)^{(m-1)}[\mu,[f_1,f_2]_\NR]_\NR(x_1,\cdots,x_{m+n})\\
\label{graded-Lie}&=&(-1)^{mn+1}\sum_{\sigma\in  S(m,n)}(-1)^{\sigma}[f_1(x_{\sigma(1)},\cdots,x_{\sigma(m)}),f_2(x_{\sigma(m+1)},\cdots,x_{\sigma(m+n)})]_\h
\vspace{-.1cm}
\end{eqnarray}
for all $f_1\in\Hom(\wedge^m\g,\h)$ and $f_2\in\Hom(\wedge^n\g,\h)$.

\begin{pro}\label{crossed-homo-MC}{\rm(\cite{PSTZ})}
  With the above notations, $(\huaC^*(\g,\h),\Courant{\cdot,\cdot},d_{\pi+\rho})$ is a differential graded Lie algebra. Its Maurer-Cartan elements are precisely   relative difference operators from $\g$ to $\h$ with respect to the action $\rho$.
\end{pro}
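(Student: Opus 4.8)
The plan is to realize the entire structure as a Voronov higher derived bracket \cite{Vo} living inside the Nijenhuis--Richardson graded Lie algebra $L=(\oplus_{k}\Hom(\wedge^{k+1}(\g\oplus\h),\g\oplus\h),[\cdot,\cdot]_\NR)$, so that both the differential graded Lie algebra axioms and the Maurer--Cartan characterization fall out of one general theorem. Concretely, I would take as V-data the graded Lie algebra $L$, the graded subspace $\frka:=\huaC^*(\g,\h)=\oplus_{k\geq1}\Hom(\wedge^k\g,\h)$ embedded via the lift, the projection $P\colon L\to\frka$, and the element $\Pi=\pi+\rho+\mu$. The organizing device is the decomposition of $L$ by the output ($\g$ or $\h$) together with the number of $\h$-inputs: $\frka$ is the piece (output $\h$, no $\h$-input), while $\ker P$ is the sum of (output $\g$) and (output $\h$, at least one $\h$-input).

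First I would verify the hypotheses of the derived bracket theorem. The subspace $\frka$ is abelian: for $f_1,f_2$ valued in $\h$ with only $\g$-inputs, each composite $\hat f_i\circ\hat f_j$ vanishes, since the $\h$-valued output of one cannot be fed into a $\g$-slot of the other, whence $[\hat f_1,\hat f_2]_\NR=0$. Next, $\ker P$ is a graded Lie subalgebra; this is a short case analysis showing that the $\NR$-bracket of two maps, neither of type (output $\h$, no $\h$-input), again either has output in $\g$ or retains an $\h$-input. Finally $\Pi$ is odd (degree $1$) and flat, $[\Pi,\Pi]_\NR=0$: this is precisely the assertion that $\Pi$ is the Maurer--Cartan element of the LieAct triple (Theorem \ref{thmMC}), equivalently that the semidirect bracket $[\cdot,\cdot]_\ltimes$ of Proposition \ref{graphch} obeys the Jacobi identity. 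Voronov's theorem then endows $\frka$ with an $L_\infty$-structure whose brackets are $l_n(f_1,\dots,f_n)=P[\cdots[[\Pi,f_1]_\NR,f_2]_\NR,\dots,f_n]_\NR$.

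The next step is to collapse this $L_\infty$-algebra to the claimed differential graded Lie algebra. Splitting $\Pi=(\pi+\rho)+\mu$ and tracking types, I would show that $l_1(f)=P[\Pi,f]_\NR=[\pi+\rho,f]_\NR=d_{\pi+\rho}f$, because $[\mu,f]_\NR$ retains an $\h$-input and is annihilated by $P$; and that $l_2(f_1,f_2)=P[[\Pi,f_1]_\NR,f_2]_\NR$, which up to the degree-shift sign equals $(-1)^{m-1}[[\mu,f_1]_\NR,f_2]_\NR=\Courant{f_1,f_2}$, since the $(\pi+\rho)$-part of $[\Pi,f_1]_\NR$ already lies in the abelian $\frka$ and brackets trivially with $f_2$. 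The same mechanism forces $l_n=0$ for $n\geq3$: after two steps $[[\Pi,f_1]_\NR,f_2]_\NR$ has returned to $\frka$, and bracketing an element of the abelian $\frka$ with $f_3\in\frka$ vanishes. Hence only $l_1,l_2$ survive, and the $L_\infty$-relations become exactly the differential graded Lie algebra axioms. I want to stress that $d_{\pi+\rho}^2=0$ is \emph{not} a consequence of $\pi+\rho$ being flat in $L$ (it is not); it follows from $[\Pi,\Pi]_\NR=0$ through the projection $P$, which is where the derived-bracket viewpoint does genuine work.

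It remains to read off the Maurer--Cartan elements. A degree-$1$ element of $\frka$ (with $f\in\Hom(\wedge^m\g,\h)$ placed in degree $m$) is exactly a linear map $D\in\Hom(\g,\h)$, and since $l_n=0$ for $n\geq3$ the Maurer--Cartan equation collapses to $d_{\pi+\rho}D+\frac{1}{2}\Courant{D,D}=0$. Expanding the two terms on $(x,y)$ through the shuffle formulas yields $d_{\pi+\rho}D(x,y)=\rho(x)D(y)-\rho(y)D(x)-D([x,y]_\g)$ and $\frac{1}{2}\Courant{D,D}(x,y)=[D(x),D(y)]_\h$, so the equation is exactly \eqref{eq-defich}, i.e. $D$ is a relative difference operator. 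I expect the main obstacle to be bookkeeping rather than ideas: confirming that $\ker P$ is closed under $[\cdot,\cdot]_\NR$, and above all keeping every shuffle sign and the degree shift consistent so that the derived brackets match the stated $\Courant{\cdot,\cdot}$ and the Maurer--Cartan equation lands precisely on \eqref{eq-defich}. As an independent check one could instead argue via Proposition \ref{graphch}, identifying Maurer--Cartan elements with those $D$ whose graph $G(D)$ is a Lie subalgebra of $\g\ltimes_\rho\h$.
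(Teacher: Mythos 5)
Your proposal is correct, and it is essentially the standard proof of this statement: the paper itself only recalls the proposition from \cite{PSTZ}, but both that proof and the paper's own Section \ref{sec:MC} machinery (Proposition \ref{prodefili}, Proposition \ref{pro:danddT}) run on exactly the Voronov derived-bracket argument you describe, with V-data $(L,F,P,\Delta=\Pi)$ and the observation that the derived brackets collapse to $l_1=d_{\pi+\rho}$, $l_2=\Courant{\cdot,\cdot}$, $l_{\geq 3}=0$. One side remark of yours is wrong, though harmless: for a LieAct triple $\pi+\rho$ \emph{is} flat in $L$, i.e.\ $[\pi+\rho,\pi+\rho]_{\NR}=0$, because its $\Hom(\wedge^3\g,\g)$-component is $[\pi,\pi]_{\NR}=0$ (Jacobi identity of $\g$) and its $\Hom(\wedge^2\g\otimes\h,\h)$-component is $2[\pi,\rho]_{\NR}+[\rho,\rho]_{\NR}=0$ (the statement that $\rho$ is a representation of $\g$ on the vector space $\h$), the components involving $\mu$ being separate; so $d_{\pi+\rho}^2=0$ also follows directly from this flatness, in addition to your (valid) derivation through $[\Pi,\Pi]_{\NR}=0$ and $P$. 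Finally, note that your identity $l_2(f_1,f_2)=P[[\Pi,f_1]_{\NR},f_2]_{\NR}=[[\mu,f_1]_{\NR},f_2]_{\NR}$ tacitly corrects the middle line of \eqref{graded-Lie}: as written there, $[\mu,[f_1,f_2]_{\NR}]_{\NR}$ vanishes identically since $F$ is abelian, and it is the bracketing $[[\mu,f_1]_{\NR},f_2]_{\NR}$ that reproduces the explicit shuffle formula you match against \eqref{eq-defich}.
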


 Recall the cohomologies of relative difference operators as following. Let $D:\g\lon\h$ be a relative difference operator with respect to the action $\rho$. Define $\rho_D:\g\to\gl(\h)$ by
\begin{equation}\label{eqreptheta}
\rho_D(x)u=\rho(x)u+[D(x), u]_\h, \quad \forall x\in\g, u\in\h.
\end{equation}
It was shown in \cite[Proposition 3.1]{Lue} that $\rho_D$ is a representation of the Lie algebra $\g$ on $\h$.

Denote by $C^{n}(D)=\Hom(\wedge^{n}\g, \h)$. 

\begin{defi}{\rm(\cite{PSTZ})}
The cohomology of the cochain complex $(\oplus_{n=1}^{+\infty}C^n( D),\dM^{\CE}_{\rho_D})$ is called the {\bf cohomology of the relative difference operator} $D$, where $\dM_{\rho_D}^{\CE}:\Hom(\wedge^{k}\g, \h)\lon\Hom(\wedge^{k+1}\g, \h)$ is the Chevalley-Eilenberg coboundary operator of the Lie algebra $(\g, [\cdot,\cdot]_\g)$ with coefficients in the representation $(\h, \rho_D)$. Denote its $n$-th cohomology group by $\huaH^n(D)$.
\end{defi}

Define a linear map $\dM_{{D}}:\Hom(\wedge^{k}\g, \h)\lon\Hom(\wedge^{k+1}\g,\h)$ by
$\dM_{{D}}=d_{\pi+\rho}+\Courant{{D},\cdot}$. Then there is the following relationship.

\begin{pro}\label{pro:danddT}{\rm(\cite{PSTZ})}
 Let $D:\g\longrightarrow\h$ be a relative difference operator. Then we have
 $$
 \dM^{\CE}_{\rho_{D}}f=(-1)^{k-1}\dM_{D}f=(-1)^{k-1}([\pi+\rho,f]_\NR+[\mu,[D,f]_\NR]_\NR),\quad \forall f\in \Hom(\wedge^k\g,\h).
 $$
\end{pro}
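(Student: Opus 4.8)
The plan is to reduce the statement to explicit cochain formulas and match them term by term, organising the computation around the splitting $\rho_{D}(x)u=\rho(x)u+[D(x),u]_\h$ of \eqref{eqreptheta}. First observe that the second equality is purely definitional: since $D\in\Hom(\g,\h)$ sits in degree $0$, the prefactor $(-1)^{m-1}$ in \eqref{graded-Lie} equals $1$, so $\Courant{D,\cdot}=[\mu,[D,\cdot]_\NR]_\NR$ and hence $\dM_{D}f=d_{\pi+\rho}f+\Courant{D,f}=[\pi+\rho,f]_\NR+[\mu,[D,f]_\NR]_\NR$. Thus the real content is the first equality $\dM^{\CE}_{\rho_{D}}f=(-1)^{k-1}\dM_{D}f$.

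Next I would write out the Chevalley--Eilenberg coboundary operator of $\g$ with coefficients in $(\h,\rho_{D})$,
\begin{align*}
(\dM^{\CE}_{\rho_{D}}f)(x_1,\cdots,x_{k+1})
&=\sum_{i=1}^{k+1}(-1)^{i+1}\rho_{D}(x_i)f(x_1,\cdots,\hat{x}_i,\cdots,x_{k+1})\\
&\quad+\sum_{1\leq i<j\leq k+1}(-1)^{i+j}f([x_i,x_j]_\g,x_1,\cdots,\hat{x}_i,\cdots,\hat{x}_j,\cdots,x_{k+1}),
\end{align*}
and substitute $\rho_{D}=\rho+[D(\cdot),\cdot]_\h$. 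Because the coefficient representation enters this formula only linearly, through the degree-zero terms $\rho_{D}(x_i)f(\cdots)$, this splits $\dM^{\CE}_{\rho_{D}}f$ as $\dM^{\CE}_{\rho}f$, the coboundary of $\g$ with values in the genuine representation $(\h,\rho)$ (recall $\rho$ is an action, hence a representation), plus the twisting term $\sum_{i}(-1)^{i+1}[D(x_i),f(x_1,\cdots,\hat{x}_i,\cdots,x_{k+1})]_\h$.

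It then remains to identify these two summands with $(-1)^{k-1}[\pi+\rho,f]_\NR$ and $(-1)^{k-1}\Courant{D,f}$ respectively. For the first I would use the standard Nijenhuis--Richardson description of Lie algebra cohomology: computing $[\pi,f]_\NR$ and $[\rho,f]_\NR$ through the lifts $\hat{\pi},\hat{\rho},\hat{f}$ and the shuffle formula \eqref{eq:fgcirc}, one checks that $\hat{\pi}\circ\hat{f}=0$ and $\hat{f}\circ\hat{\rho}=0$ (since $\hat{\pi}$ annihilates $\h$ and $\hat{f}$ annihilates any tuple containing an element of $\h$), so only $\hat{f}\circ\hat{\pi}$ and $\hat{\rho}\circ\hat{f}$ survive; tracking the $(2,k-1)$- and $(k,1)$-shuffle signs gives $(-1)^{k-1}[\pi,f]_\NR=\sum_{i<j}(-1)^{i+j}f([x_i,x_j]_\g,\cdots)$ and $(-1)^{k-1}[\rho,f]_\NR=\sum_i(-1)^{i+1}\rho(x_i)f(\cdots)$, which together are exactly $\dM^{\CE}_{\rho}f$. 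For the twisting term I would apply \eqref{graded-Lie} directly with $m=1,\,n=k$: the $(1,k)$-shuffle with $\sigma(1)=i$ carries sign $(-1)^{i-1}$, whence $\Courant{D,f}=\sum_i(-1)^{k+i}[D(x_i),f(\cdots)]_\h$, and multiplying by $(-1)^{k-1}$ recovers precisely $\sum_i(-1)^{i+1}[D(x_i),f(\cdots)]_\h$. Summing the two identifications yields the claim.

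The main obstacle is the sign bookkeeping, concentrated in the identification $\dM^{\CE}_{\rho}f=(-1)^{k-1}[\pi+\rho,f]_\NR$: one must keep the degree-induced sign $(-1)^{(m-1)(n-1)}$ in the $\NR$ bracket consistent with the shuffle signs from \eqref{eq:fgcirc} and with the global factor $(-1)^{k-1}$. This is the standard translation between Chevalley--Eilenberg cohomology and the graded Lie algebra $(L,[\cdot,\cdot]_\NR)$, and the relevant restricted brackets already appear in the LieAct computation of $(\frkD f)_1$ in Section \ref{sec:rec}; once it is in place, the twisting term is a short, self-contained shuffle computation, and the decomposition of the second paragraph glues the three pieces together.
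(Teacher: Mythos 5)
Your proposal is correct: the splitting $\rho_D=\rho+[D(\cdot),\cdot]_\h$, the identification $\dM^{\CE}_{\rho}f=(-1)^{k-1}[\pi+\rho,f]_\NR$ via the vanishing of $\pi\circ f$ and $f\circ\rho$ (in the Nijenhuis--Richardson insertion convention), and the $(1,k)$-shuffle computation of $\Courant{D,f}$ all check out, including the sign bookkeeping ($(-1)^{k-1}(-1)^{k+i}=(-1)^{i+1}$ for the twisting term and $(-1)^{i+j-3}$ versus $(-1)^{i+j}$ for the bracket term). Note that the paper itself gives no proof of this proposition --- it is quoted from the reference [PSTZ] --- so there is no internal argument to compare against; your term-by-term translation between the Chevalley--Eilenberg complex for $(\h,\rho_D)$ and the graded Lie algebra picture is exactly the expected verification.
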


\section{Deformations and cohomologies of relative difference Lie algebras}\label{sec:MC}

In this section, we study deformations and cohomologies of relative difference Lie algebras. First we recall the higher derived brackets, which will be used  to construct the $L_{\infty}$-algebra that control deformations of relative difference Lie algebras.

\subsection{$L_{\infty}$-algebras and the higher derived brackets}

Let $V=\oplus_{k\in\mathbb Z}V^k$ be a $\mathbb Z$-graded vector space.
 The   desuspension operator  $s^{-1}$ changes the grading of $V$ according to the rule $(s^{-1}V)^i:=V^{i+1}$. The  degree $-1$ map $s^{-1}:V\lon s^{-1}V$ is defined by sending $v\in V$ to its   copy $s^{-1}v\in s^{-1}V$.

\begin{defi}
An {\bf  $L_\infty$-algebra} is a $\mathbb Z$-graded vector space $\g=\oplus_{k\in\mathbb Z}\g^k$ equipped with a collection $(k\ge 1)$ of linear maps $l_k:\otimes^k\g\lon\g$ of degree $1$ with the property that, for any homogeneous elements $x_1,\cdots,x_n\in \g$, we have
\begin{itemize}\item[\rm(i)]
{\em (graded symmetry)} for every $\sigma\in S_{n}$,
\begin{eqnarray*}
l_n(x_{\sigma(1)},\cdots,x_{\sigma(n)})=\varepsilon(\sigma)l_n(x_1,\cdots,x_n),
\end{eqnarray*}
\item[\rm(ii)] {\em (generalized Jacobi identity)} for all $n\ge 1$,
\begin{eqnarray*}\label{sh-Lie}
\sum_{i=1}^{n}\sum_{\sigma\in  S(i,n-i) }\varepsilon(\sigma)l_{n-i+1}(l_i(x_{\sigma(1)},\cdots,x_{\sigma(i)}),x_{\sigma(i+1)},\cdots,x_{\sigma(n)})=0,
\end{eqnarray*}

\end{itemize}where $\varepsilon(\sigma)=\varepsilon(\sigma;v_1,\cdots,v_n)$ is the   Koszul sign for a permutation $\sigma\in S_n$ and $v_1,\cdots, v_n\in V$.
\end{defi}

\begin{defi}
 A Maurer-Cartan element of an $L_\infty$-algebra $(\g,\{l_k\}_{k=1}^{+\infty})$ is an element $\alpha\in \g^0$ satisfying the Maurer-Cartan equation
\begin{eqnarray}\label{MC-equation}
\sum_{k=1}^{+\infty}\frac{1}{k!}l_k(\alpha,\cdots,\alpha)=0.
\end{eqnarray}
\end{defi}

\begin{rmk}
In general, the Maurer-Cartan equation \eqref{MC-equation} makes sense when the $L_\infty$-algebra is a filtered $L_\infty$-algebra \cite{DR}.  In the following,  the $L_\infty$-algebra under consideration satisfies  $l_k=0$ for $k$ sufficiently big, so  the Maurer-Cartan equation makes sense.
\end{rmk}

Let $\alpha$ be a Maurer-Cartan element. Define $l_{k}^{\alpha}:\otimes^{k}\g\lon\g ~~(k\geq1)$ by
\begin{equation}
l_{k}^{\alpha}(x_1,\cdots,x_k)=\sum_{n=0}^{+\infty}\frac{1}{n!}l_{k+n}(\underbrace{\alpha,\cdots,\alpha}_{n},x_1,\cdots,x_k).
\end{equation}

\begin{thm}\label{twistLin}{\rm(\cite{Get})}
  $(\g,\{l_k^{\alpha}\}_{k=1}^{+\infty})$ is an $L_\infty$-algebra, called the twisted $L_\infty$-algebra.
\end{thm}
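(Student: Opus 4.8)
The plan is to pass to the standard dictionary between $L_\infty$-algebras and coderivations. Recall that, with the degree-$1$ graded-symmetric convention used here, an $L_\infty$-structure $\{l_k\}_{k\geq 1}$ on $\g$ is the same datum as a degree-$1$ coderivation $Q=\sum_{k\geq 1}Q_k$ of the reduced cofree cocommutative coalgebra $\overline{\Sym}(\g)=\oplus_{k\geq 1}\Sym^k(\g)$ satisfying $Q^2=0$, where $Q_k$ is the coderivation cogenerated by $l_k$ and $Q^2=0$ encodes exactly the generalized Jacobi identities. The graded symmetry of the $l_k$ is built into the use of symmetric powers and the Koszul sign $\varepsilon(\sigma)$. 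I would prove the theorem by exhibiting $\{l_k^\alpha\}$ as the Taylor coefficients of a conjugate of $Q$, and then reading off the two conditions that make it an $L_\infty$-algebra.

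First I would attach to the degree-$0$ element $\alpha$ the group-like element $e^\alpha=\sum_{n\geq 0}\frac{1}{n!}\alpha^{\cdot n}$ of the completed coalgebra, together with the corresponding coalgebra automorphism $\tau_\alpha$ that translates by $\alpha$ (concretely, $\tau_\alpha$ inserts arbitrarily many copies of $\alpha$ into each word). Since $\alpha$ has even degree, $\tau_\alpha$ is an honest degree-$0$ automorphism. Setting $Q^\alpha:=\tau_\alpha^{-1}\,Q\,\tau_\alpha$, a direct unravelling of the coderivation cogenerated by $Q$ shows that the $k$-th Taylor coefficient of $Q^\alpha$ is precisely $l_k^\alpha(x_1,\cdots,x_k)=\sum_{n\geq 0}\frac{1}{n!}l_{k+n}(\alpha,\cdots,\alpha,x_1,\cdots,x_k)$ for every $k\geq 0$, where the $k=0$ coefficient is the curvature $l_0^\alpha=\sum_{n\geq 0}\frac{1}{n!}l_n(\alpha,\cdots,\alpha)$.

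Because conjugation by an automorphism preserves the property of being a square-zero degree-$1$ coderivation, we get $(Q^\alpha)^2=0$ at once from $Q^2=0$; this single identity packages all the candidate higher Jacobi relations for the family $\{l_k^\alpha\}_{k\geq 0}$. The only remaining point is that $Q^\alpha$ a priori carries a curvature component $l_0^\alpha\colon\K\lon\g$, so that $\{l_k^\alpha\}_{k\geq 0}$ defines in general only a curved $L_\infty$-algebra. Here the Maurer-Cartan hypothesis enters decisively: the Maurer-Cartan equation \eqref{MC-equation} is exactly the statement $l_0^\alpha=0$. Hence the curvature vanishes, $Q^\alpha$ restricts to a genuine coderivation of $\overline{\Sym}(\g)$ with $(Q^\alpha)^2=0$, and translating back through the dictionary shows that $\{l_k^\alpha\}_{k\geq 1}$ satisfies both the generalized Jacobi identities and graded symmetry, i.e. is an $L_\infty$-algebra.

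I expect the main obstacle to be bookkeeping rather than conceptual: one must verify with care that the Taylor coefficients of $\tau_\alpha^{-1}Q\tau_\alpha$ are given, with the correct Koszul signs, by the stated formula, and that the $k=0$ term matches the left-hand side of \eqref{MC-equation} on the nose. An equivalent, more elementary but more laborious route avoids coalgebras entirely: substitute the definition of $l_k^\alpha$ into the generalized Jacobi identity for the twisted maps, expand the nested sums over the number of inserted copies of $\alpha$ in the inner and outer brackets, and regroup the resulting terms according to the total number $N$ of $\alpha$'s; for each fixed $N$ the terms reassemble into the generalized Jacobi identity of the original $\{l_k\}$ evaluated on $(x_1,\cdots,x_k,\alpha,\cdots,\alpha)$, while the terms in which the inner bracket consumes only copies of $\alpha$ collapse using \eqref{MC-equation}. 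The delicate part of this second route is controlling the shuffle sums and signs so that the regrouping is exact.
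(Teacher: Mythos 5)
Your proposal is correct, but note that the paper contains no proof of this theorem at all: it is quoted from Getzler \cite{Get}, so there is no internal argument to compare against. The route you describe --- encode $\{l_k\}_{k\geq 1}$ as a square-zero degree-$1$ coderivation $Q$ of the reduced cofree cocommutative coalgebra $\overline{\Sym}(\g)$ (no suspension is needed, since the paper already uses the degree-$1$ graded-symmetric convention), conjugate by the translation automorphism attached to the group-like element $e^\alpha$, identify the Taylor coefficients of $\tau_\alpha^{-1}Q\tau_\alpha$ with the $l_k^\alpha$, and invoke the Maurer--Cartan hypothesis to kill the curvature coefficient --- is the standard proof of the twisting theorem in the literature, and every step you outline does go through. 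Two points deserve slightly more care than you give them. First, $\tau_\alpha$, its inverse, and the twisted coefficients all involve infinite sums, so the conjugation really takes place on the completed coalgebra and one needs a hypothesis guaranteeing that the sums are finite or convergent; in this paper that role is played by the remark following the definition of Maurer--Cartan elements (here $l_k=0$ for $k$ sufficiently large, and in general one works with filtered $L_\infty$-algebras as in \cite{DR}). Without such a hypothesis the statement does not even parse, so this should be said explicitly. Second, your curvature term should be $l_0^\alpha=\sum_{n\geq 1}\frac{1}{n!}l_n(\alpha,\cdots,\alpha)$, as the untwisted algebra has no $l_0$; with that indexing it is literally the left-hand side of \eqref{MC-equation}, which is what makes the flatness argument close. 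Neither point is a gap in substance.
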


Now we recall the V-data, which is a very powerful tool to construct explicit $L_\infty$-algebras.

\begin{defi}
A $V$-data consists of a quadruple $(L,F,P,\Delta)$, where
\begin{itemize}
\item[$\bullet$] $(L,[\cdot,\cdot])$ is a graded Lie algebra,
\item[$\bullet$] $F$ is an abelian graded Lie subalgebra of $(L,[\cdot,\cdot])$,
\item[$\bullet$] $P:L\lon L$ is a projection, that is $P\circ P=P$, whose image is $F$ and kernel is a  graded Lie subalgebra of $(L,[\cdot,\cdot])$,
\item[$\bullet$] $\Delta$ is an element in $  \ker(P)^1$ such that $[\Delta,\Delta]=0$.
\end{itemize}
\end{defi}

\begin{thm}\label{defili}{\rm (\cite{Vo,Fregier-Zambon-1})}
Let $(L,F,P,\Delta)$ be a $V$-data. Then the graded vector space $s^{-1}L\oplus F$  is an $L_\infty$-algebra where nontrivial products, called the higher derived brackets, are given by
\begin{eqnarray*}\label{V-shla-big-algebra}
l_1(s^{-1}f,\theta)&=&(-s^{-1}[\Delta,f],P(f+[\Delta,\theta])),\\
l_2(s^{-1}f,s^{-1}g)&=&(-1)^fs^{-1}[f,g],\\
l_k(s^{-1}f,\theta_1,\cdots,\theta_{k-1})&=&P[\cdots[[f,\theta_1],\theta_2]\cdots,\theta_{k-1}],\quad k\geq 2,\\
l_k(\theta_1,\cdots,\theta_{k-1},\theta_k)&=&P[\cdots[[\Delta,\theta_1],\theta_2]\cdots,\theta_{k}],\quad k\geq 2.
\end{eqnarray*}
Here $\theta,\theta_1,\cdots,\theta_k$ are homogeneous elements of $F$ and $f,g$ are homogeneous elements of $L$. 

Moreover, if $L'$ is a graded Lie subalgebra of $L$ that satisfies $[\Delta,L']\subset L'$, then $s^{-1}L'\oplus F$ is an $L_\infty$-subalgebra of the above $L_\infty$-algebra $(s^{-1}L\oplus F,\{l_k\}_{k=1}^{+\infty})$.
\end{thm}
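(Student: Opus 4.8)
The plan is to verify directly that the maps $l_k$ satisfy the two defining axioms of an $L_\infty$-algebra — graded symmetry and the generalized Jacobi identity — using only the graded Jacobi identity of $(L,[\cdot,\cdot])$, the abelianness of $F$, the relations $P\circ P=P$ with $\mathrm{im}(P)=F$ and $\ker(P)$ a graded Lie subalgebra containing $\Delta$, and the Maurer--Cartan condition $[\Delta,\Delta]=0$. Equivalently, and perhaps more cleanly, one may package the $l_k$ into a single degree-$1$ coderivation $Q$ of the reduced cofree cocommutative coalgebra on $W:=s^{-1}L\oplus F$ (this matches the degree-$+1$, graded-symmetric convention adopted above for the $l_k$), so that the whole collection of Jacobi identities becomes the single equation $Q^2=0$; below I describe the verification in the hands-on language of the $l_k$.

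Graded symmetry is the routine half. For the brackets $l_k(s^{-1}f,\theta_1,\dots,\theta_{k-1})$ and $l_k(\theta_1,\dots,\theta_k)$, symmetry in the $F$-entries holds because interchanging two adjacent inner arguments inside a nested expression $[\cdots[\,\cdot\,,\theta_i],\theta_{i+1}]\cdots]$ produces, via the graded Jacobi identity of $L$, a correction proportional to $[\theta_i,\theta_{i+1}]$, which vanishes since $F$ is abelian. Symmetry of $l_2(s^{-1}f,s^{-1}g)$ in the shifted convention is exactly the graded antisymmetry of $[\cdot,\cdot]$ together with the Koszul sign produced by the desuspension $s^{-1}$.

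The heart of the proof is the generalized Jacobi identity, which I would organize according to the number $p$ of arguments lying in the $s^{-1}L$ summand, the remaining ones lying in $F$. The unary identity $l_1^2=0$ is a short direct computation that collapses precisely because $[\Delta,[\Delta,-]]=\tfrac12[[\Delta,\Delta],-]=0$ and because $P[\Delta,Pa]=P[\Delta,a]$ for $a\in L$ (using $\Delta,(1-P)a\in\ker(P)$ and $\ker(P)$ a subalgebra). The purely $F$-valued case $p=0$ is the core derived-bracket identity: I would establish the structural fact that the left-hand side of the Jacobi relation for the family $P[\cdots[[\Delta,\theta_1],\theta_2]\cdots,\theta_n]$, summed over shuffles, equals the same expression with $\Delta$ replaced by $\tfrac12[\Delta,\Delta]$, the reduction being driven by the graded Jacobi identity of $L$ together with $\ker(P)$ being a subalgebra containing $\Delta$ (so intermediate terms in $\ker(P)$ are killed by the outer $P$); then $[\Delta,\Delta]=0$ forces vanishing. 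The case $p=1$ follows by the same mechanism, invoking in addition the explicit formula for $l_1$ and the identity $P[\Delta,Pa]=P[\Delta,a]$. For $p\ge 2$, the terms in which $l_2$ fires on two $s^{-1}L$-inputs produce the identities controlling the shifted Lie bracket, which reduce to the graded Jacobi identity of $L$ and to $[\Delta,-]$ being a graded derivation of $[\cdot,\cdot]$.

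Finally, the \emph{moreover} statement is immediate once the axioms are established: every $l_k$ is built solely from the bracket of $L$, the projection $P$ (with image $F$), and the operator $[\Delta,-]$. Hence if $L'$ is a graded Lie subalgebra with $[\Delta,L']\subseteq L'$, then $s^{-1}L'\oplus F$ is closed under every $l_k$ — the $s^{-1}L$-valued outputs $-s^{-1}[\Delta,f]$ and $(-1)^f s^{-1}[f,g]$ remain in $s^{-1}L'$ by the two closure hypotheses, while all other outputs land in $F$ — so it is an $L_\infty$-subalgebra. I expect the main obstacle to be the $p=0$ ``curvature'' identity together with the Koszul-sign bookkeeping across the shuffle sums; once that identity is in hand, the vanishing is forced by $[\Delta,\Delta]=0$, and the mixed cases $p\ge 1$ follow by the same computations with only minor modifications.
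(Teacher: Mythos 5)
The paper itself gives no proof of this theorem: it is quoted as a known result, with the citations \cite{Vo,Fregier-Zambon-1} attached to the statement, so the only meaningful comparison is with the proofs in those sources. Your plan is, in substance, exactly their argument: the repackaging of the $l_k$ as a square-zero coderivation, the elementary identity $P[\Delta,Pa]=P[\Delta,a]$ (valid because $\Delta$ and $(\mathrm{Id}-P)a$ lie in $\ker(P)$ and $\ker(P)$ is a graded Lie subalgebra), the organization of the generalized Jacobi identities by the number $p$ of $s^{-1}L$-inputs, and, above all, the structural fact that the Jacobiators of the brackets $P[\cdots[[\Delta,\theta_1],\theta_2]\cdots,\theta_n]$ equal the corresponding expressions with $\Delta$ replaced by $\tfrac{1}{2}[\Delta,\Delta]$ --- this is precisely Voronov's key lemma, and it is what forces the $p=0$ case to vanish once $[\Delta,\Delta]=0$. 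Your verification of $l_1\circ l_1=0$ is correct as written; your description of the $p\geq 2$ cases is accurate (indeed for $p\geq 4$, and for $p=3$ with at least one $F$-input, every term vanishes for type reasons, so those identities are vacuous and only $p\leq 2$ and the pure Jacobi identity of the shifted bracket at $p=3$ require computation); and your closure argument for the \emph{moreover} statement is complete, since an $L_\infty$-subalgebra is just a graded subspace closed under all the $l_k$, and closure is immediate from $[L',L']\subset L'$, $[\Delta,L']\subset L'$, and the fact that all other outputs land in $F$. The only respect in which your proposal is a sketch rather than a proof is that the central $p=0$ identity (and its $p=1,2$ analogues) is asserted and motivated but not derived: the shuffle and Koszul-sign computation you flag as the main obstacle is where essentially all the work of \cite{Vo,Fregier-Zambon-1} lies. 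The intermediate statement you propose to establish there is true and is the right lemma, so the outline is sound, but a self-contained write-up would have to carry out that computation.
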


\subsection{Deformations  of relative difference Lie algebras}

In this subsection, first we use the higher derived brackets to construct an $L_\infty$-algebra whose Maurer-Cartan elements are relative difference Lie algebras. Then using Getzler's twisting method, we obtain the  $L_\infty$-algebra controlling deformations  of relative difference Lie algebras.

\begin{pro}\label{prodefili}
Let $\g$ and $\h$ be vector spaces. We have a $V$-data $(L, F, P, \Delta)$ as follows:
\begin{itemize}
  \item[$\bullet$] the graded Lie algebra $(L, [\cdot,\cdot])$ is given by $(\oplus_{n=0}^{+\infty}\Hom(\wedge^{n+1}(\g\oplus\h),\g\oplus\h), [\cdot,\cdot]_{\NR})$;
  \item[$\bullet$] the abelian graded Lie subalgebra $F$ is given by $\oplus_{n=0}^{+\infty}\Hom(\wedge^{n+1}\g,\h)$;
  \item[$\bullet$] $P:L\lon L$ is the projection onto the subspace $F$;
  \item[$\bullet$] $\Delta=0$.
\end{itemize}
Consequently, we obtain an $L_{\infty}$-algebra $(s^{-1}L\oplus F, \{l_k\}^{+\infty}_{k=1})$, where $l_{i}$ are given by
\begin{eqnarray*}
l_{1}(s^{-1}f, \theta)&=&P(f),\\
l_{2}(s^{-1}f,s^{-1}g)&=&(-1)^{|f|}s^{-1}[f,g]_{\NR},\\
l_{k}(s^{-1}f,\theta_1,\cdots,\theta_{k-1})&=&P[\cdots[f,\theta_{1}]_{\NR},\cdots,\theta_{k-1}]_{\NR},\quad k\geq 2,
\end{eqnarray*}
for homogeneous elements $\theta, \theta_{1}, \cdots, \theta_{k-1}\in F$, homogeneous elements $f,g\in L$ and all the other possible combinations vanish.
\end{pro}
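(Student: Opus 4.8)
The plan is to verify that the quadruple $(L,F,P,\Delta)$ satisfies the four axioms of a $V$-data and then to read off the products $l_k$ by specializing the higher derived brackets of Theorem~\ref{defili} to $\Delta=0$. That $(L,[\cdot,\cdot]_{\NR})$ is a graded Lie algebra is already recorded at the start of Section~\ref{sec:rec}; that $P\circ P=P$ with $\Img P=F$ is immediate from the definition of the projection onto $F$; and $\Delta=0$ lies in $\Ker(P)^1$ and satisfies $[\Delta,\Delta]_{\NR}=0$ trivially. Hence the real content is to show that $F$ is an abelian graded Lie subalgebra and that $\Ker(P)$ is a graded Lie subalgebra. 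I would establish both at once by introducing an auxiliary integer grading on $L$ that refines the bidegree decomposition $\Hom(\wedge^{n}(\g\oplus\h),\g\oplus\h)\cong(\oplus_{i+j=n}\Hom(\g^{i,j},\g))\oplus(\oplus_{i+j=n}\Hom(\g^{i,j},\h))$ from Proposition~\ref{pro:M}.

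Concretely, for a homogeneous element $\phi$ of pure input bidegree $\g^{i,j}=\wedge^{i}\g\otimes\wedge^{j}\h$ and pure output type, set $c(\phi):=j-b(\phi)$, where $b(\phi)=1$ if $\phi$ takes values in $\h$ and $b(\phi)=0$ if $\phi$ takes values in $\g$; thus $c$ records the number of $\h$-inputs minus one when the output lies in $\h$. Writing $L_{(r)}$ for the span of the elements with $c=r$, the key claim is that $c$ is additive under the composition $\circ$ of lifts: tracking the input bidegree and output type through \eqref{eq:fgcirc} gives $c(\phi\circ\psi)=c(\phi)+c(\psi)$, since substituting the output of $\psi$ into a slot of $\phi$ consumes exactly one input slot, whose type matches the output type of $\psi$, while the output type of the composite is that of $\phi$. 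Consequently $[L_{(p)},L_{(q)}]_{\NR}\subseteq L_{(p+q)}$, so $c$ endows $(L,[\cdot,\cdot]_{\NR})$ with a second compatible grading. Because $j\ge 0$ and $b(\phi)\le 1$, we have $c\ge -1$, with $c(\phi)=-1$ precisely when $j=0$ and the output lies in $\h$; that is, $L_{(-1)}=F$ and $\Ker(P)=\oplus_{r\ge 0}L_{(r)}$.

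From this the two remaining axioms are immediate. Since $[F,F]_{\NR}\subseteq L_{(-2)}=0$ (there is nothing below degree $-1$), $F$ is abelian; and since $p,q\ge 0$ forces $p+q\ge 0$, the subspace $\oplus_{r\ge 0}L_{(r)}=\Ker(P)$ is closed under $[\cdot,\cdot]_{\NR}$, hence a graded Lie subalgebra. This confirms the $V$-data axioms, and Theorem~\ref{defili} produces the $L_\infty$-algebra on $s^{-1}L\oplus F$. Finally I would substitute $\Delta=0$ into its four bracket formulas: the operator $[\Delta,\cdot]_{\NR}$ vanishes identically, so $l_1(s^{-1}f,\theta)=(-s^{-1}[\Delta,f]_{\NR},P(f+[\Delta,\theta]_{\NR}))$ collapses to $P(f)$, the brackets $l_2$ and $l_k(s^{-1}f,\theta_1,\cdots,\theta_{k-1})$ are unchanged, and the brackets $l_k(\theta_1,\cdots,\theta_k)=P[\cdots[[\Delta,\theta_1]_{\NR},\theta_2]_{\NR}\cdots,\theta_k]_{\NR}$ built only from elements of $F$ vanish, yielding exactly the stated list.

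The step I expect to be most delicate is the additivity $c(\phi\circ\psi)=c(\phi)+c(\psi)$: although conceptually transparent, it must be checked against the lift and shuffle conventions of \eqref{eq:fgcirc}, verifying in particular that the only way a composite can land in $F$ (output in $\h$ with no $\h$-inputs) is for one of the two factors to already lie in $F$. Once this bookkeeping is in place, both the abelian condition and the subalgebra condition follow formally from the single inequality $c\ge -1$ together with the identification $L_{(-1)}=F$.
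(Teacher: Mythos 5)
Your proposal is correct and follows essentially the same route as the paper: one checks the $V$-data axioms and then applies Theorem \ref{defili} with $\Delta=0$, under which the higher derived brackets collapse to the stated list. The paper's own proof simply declares that $\ker(P)$ is a Lie subalgebra and cites Theorem \ref{defili}; your auxiliary grading $c(\phi)=j-b(\phi)$, with its additivity under the Nijenhuis--Richardson composition, is a clean and correct way of supplying that verification together with the abelianness of $F$ (both facts the paper leaves as obvious), so it amounts to the same argument with the bookkeeping made explicit.
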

\begin{proof}
 It is obvious that $\ker(P)$ is a Lie subalgebra. Thus $(s^{-1}L\oplus F, \{l_k\}^{+\infty}_{k=1})$ is an $L_{\infty}$-algebra by Theorem \ref{defili}.
\end{proof}
Now we are ready to give the controlling algebra of relative difference Lie algebras. Recall the graded vector space $\huaM$ given in Proposition \ref{pro:M}:
$$\huaM= \oplus_{n=0}^{+\infty}\Hom(\wedge^{n+1}\g,\g)\oplus\Big(\oplus_{i=1}^{n+1}\Hom(\wedge^{n+1-i}\g\otimes\wedge^{i}\h,\h)\Big).$$

\begin{thm}\label{thmmcli}
Let $\g$ and $\h$ be vector spaces. Then $(s^{-1}\huaM\oplus F, \{l_k\}^{+\infty}_{k=1})$ is an $L_\infty$-algebra, which is a subalgebra of the $L_{\infty}$-algebra $(s^{-1}L\oplus F, \{l_k\}^{+\infty}_{k=1})$.

Moreover, for linear maps $\pi\in\Hom(\wedge^2\g,\g), \mu\in\Hom(\wedge^2\h,\h), \rho\in\Hom(\g\otimes\h,\h)$ and $D\in\Hom(\g,\h)$,    $(s^{-1}(\pi+\mu+\rho), D)$ is a Maurer-Cartan element of the $L_\infty$-algebra $s^{-1}\huaM\oplus F$ if and only if $(\pi+\mu+\rho, D)$ is a relative difference Lie algebra structure.
\end{thm}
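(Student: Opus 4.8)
For the first assertion, I would verify that $s^{-1}\huaM \oplus F$ is an $L_\infty$-subalgebra of $s^{-1}L \oplus F$. By the last sentence of Theorem~\ref{defili}, it suffices to check that $\huaM$ is a graded Lie subalgebra of $L$ satisfying $[\Delta, \huaM] \subset \huaM$. Since $\Delta = 0$ here, the second condition is vacuous, so the only thing to verify is that $\huaM$ is closed under $[\cdot,\cdot]_\NR$. But this is exactly the content of Proposition~\ref{pro:M}, which states precisely that $\huaM$ is a graded Lie subalgebra of $L$. Thus the first assertion follows immediately from Proposition~\ref{pro:M} and Theorem~\ref{defili}, together with the observation that $F \subset \huaM$ as graded vector spaces (indeed $\Hom(\wedge^{n+1}\g,\h)$ appears in $\huaM$ as the $i = n+1$ summand, where $\wedge^{n+1-i}\g \otimes \wedge^i\h = \wedge^0\g \otimes \wedge^{n+1}\h$), so the projection $P$ restricts appropriately and the higher derived brackets close on $s^{-1}\huaM \oplus F$.

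For the second assertion, the strategy is to unwind the Maurer-Cartan equation for the element $(s^{-1}(\pi+\mu+\rho), D)$ using the explicit formulas for $\{l_k\}$ from Proposition~\ref{prodefili}, and to match the resulting conditions against the definition of a relative difference Lie algebra structure, namely: (a) $\pi$ is a Lie bracket on $\g$ and $\mu$ is a Lie bracket on $\h$; (b) $\rho$ is an action of $(\g,\pi)$ on $(\h,\mu)$; and (c) $D$ is a relative difference operator with respect to $\rho$. Writing $\Pi = \pi + \mu + \rho$ and $\alpha = (s^{-1}\Pi, D)$, I would compute $\sum_{k\geq 1} \frac{1}{k!} l_k(\alpha, \dots, \alpha) = 0$ degree by degree. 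Because $\Delta = 0$ and only $l_1, l_2$ and the brackets $l_k(s^{-1}f, \theta_1, \dots, \theta_{k-1})$ are nonzero, the Maurer-Cartan equation splits into an $s^{-1}L$-component and an $F$-component. The $s^{-1}L$-component receives contributions from $l_1(s^{-1}\Pi, D) = P(\Pi)$ (which vanishes since $\Pi$ has no $\Hom(\wedge^2\g,\h)$ part, so $P(\Pi)=0$) and from $l_2(s^{-1}\Pi, s^{-1}\Pi) = -s^{-1}[\Pi,\Pi]_\NR$; setting this to zero gives $[\Pi,\Pi]_\NR = 0$, which by Theorem~\ref{thmMC} is exactly the condition that $(\g,\h,\rho)$ is a LieAct triple, i.e.\ conditions (a) and (b). The $F$-component collects the terms $l_k(s^{-1}\Pi, D, \dots, D)$ for $k \geq 2$, which equal $P[\cdots[[\Pi, D]_\NR, D]_\NR \cdots, D]_\NR$ with $D$ appearing $k-1$ times; since $D$ has internal degree corresponding to a $(1,0)$-type map $\g \to \h$, brackets with more than two copies of $D$ land outside the relevant degree or vanish, so this truncates, leaving $\frac{1}{2!} l_2(s^{-1}\Pi, D) + \frac{1}{3!} l_3(s^{-1}\Pi, D, D) = P([\Pi, D]_\NR + \tfrac{1}{3}[[\Pi,D]_\NR, D]_\NR)$ or similar, which I would identify with the relative difference operator equation~\eqref{eq-defich}.

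The main obstacle I expect is the bookkeeping in the $F$-component: correctly determining which higher brackets $l_k(s^{-1}\Pi, D, \dots, D)$ survive, computing the combinatorial prefactors $\frac{1}{k!}$ together with the symmetrization, and verifying that the iterated Nijenhuis-Richardson brackets $P[\cdots[[\Pi, D]_\NR, D]_\NR, \dots]_\NR$ reproduce precisely the three terms $\rho(x)D(y) - \rho(y)D(x) + [D(x),D(y)]_\h$ on the right-hand side of~\eqref{eq-defich} against $D([x,y]_\g)$ on the left. Here the interplay is delicate: $[\Pi, D]_\NR$ contributes both the $D \circ \pi$ term (giving $D([x,y]_\g)$) and the $\rho$-terms $\rho(x)D(y) - \rho(y)D(x)$ after applying $P$, while the double bracket $[[\mu, D]_\NR, D]_\NR$ produces the quadratic term $[D(x),D(y)]_\h$. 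I would organize this by invoking the already-established machinery: by Proposition~\ref{crossed-homo-MC}, relative difference operators are exactly Maurer-Cartan elements of $(\huaC^*(\g,\h), \Courant{\cdot,\cdot}, d_{\pi+\rho})$, whose structure maps are $d_{\pi+\rho}f = [\pi+\rho, f]_\NR$ and $\Courant{f_1,f_2} = (-1)^{|f_1|}[\mu,[f_1,f_2]_\NR]_\NR$; matching the higher derived brackets $l_k(s^{-1}\Pi, D, \dots, D)$ against these established formulas short-circuits the raw computation and confirms that the $F$-component of the Maurer-Cartan equation is precisely $d_{\pi+\rho}D + \frac{1}{2}\Courant{D,D} = 0$, i.e.\ that $D$ is a relative difference operator. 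Combining the two components yields the claimed equivalence.
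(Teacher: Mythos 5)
Your overall plan follows the paper's proof: the first assertion is exactly Proposition \ref{pro:M} plus the last sentence of Theorem \ref{defili} with $\Delta=0$, and the second assertion is obtained by expanding the Maurer--Cartan equation and identifying $[\Pi,\Pi]_\NR=0$ with the LieAct triple condition via Theorem \ref{thmMC}. However, there are two concrete errors. The lesser one: $F\not\subset\huaM$. The $\Hom(\cdot,\h)$ summands of $\huaM$ in \eqref{huaM} all require at least one $\h$-input ($j\geq 1$), while $F=\oplus_n\Hom(\wedge^{n+1}\g,\h)$ has none; the $i=n+1$ summand you point to is $\Hom(\wedge^{n+1}\h,\h)$, not $\Hom(\wedge^{n+1}\g,\h)$. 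In fact $\huaM\subset\ker(P)$, so $\huaM\cap F=0$ --- this disjointness is the whole point of the $V$-data setup. Fortunately Theorem \ref{defili} only requires $\huaM$ to be a graded Lie subalgebra with $[\Delta,\huaM]\subset\huaM$, so your first assertion survives without the erroneous aside.

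The substantive gap is the combinatorics of the $F$-component, which you flag but do not resolve, and your proposed fix cannot resolve it. Since $\alpha=(s^{-1}\Pi,D)$ has degree $0$ and each $l_k$ is graded symmetric, expanding $\frac{1}{k!}l_k(\alpha,\dots,\alpha)$ multilinearly produces a multiplicity $\binom{k}{1}=k$ on the terms containing a single $s^{-1}\Pi$, so the $F$-component is $\sum_{k\geq 2}\frac{1}{(k-1)!}l_k(s^{-1}\Pi,D,\dots,D)=P([\Pi,D]_\NR)+\frac{1}{2}[[\Pi,D]_\NR,D]_\NR$, the series truncating because $[[\Pi,D]_\NR,D]_\NR=[[\mu,D]_\NR,D]_\NR\in\Hom(\wedge^2\g,\h)$ and any element of $\Hom(\wedge^\bullet\g,\h)$ brackets trivially with $D$ (both compositions vanish, as the paper records via $[[[\Pi,D]_\NR,D]_\NR,D]_\NR=0$). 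Your tentative "$\frac{1}{2!}l_2+\frac{1}{3!}l_3$", i.e.\ relative coefficient $\frac{1}{3}$ on the double bracket, would instead yield $D([x,y]_\g)=\rho(x)D(y)-\rho(y)D(x)+\frac{2}{3}[D(x),D(y)]_\h$, which is not \eqref{eq-defich}, so the claimed equivalence would fail. Moreover, the short-circuit via Proposition \ref{crossed-homo-MC} cannot repair this: recognizing the $F$-component as $d_{\pi+\rho}D+\frac{1}{2}\Courant{D,D}$ presupposes that the coefficient is $\frac{1}{2}$, which is precisely the bookkeeping in question. Once the coefficient is computed correctly, your matching idea does work and is a pleasant alternative to the paper's pointwise evaluation --- but note you must use the explicit shuffle formula in the second line of \eqref{graded-Lie}: the first line $[\mu,[f_1,f_2]_\NR]_\NR$, taken literally with $f_1=f_2=D$, is zero since $[D,D]_\NR=0$; the bracket that actually matches is $[[\mu,D]_\NR,D]_\NR$.
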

\begin{proof}
By Proposition \ref{pro:M}, $\huaM$ is a graded Lie subalgebra. Since $\Delta=0,$ it follows that $[\Delta, \huaM]=0$. Thus $(s^{-1}\huaM\oplus F, \{l_k\}^{+\infty}_{k=1})$ is an $L_{\infty}$-algebra by Theorem  \ref{defili}.

It is straightforward to deduce that
\begin{equation*}
[\pi+\rho+\mu, D]_{\NR}\in\Hom(\wedge^2\g, \h)\oplus\Hom(\g\otimes\h, \h),\quad [[\pi+\rho+\mu, D]_{\NR}, D]_{\NR}\in\Hom(\wedge^2\g,\h),
\end{equation*}
and
\begin{equation*}
[[[\pi+\rho+\mu, D]_{\NR}, D]_{\NR}, D]_{\NR}=0.
\end{equation*}
By Proposition \ref{prodefili}, we have
\begin{eqnarray*}
&&\sum_{k=1}^{+\infty}\frac{1}{k!}l_k((s^{-1}(\pi+\rho+\mu),D),\cdots,(s^{-1}(\pi+\rho+\mu),D))\\
&=&\frac{1}{2}l_2(s^{-1}(\pi+\rho+\mu),s^{-1}(\pi+\rho+\mu))+l_{2}(s^{-1}(\pi+\rho+\mu), D)+\frac{1}{2}l_3(s^{-1}(\pi+\rho+\mu),D,D)\\
&=&(\frac{1}{2}s^{-1}[\pi+\rho+\mu,\pi+\rho+\mu]_{\NR},P([\pi+\rho+\mu,D]_{\NR})+\frac{1}{2}[[\pi+\rho+\mu,D]_{\NR},D]_{\NR}).
\end{eqnarray*}
Thus, $(s^{-1}(\pi+\mu+\rho),D)$ is a Maurer-Cartan element if and only if
\begin{eqnarray}
\label{eq:m1}[\pi+\rho+\mu,\pi+\rho+\mu]_{\NR}&=&0,\\
\label{eq:m2} P([\pi+\rho+\mu,D]_{\NR})+\frac{1}{2}[[\pi+\rho+\mu,D]_{\NR},D]_{\NR}&=&0.
\end{eqnarray}
By Theorem \ref{thmMC}, \eqref{eq:m1} is equivalent to that $\pi+\rho+\mu$ is a LieAct triple structure. Moreover, we have
\begin{eqnarray*}
0&=&P([\pi+\rho+\mu,D]_{\NR})(x_1, x_2)+\frac{1}{2}[[\pi+\rho+\mu,D]_{\NR},D]_{\NR}(x_1, x_2)\\
&=&[\pi+\rho, D]_{\NR}(x_1, x_2)+\frac{1}{2}[[\mu, D]_{\NR}, D]_{\NR}(x_1,x_2)\\
&=&-\rho(x_2)D(x_1)+\rho(x_1)D(x_2)-D(\pi(x_1, x_2))+\mu(D(x_1), D(x_2)).
\end{eqnarray*}
Thus, \eqref{eq:m2} is equivalent to that $D$ is a difference operator. Therefore, $(s^{-1}(\pi+\mu+\rho), D)$ is a Maurer-Cartan element of the $L_\infty$-algebra $s^{-1}\huaM\oplus F$ if and only if $(\pi+\mu+\rho, D)$ is a relative difference Lie algebra structure.
\end{proof}

 By Theorem \ref{twistLin}, we obtain the $L_\infty$-algebra that controls deformations  of relative difference Lie algebras.

 \begin{thm}
   Let $(\g,\h,\rho,D)$ be a relative difference Lie algebra. Then   $(s^{-1}\huaM\oplus F, \{l_{k}^{(s^{-1}\Pi,D)}\}_{k=1}^{+\infty})$ is an $L_\infty$-algebra, where $\Pi=\pi+\rho+\mu$. Moreover, let   $\pi'\in\Hom(\wedge^2\g,\g), \mu'\in\Hom(\wedge^2\h,\h), \rho'\in\Hom(\g\otimes\h,\h)$ and $D'\in\Hom(\g,\h)$ be linear maps. Then $\pi+\pi'$ and $\mu+\mu'$ are  Lie algebra structures on $\g$ and $\h$ respectively, $\rho+\rho'$ is   an action of the Lie algebra $(\g,\pi+\pi')$ on the Lie algebra $(\h,\mu+\mu')$, and $D+D'$ is a relative difference operator if and only if  $(s^{-1}(\pi'+\mu'+\rho'), D')$ is a Maurer-Cartan element of the $L_\infty$-algebra   $(s^{-1}\huaM\oplus F, \{l_{k}^{(s^{-1}\Pi,D)}\}_{k=1}^{+\infty})$.

 \end{thm}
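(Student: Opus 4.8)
The plan is to reduce the entire statement to the general twisting principle for $L_\infty$-algebras, combined with the Maurer-Cartan characterization already established in Theorem \ref{thmmcli}. First I would observe that, since $(\g,\h,\rho,D)$ is a relative difference Lie algebra, the pair $(\pi+\mu+\rho, D)$ is a relative difference Lie algebra structure; hence by Theorem \ref{thmmcli} the element $(s^{-1}\Pi, D)$ with $\Pi=\pi+\rho+\mu$ is a genuine Maurer-Cartan element of the $L_\infty$-algebra $(s^{-1}\huaM\oplus F, \{l_k\}_{k=1}^{+\infty})$. Getzler's Theorem \ref{twistLin} then applies verbatim and guarantees that $(s^{-1}\huaM\oplus F, \{l_k^{(s^{-1}\Pi,D)}\}_{k=1}^{+\infty})$ is again an $L_\infty$-algebra, which settles the first assertion.

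Next I would establish the key twisting identity. For a Maurer-Cartan element $\alpha$ of any $L_\infty$-algebra with brackets $\{l_k\}$ and an arbitrary degree-$0$ element $\beta$, I would expand the twisted bracket via its definition $l_k^{\alpha}(x_1,\cdots,x_k)=\sum_{n\geq 0}\frac{1}{n!}l_{k+n}(\alpha,\cdots,\alpha,x_1,\cdots,x_k)$ and regroup the double sum according to the total number $m=n+k$ of arguments. Multilinearity and graded symmetry yield
\begin{eqnarray*}
\sum_{k=1}^{+\infty}\frac{1}{k!}l_k^{\alpha}(\beta,\cdots,\beta)
&=&\sum_{m=1}^{+\infty}\frac{1}{m!}l_m(\alpha+\beta,\cdots,\alpha+\beta)\\
&&-\sum_{m=1}^{+\infty}\frac{1}{m!}l_m(\alpha,\cdots,\alpha).
\end{eqnarray*}
Since both $\alpha$ and $\beta$ sit in degree $0$, no Koszul signs arise when interchanging copies of $\alpha$ and $\beta$, so the binomial identity $\sum_{k=0}^{m}\binom{m}{k}l_m(\alpha,\cdots,\alpha,\beta,\cdots,\beta)=l_m(\alpha+\beta,\cdots,\alpha+\beta)$ holds and produces the displayed formula. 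Because $\alpha$ is Maurer-Cartan the second sum vanishes, whence $\beta$ is a Maurer-Cartan element of the twisted algebra if and only if $\alpha+\beta$ is a Maurer-Cartan element of the untwisted one.

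Finally I would apply this equivalence with $\alpha=(s^{-1}\Pi, D)$ and $\beta=(s^{-1}(\pi'+\mu'+\rho'), D')$. Then $\alpha+\beta=(s^{-1}((\pi+\pi')+(\rho+\rho')+(\mu+\mu')), D+D')$, and invoking Theorem \ref{thmmcli} a second time, this sum is a Maurer-Cartan element of $(s^{-1}\huaM\oplus F, \{l_k\})$ precisely when $((\pi+\pi')+(\mu+\mu')+(\rho+\rho'), D+D')$ is a relative difference Lie algebra structure, that is, exactly when $\pi+\pi'$ and $\mu+\mu'$ are Lie brackets, $\rho+\rho'$ is an action of $(\g,\pi+\pi')$ on $(\h,\mu+\mu')$, and $D+D'$ is the associated relative difference operator. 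This is the claimed equivalence.

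The only genuine obstacle is verifying the general twisting identity with the correct signs; but since Maurer-Cartan elements live in degree $0$, the sign bookkeeping collapses and the identity follows cleanly, after which the statement is a direct translation through Theorem \ref{thmmcli}.
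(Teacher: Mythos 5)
Your proposal is correct and takes essentially the same route as the paper: the first assertion follows from Getzler's twisting theorem, and the second from the equivalence ``$\beta$ is Maurer-Cartan in the twisted algebra iff $\alpha+\beta$ is Maurer-Cartan in the original algebra,'' combined with two applications of Theorem \ref{thmmcli}. The only difference is that you explicitly verify this twisting equivalence via the binomial regrouping (legitimate here since all sums are finite and both elements have degree $0$), whereas the paper simply asserts it with ``which is equivalent to.''
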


 \begin{proof}
   The first conclusion follows from Theorem \ref{twistLin} directly.

  By Theorem \ref{thmmcli},  the quadruple $(\pi+\pi',\mu+\mu',\rho+\rho',D+D')$ is still a relative difference Lie algebra structure  if and only if
  \begin{eqnarray*}
    \sum_{k=1}^{+\infty}\frac{1}{k!}l_k((s^{-1}(\pi+\pi'+\rho+\rho'+\mu+\mu'),D+D'),\cdots,(s^{-1}(\pi+\pi'+\rho+\rho'+\mu+\mu'),D+D'))=0,
  \end{eqnarray*}
 which is equivalent  to
   \begin{eqnarray*}
    \sum_{k=1}^{+\infty}\frac{1}{k!}l_k^{(s^{-1}\Pi,D)}((s^{-1}(\pi'+\rho'+\mu'),D'),\cdots,(s^{-1}(\pi'+\rho'+\mu'),D'))=0,
  \end{eqnarray*}
  i.e.  $(s^{-1}(\pi'+\mu'+\rho'), D')$ is a Maurer-Cartan element of the $L_\infty$-algebra   $(s^{-1}\huaM\oplus F, \{l_{k}^{(s^{-1}\Pi,D)}\}_{k=1}^{+\infty})$.
 \end{proof}

\subsection{Cohomologies of relative difference Lie algebras}
Let $(\g,\h,\rho,D)$ be a relative difference Lie algebra.  Define the space of $1$-cochains $C^{1}(\g,\h, \rho, D)$ to be $\Hom(\g,\g)\oplus\Hom(\h, \h)$. For $n\geq 2$, define the space of $n$-cochains $C^{n}(\g, \h, \rho, D)$ by
\begin{equation}\label{eq:cochain}
C^{n}(\g,\h, \rho, D)=\Big(\Hom(\wedge^{n}\g,\g)\oplus(\oplus_{i=1}^{n}\Hom(\wedge^{n-i}\g\otimes\wedge^{i}\h,\h))\Big)\oplus\Hom(\wedge^{n-1}\g,\h).
\end{equation}

Define the coboundary operator $\delta: C^{n}(\g,\h, \rho, D)\lon C^{n+1}(\g,\h, \rho, D)$ by
\begin{equation}\label{deficohfrkd}
\delta(f,\theta)=(-1)^{n-2}l_{1}^{(s^{-1}\Pi,D)}(s^{-1}f,\theta).
\end{equation}
where $f\in\Hom(\wedge^{n}\g,\g)\oplus(\oplus_{i=1}^{n}\Hom(\wedge^{n-i}\g\otimes\wedge^{i}\h,\h))$ and $\theta\in\Hom(\wedge^{n-1}\g,\h)$.

\begin{thm}
  $(\oplus_{n=1}^{+\infty}C^{n}(\g,\h, \rho, D), \delta)$ is a cochain complex, i.e. $\delta\circ\delta=0$.
\end{thm}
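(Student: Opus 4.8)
The plan is to deduce $\delta\circ\delta=0$ directly from the structure of the twisted $L_\infty$-algebra, bypassing any explicit computation with the higher derived brackets. The key observation is that $\delta$ is, up to a sign and a degree-shift bookkeeping, nothing but the unary bracket $l_1^{(s^{-1}\Pi,D)}$ of that twisted algebra, and the unary bracket of any $L_\infty$-algebra automatically squares to zero.

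First I would record that, since $(\g,\h,\rho,D)$ is a relative difference Lie algebra, Theorem \ref{thmmcli} guarantees that $(s^{-1}\Pi,D)$, with $\Pi=\pi+\rho+\mu$, is a Maurer-Cartan element of $(s^{-1}\huaM\oplus F,\{l_k\})$. Hence Theorem \ref{twistLin} applies and yields that $(s^{-1}\huaM\oplus F,\{l_k^{(s^{-1}\Pi,D)}\})$ is again an $L_\infty$-algebra. The essential input is then the $n=1$ instance of the generalized Jacobi identity, which for any $L_\infty$-algebra reduces to $l_1(l_1(x))=0$; applied to the twisted algebra it gives $l_1^{(s^{-1}\Pi,D)}\circ l_1^{(s^{-1}\Pi,D)}=0$.

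Next I would spell out the degree dictionary that identifies $\delta$ with this squaring-to-zero map. Under the conventions fixed earlier, an element of $\Hom(\wedge^{n}\g,\g)\oplus(\oplus_{i=1}^{n}\Hom(\wedge^{n-i}\g\otimes\wedge^{i}\h,\h))$ has degree $n-1$ in $\huaM$, so its desuspension has degree $n-2$ in $s^{-1}\huaM$, while an element of $\Hom(\wedge^{n-1}\g,\h)$ has degree $n-2$ in $F$. Thus the assignment $(f,\theta)\mapsto(s^{-1}f,\theta)$ identifies $C^{n}(\g,\h,\rho,D)$ with the degree $(n-2)$ component of $s^{-1}\huaM\oplus F$ (for $n=1$ the $F$-summand is empty, matching the separate definition of $C^1$). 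Since $l_1^{(s^{-1}\Pi,D)}$ has degree $1$, it carries this component into the degree $(n-1)$ component, i.e. into $C^{n+1}(\g,\h,\rho,D)$, which confirms that $\delta$ is well-defined as a map $C^{n}\to C^{n+1}$.

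Finally, writing $\delta=(-1)^{n-2}l_1^{(s^{-1}\Pi,D)}$ on $C^{n}$ under this identification, the composite $\delta\circ\delta$ on $C^{n}$ equals $(-1)^{n-1}(-1)^{n-2}(l_1^{(s^{-1}\Pi,D)})^{2}=-(l_1^{(s^{-1}\Pi,D)})^{2}$, which vanishes by the previous paragraph. The signs are therefore irrelevant to the conclusion and serve only to normalize $\delta$ for later comparison with the Chevalley-Eilenberg differential. The only genuine work is the degree bookkeeping; there is no analytic obstacle, since the heavy lifting -- that the derived brackets form an $L_\infty$-algebra and that $(s^{-1}\Pi,D)$ is Maurer-Cartan -- is already supplied by Theorems \ref{twistLin} and \ref{thmmcli}.
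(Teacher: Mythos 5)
Your proposal is correct and follows exactly the paper's own argument: the paper likewise deduces $\delta\circ\delta=0$ from $l_{1}^{(s^{-1}\Pi,D)}\circ l_{1}^{(s^{-1}\Pi,D)}=0$, which holds because $(s^{-1}\huaM\oplus F,\{l_{k}^{(s^{-1}\Pi,D)}\}_{k=1}^{+\infty})$ is an $L_\infty$-algebra. Your additional degree bookkeeping and sign check are sound but only make explicit what the paper leaves implicit.
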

\begin{proof}
Since $(s^{-1}\huaM\oplus F, \{l_{k}^{(s^{-1}\Pi, D)}\}_{k=1}^{+\infty})$ is an $L_{\infty}$-algebra, we have $l_{1}^{(s^{-1}\Pi, D)}\circ l_{1}^{(s^{-1}\Pi, D)}=0$, which implies that $\delta\circ\delta=0$.
\end{proof}

\begin{defi}
The cohomology of the cochain complex $(\oplus_{n=1}^{+\infty}C^{n}(\g, \h, \rho, D), \delta)$ is called the {\bf cohomology of the relative difference Lie algebra} $(\g, \h, \rho, D)$. We denote its $n$-th cohomology group by $\huaH^{n}(\g, \h, \rho, D)$.
\end{defi}

In the sequel we give the explicit formula of the coboundary operator $\delta$.

For $n\geq 1$, $f=(f_0,\cdots,f_i,\cdots,f_n)\in\Hom(\wedge^{n}\g,\g)\oplus(\oplus_{i=1}^{n}\Hom(\wedge^{n-i}\g\otimes\wedge^{i}\h,\h))$ and $\theta\in\Hom(\wedge^{n-1}\g,\h)$, by Proposition \ref{pro:danddT}, we have
\begin{eqnarray*}
\delta(f, \theta)&=&(-1)^{n-2}l_1^{(s^{-1}\Pi, D)}(s^{-1}f, \theta)\\
&=&(-1)^{n-2}\sum_{k=0}^{+\infty}l_{1+k}(\underbrace{(s^{-1}\Pi,D),\cdots,(s^{-1}(\Pi),D)}_{k}, (s^{-1}f, \theta))\\
&=&(-1)^{n-2}l_2((s^{-1}\Pi, D),(s^{-1}f,\theta))\\
&&+(-1)^{n-2}\sum_{k=2}^{+\infty}\frac{1}{k!}l_{1+k}(\underbrace{(s^{-1}\Pi,D),\cdots,(s^{-1}(\Pi),D)}_{k}, (s^{-1}f, \theta))\\
&=&(-1)^{n-2}(l_2(s^{-1}\Pi, s^{-1}f), l_2(s^{-1}\Pi, \theta)+l_2(s^{-1}f, D))+(-1)^{n-2}l_3(s^{-1}\Pi, D, \theta)\\
&&+(-1)^{n-2}\sum_{k=2}^{+\infty}\frac{1}{k!}l_{1+k}(s^{-1}f, \underbrace{D,\cdots,D}_k)\\
&=&(-1)^{n-2}\Big(-s^{-1}[\Pi, f]_{\NR}, P([\Pi, \theta]_{\NR}+[f, D]_{\NR})+P([[\Pi,D]_{\NR},\theta]_{\NR})\\
&&+\sum_{k=2}^{+\infty}\frac{1}{k!}P(\underbrace{[\cdots[ [}_k f,D]_{\NR},D]_{\NR}\cdots D]_{\NR})\Big)\\
&=&(-1)^{n-2}\Big(-s^{-1}[\Pi, f]_{\NR}, P([\Pi, \theta]_{\NR}+[f, D]_{\NR})+P([[\Pi,D]_{\NR},\theta]_{\NR})\\
&&+\sum_{k=2}^{n}\frac{1}{k!}\underbrace{[\cdots[ [}_k f_{k},D]_{\NR},D]_{\NR}\cdots D]_{\NR}\Big)\\
&=&(-1)^{n-2}\Big(-s^{-1}[\Pi, f]_{\NR}, [\pi+\rho, \theta]_{\NR}+[[\mu,D]_{\NR},\theta]_{\NR}\\
&&+[f_0, D]_{\NR}+[f_1, D]_{\NR}+\sum_{k=2}^{n}\frac{1}{k!}\underbrace{[\cdots[ [}_k f_{k},D]_{\NR},D]_{\NR}\cdots D]_{\NR}\Big)\\
&=&(\frkD f, \dM^{\CE}_{\rho_D}(\theta)+T(f)),
\end{eqnarray*}
where $T:\Hom(\wedge^{n}\g,\g)\oplus(\oplus_{i=1}^{n}\Hom(\wedge^{n-i}\g\otimes\wedge^{i}\h,\h)\lon\Hom(\wedge^n\g,\h)$ is defined by
\begin{eqnarray*}
T(f)=(-1)^{n}\Big(-D\circ f_0+\sum_{k=1}^{n}\frac{1}{k!}\underbrace{[\cdots[ [}_k f_{k},D]_{\NR},D]_{\NR}\cdots D]_{\NR}\Big).
\end{eqnarray*}

\begin{lem}
With the above notions, for all $f=(f_0,\cdots,f_n)\in\Hom(\wedge^{n}\g,\g)\oplus(\oplus_{i=1}^{n}\Hom(\wedge^{n-i}\g\otimes\wedge^{i}\h,\h))$, we have
\begin{eqnarray}\label{lemmadefiT}
&&T(f)(x_1,\cdots,x_n)\\
\nonumber&=&(-1)^{n}\Big(\sum_{k=1}^{n}\sum_{\sigma\in S_{(k,n-k)}}\varepsilon(\sigma)f_k(x_1, \cdots, \hat{x_{i_1}},\cdots, \hat{x_{i_k}},\cdots,x_n ,D(x_{i_1}),\cdots,D(x_{i_k}))\\
\nonumber&&-D(f_0(x_1,\cdots,x_n))\Big).
\end{eqnarray}
\end{lem}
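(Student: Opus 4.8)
The plan is to unwind the nested Nijenhuis--Richardson brackets appearing in the definition
\[
T(f)=(-1)^{n}\Big(-D\circ f_0+\sum_{k=1}^{n}\tfrac{1}{k!}\underbrace{[\cdots[[}_{k}f_k,D]_\NR,D]_\NR\cdots D]_\NR\Big)
\]
in the lift formalism, and to match the resulting shuffle sums with the right-hand side of \eqref{lemmadefiT}. Throughout I regard $D\in\Hom(\g,\h)$ through its lift $\hat D$, which has degree $0$, acts by $\hat D(x+u)=(0,D(x))$, and in particular kills $\h$ and satisfies $\hat D\circ\hat D=0$. The first step is to record a single-bracket identity: for a lift $F$ of arity $p$ whose image lies in $\h$, the term $\hat D\circ F$ vanishes (its output lands in $\h$, which $\hat D$ annihilates), so $[F,\hat D]_\NR=F\circ\hat D$, and by \eqref{eq:fgcirc} with the $(1,p-1)$-shuffles,
\[
(F\circ\hat D)(v_1,\dots,v_p)=\sum_{i=1}^{p}(-1)^{i-1}F\big(\hat D(v_i),v_1,\dots,\widehat{v_i},\dots,v_p\big).
\]
Applying this to $f_0\in\Hom(\wedge^n\g,\g)$, whose lift ignores $\h$-inputs, gives $f_0\circ\hat D=0$, hence $[f_0,\hat D]_\NR=-\hat D\circ f_0=-D\circ f_0$; this already accounts for the term $-D(f_0(x_1,\dots,x_n))$.

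Next I treat the $f_k$-terms for $1\le k\le n$. Since each partial bracket $G_j:=\underbrace{[\cdots[f_k,D]_\NR,\dots,D]_\NR}_{j}$ again has image in $\h$, the single-bracket identity yields $G_{j+1}=G_j\circ\hat D$, and I would unwind all $k$ compositions. The key structural point is that $\hat D\circ\hat D=0$ forces the successive insertions of $\hat D$ to occupy pairwise distinct slots; consequently, evaluating $G_k$ on $x_1,\dots,x_n\in\g$ produces a sum over ordered $k$-tuples of \emph{distinct} indices, each summand being $\hat f_k$ with the chosen inputs replaced by the corresponding $D(\cdot)\in\h$ and the rest left in $\g$. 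Because $\hat f_k$ is graded skew-symmetric and the lift already separates the $\g$- and $\h$-arguments, each of the $k!$ orderings of a fixed $k$-subset $\{i_1<\cdots<i_k\}$ contributes the same signed term $\varepsilon(\sigma)\,f_k(x_1,\dots,\widehat{x_{i_1}},\dots,\widehat{x_{i_k}},\dots,x_n,D(x_{i_1}),\dots,D(x_{i_k}))$; this collapses the factor $\tfrac1{k!}$ and produces the sum over increasing shuffles $\sigma\in S_{(k,n-k)}$ with the Koszul sign $\varepsilon(\sigma)$. Summing over $k$ and reinstating the prefactor $(-1)^n$ then gives \eqref{lemmadefiT}.

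I expect the sign bookkeeping to be the main obstacle. I must check that the product of the $(1,p-1)$-shuffle signs accumulated through the $k$ successive compositions, combined with the graded skew-symmetry used to move each $D(x_{i_j})$ into the $\h$-block and to sort the remaining $\g$-arguments into increasing order, reproduces precisely $\varepsilon(\sigma)$ for the associated $(k,n-k)$-shuffle. I would verify this either by induction on $k$, tracking the sign as each new $\hat D$ is inserted, or by comparing directly with the Koszul sign of the permutation carrying $(x_{i_1},\dots,x_{i_k},\text{rest})$ back to $(x_1,\dots,x_n)$. Once this sign identity is established, the remaining steps are routine.
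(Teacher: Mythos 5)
Your proposal is correct, but it takes a genuinely different route from the paper's proof. The paper does not unwind the Nijenhuis--Richardson circle products at all: it passes to the coderivation picture, identifying cochains with coderivations of $\bar{\Sym}^c\big(s^{-1}(\g\oplus\h)\big)$, where the bracket becomes a commutator of \emph{associative} compositions; an induction on $k$ then gives the binomial identity
$\underbrace{[\cdots[[}_k\bar f_k,\bar D]_{\NR},\bar D]_{\NR},\cdots,\bar D]_{\NR}=\sum_{i=0}^{k}(-1)^{i}{k\choose i}\,\bar D^{i}\circ\bar f_k\circ\bar D^{k-i}$,
and evaluating on $(x_1,v_1),\dots,(x_n,v_n)$ produces $k!$ times the shuffle sum, cancelling the $\tfrac{1}{k!}$ in the definition of $T$. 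You instead stay entirely inside $(L,[\cdot,\cdot]_{\NR})$ and kill half of each bracket at the outset: since every partial bracket $G_j$ is $\h$-valued and $\hat D$ annihilates $\h$, you get $[G_j,\hat D]_{\NR}=G_j\circ\hat D$, so the iterated bracket collapses to nested right insertions of $\hat D$; then $\hat D\circ\hat D=0$ forces distinct insertion slots, and skew-symmetry collapses the $k!$ orderings of each $k$-subset. This is a more elementary organization, needing no appeal to the ``well known'' coderivation correspondence, and your vanishing observation in fact explains what is implicit in the paper's evaluation step: all binomial terms with $i\geq 1$ die, because $\bar D$ post-composed with an $\h$-valued composite projects to zero, so both proofs are really computing the single surviving term $\bar f_k\circ\bar D^{k}$. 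What the paper's detour buys is associativity and hence an essentially sign-free induction; what your route costs is exactly the sign bookkeeping you flag, which is routine and does close: for instance, for $n=2$, $k=1$ one computes $[f_1,\hat D]_{\NR}(x_1,x_2)=\hat f_1(D(x_1),x_2)-\hat f_1(D(x_2),x_1)=-f_1(x_2,D(x_1))+f_1(x_1,D(x_2))$, which is precisely the shuffle sum in \eqref{lemmadefiT} once $\varepsilon(\sigma)$ is read as the sign of the permutation placing the differentiated arguments in the $\h$-block, and the induction on $k$ you outline (each insertion contributes the $(1,p-1)$-shuffle sign from \eqref{eq:fgcirc}, and sorting the inserted $D(x_{i_j})$'s into the last slots supplies the remainder of $\varepsilon(\sigma)$) goes through for general $n$ and $k$.
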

\begin{proof}
 It is well known that   elements in $\oplus_{n=1}^{+\infty}C^{n}(\g\oplus \h;\g\oplus \h)$ correspond to coderivations of the coalgebra  $\bar{\Sym}^c\big(s^{-1}(\g\oplus \h)\big)$. The coderivations corresponding to $f_k$ and $D$ will be denoted by $\bar{f_k}$ and $\bar{D}$ respectively.
 Then, by induction, we have
\begin{eqnarray*}
&&\underbrace{[\cdots[[}_kf_k,D]_{\NR},D]_{\NR},\cdots,D]_{\NR}\big((x_1,v_1),\cdots,(x_{n},v_{n})\big)\\
&=&\sum_{i=0}^{k}(-1)^{i}{k\choose i}\big(\underbrace{\bar{D}\circ\cdots\circ\bar{D}}_i\circ \bar{f_k}\circ \underbrace{\bar{D}\cdots\circ \bar{D}}_{k-i}\big)\big((x_1,v_1),\cdots,(x_{n},v_{n})\big)\\
&=&\big(k!\sum_{\sigma\in S(k,n-k)}\varepsilon(\sigma)f_k(x_1, \cdots, \hat{x_{i_1}},\cdots, \hat{x_{i_k}},\cdots,x_n ,D(x_{i_1}),\cdots,D(x_{i_k})),0\big),
\end{eqnarray*}
for $x_i\in\g, v_i\in\h$.
Thus we have
\begin{eqnarray*}
&&T(f)(x_1,\cdots,x_n)\\
&=&(-1)^{n}\Big(\sum_{k=1}^{n}\sum_{\sigma\in S(k,n-k)}\varepsilon(\sigma)f_k(x_1, \cdots, \hat{x_{i_1}},\cdots, \hat{x_{i_k}},\cdots,x_n ,D(x_{i_1}),\cdots,D(x_{i_k}))\\
&&-D(f_0(x_1,\cdots,x_n))\Big),
\end{eqnarray*}
which finishes the proof.
\end{proof}

\begin{rmk}
 A relative difference operator  reduces to a Lie algebra homomorphism if the action is trivial. The cohomology for  relative difference Lie algebras given above reduces to the cohomology for Lie algebra homomorphisms given   in \cite{Fre}.
\end{rmk}


 At the end of this section, we give the relation between various cohomology groups.

\begin{thm}\label{cohomology-exact-CHL}
There is a short exact sequence of the  cochain complexes:
$$
0\longrightarrow(\oplus_{n=1}^{+\infty}C^n( D),\dM^{\CE}_{\rho_D})\stackrel{\iota}{\longrightarrow}(\oplus_{n=1}^{+\infty}C^n(\g,\h,\rho,D), {\delta})\stackrel{p}{\longrightarrow} (\oplus_{n=1}^{+\infty}C^n (\g ,\h,\rho),\frkD)\longrightarrow 0,
$$
where $\iota(\theta)=(0,\theta)$ and $p(f,\theta)=f$ for all $f\in C^n(\g,\h,\rho)$ and $\theta\in \Hom(\wedge^{n-1}\g,\h)$.

Consequently,
there is a long exact sequence of the  cohomology groups:
$$
\cdots\longrightarrow\huaH^n(D)\stackrel{\huaH^n(\iota)}{\longrightarrow}\huaH^n(\g,\h,\rho,D)\stackrel{\huaH^n(p)}{\longrightarrow} \huaH^n(\g,\h,\rho)\stackrel{c^n}\longrightarrow \huaH^{n+1}(D)\longrightarrow\cdots,
$$
where the connecting map $c^n$ is defined by
$c^n([\alpha])=[T(\alpha)],$  for all $[\alpha]\in \huaH^n(\g,\h,\rho).$
\end{thm}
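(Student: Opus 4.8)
The plan is to produce the short exact sequence of cochain complexes first, and then read off the long exact sequence as a formal consequence of the zig-zag (snake) lemma; the only step with genuine content is the identification of the connecting map. Everything rests on the explicit \emph{triangular} form of the coboundary operator computed just above, namely
\[
\delta(f,\theta)=\big(\frkD f,\ \dM^{\CE}_{\rho_D}(\theta)+T(f)\big),
\]
for $f\in C^n(\g,\h,\rho)$ and $\theta\in\Hom(\wedge^{n-1}\g,\h)$, together with the evident direct-sum decomposition $C^n(\g,\h,\rho,D)=C^n(\g,\h,\rho)\oplus\Hom(\wedge^{n-1}\g,\h)$ coming from \eqref{eq:cochain}.

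First I would check that $p$ and $\iota$ are morphisms of cochain complexes. For $p$, surjectivity is immediate from $p(f,0)=f$, and it intertwines the differentials since, by the first component of the displayed formula, $p(\delta(f,\theta))=\frkD f=\frkD(p(f,\theta))$. For $\iota$, injectivity is clear, and using $T(0)=0$ one computes $\delta(\iota(\theta))=\delta(0,\theta)=(0,\dM^{\CE}_{\rho_D}\theta)=\iota(\dM^{\CE}_{\rho_D}\theta)$, so $\iota$ is a chain map. Exactness in the middle is then purely set-theoretic: $\ker p=\{(0,\theta)\}=\Img\,\iota$ by the direct-sum decomposition. This establishes the short exact sequence.

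With the short exact sequence in hand, the long exact sequence is immediate from the zig-zag lemma, and the remaining task is to verify the stated formula for the connecting map $c^n$. Given a class $[\alpha]\in\huaH^n(\g,\h,\rho)$ represented by a cocycle $\alpha$ with $\frkD\alpha=0$, I would lift it along $p$ to $(\alpha,0)$, apply $\delta$, and use the triangular formula together with $\frkD\alpha=0$ to obtain $\delta(\alpha,0)=(0,T(\alpha))=\iota(T(\alpha))$. By the very definition of the connecting homomorphism this gives $c^n([\alpha])=[T(\alpha)]$, as claimed. The well-definedness of this class—that $T(\alpha)$ is a $\dM^{\CE}_{\rho_D}$-cocycle and that its class is independent of the representative—is automatic from the zig-zag lemma, though it is also directly visible from $\delta\circ\delta=0$, which forces the intertwining relation $\dM^{\CE}_{\rho_D}\circ T=-\,T\circ\frkD$.

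Since everything reduces to the two routine verifications above plus a single lift-and-apply computation, there is no serious obstacle. The one point demanding care is the bookkeeping of degrees: the $\theta$-slot of $C^n(\g,\h,\rho,D)$ is $\Hom(\wedge^{n-1}\g,\h)$, so $\iota$ carries the difference-operator complex into the middle term with a degree shift. I would make the indexing in the short exact sequence consistent with this shift, so that the boundary map of the zig-zag lemma is precisely the degree-raising map $c^n$ appearing in the statement and lands $[\alpha]$ on $[T(\alpha)]$ in the asserted group.
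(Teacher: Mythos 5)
Your proposal is correct and follows essentially the same route as the paper: the paper's proof is a one-line appeal to the triangular form $\delta(f,\theta)=(\frkD f,\ \dM^{\CE}_{\rho_D}\theta+T(f))$, from which the short exact sequence and the snake-lemma long exact sequence follow exactly as you spell out. Your identification of the connecting map via the lift $(\alpha,0)\mapsto\delta(\alpha,0)=\iota(T(\alpha))$, and your handling of the degree shift in the $\theta$-slot, fill in precisely the details the paper leaves implicit.
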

\begin{proof}
 By the explicit formula of the coboundary operator $\delta$, we have the short exact sequence  of chain complexes which induces a long exact sequence of cohomology groups.
\end{proof}

\section{Cohomologies of difference Lie algebras and applications}\label{sec:dif}

In this section, we use the above general framework for relative difference Lie algebras to give the cohomology theory for difference Lie algebras. First we introduce the regular cohomology of a difference Lie algebra and classify infinitesimal deformations using the second cohomology group. Then we introduce the cohomology of a difference Lie algebra with coefficients in an arbitrary representation. As applications, we classify abelian extensions using the second cohomology group.

\subsection{Regular cohomologies of difference Lie algebras and infinitesimal deformations}\label{cohomadj}

  Let $(\g, D)$ be a difference Lie algebra. Define the space of $1$-cochains $C^{1}(\g,D)$ to be $\Hom(\g, \g)$. For $n\geq 2$, define the space of $n$-cochains $C^{n}(\g,D)$ by
 $$C^{n}(\g,D)=\Hom(\wedge^n\g,\g)\oplus\Hom(\wedge^{n-1}\g,\g).$$

Define the embedding $\frki:C^n(\g,D)\lon C^n(\g,\g, \ad, D)$ by
$$
\frki(f,\theta)=(\underbrace{f,\cdots,f}_{n+1},\theta),\quad \forall f\in \Hom(\wedge^n\g,\g), \theta\in\Hom(\wedge^{n-1}\g,\g),
$$
where $C^n(\g,\g, \ad, D)$ is the space of $n$-cochains given by \eqref{eq:cochain} for the relative difference Lie algebra $(\g,\g, \ad, D)$. Denote by $\Img^n(\frki)=\frki(C^n(\g,D))$. Then it is straightforward to deduce that
$(\oplus_{n=1}^{+\infty}\Img^n(\frki),\delta)$ is a subcomplex of the cochain complex $(\oplus _{n=1}^{+\infty}C^n(\g,\g,\ad,D),\delta)$.

Define the projection $\frkp:\Img^n(\frki)\lon C^n(\g,D)$ by
$$
\frkp(\underbrace{f,\cdots,f}_{n+1}, \theta)=(f,\theta), \quad \forall f  \in \Hom(\wedge^n\g,\g), \theta\in\Hom(\wedge^{n-1}\g,\g).
$$
Then for $n\geq 1,$ we define $\bar{\delta}:C^n(\g,D)\lon C^{n+1}(\g,D)$ by
$
\bar{\delta}=\frkp\circ \delta\circ \frki.
$

More precisely,
\begin{equation}\label{eq:Dexplicit}
\bar{\delta}(f,\theta)=(\dM^{\CE}_{\ad}f, \dM^{\CE}_{\ad_D}\theta+T(f)),
\end{equation}
for all $f\in\Hom(\wedge^n\g,\g)$ and $\theta\in\Hom(\wedge^{n-1}\g,\g)$, where $\dM^{\CE}_{\ad}:\Hom(\wedge^n\g, \g)\lon\Hom(\wedge^{n+1}\g,\g)$ and $\dM^{\CE}_{\ad_D}$ are the Chevalley-Eilenberg coboundary operators of the Lie algebra $\g$ with coefficients in the adjoint representation $\ad$ and the representation $\ad_D$ respectively. By \eqref{lemmadefiT}, the linear map $T:\Hom(\wedge^{n}\g,\g)\lon\Hom(\wedge^{n}\g,\g)$ is given by
\begin{eqnarray}\label{defitranT}
&&T(f)(x_1,\cdots,x_{n})\\
\nonumber&=&(-1)^{n}\Big(\sum_{k=1}^{n}\sum_{1\leq i_1<\cdots<i_k\leq n}f(x_1,\cdots,x_{{i_1}-1},D(x_{i_1}),\cdots,D(x_{i_k}),\cdots,x_n)-D(f(x_1,\cdots,x_n))\Big),
\end{eqnarray}
for all $x_i\in\g$.

\begin{thm}\label{1}
With the above notations,  $(\oplus _{n=1}^{+\infty}C^n(\g, D), \bar{\delta})$ is a cochain complex, i.e. $\bar{\delta}\circ\bar{\delta}=0$.
\end{thm}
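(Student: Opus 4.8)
The plan is to deduce $\bar\delta\circ\bar\delta=0$ from the already-established relation $\delta\circ\delta=0$ on $\oplus_n C^n(\g,\g,\ad,D)$, by exploiting that $\frki$ and $\frkp$ are mutually inverse between $C^n(\g,D)$ and the subcomplex $\Img^n(\frki)$. Concretely, I would first record the two elementary identities $\frkp\circ\frki=\Id$ on $C^n(\g,D)$ and $\frki\circ\frkp=\Id$ on $\Img^n(\frki)$, both immediate from the defining formulas of the diagonal embedding $\frki(f,\theta)=(f,\cdots,f,\theta)$ and of the projection $\frkp(f,\cdots,f,\theta)=(f,\theta)$.

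Next I would invoke the subcomplex property recorded just above the theorem, namely $\delta\big(\Img^n(\frki)\big)\subseteq\Img^{n+1}(\frki)$. The crucial consequence is that $\delta\circ\frki$ already takes values in $\Img^{n+1}(\frki)$, on which $\frki\circ\frkp$ restricts to the identity; in symbols, $(\frki\circ\frkp)\circ(\delta\circ\frki)=\delta\circ\frki$ as maps $C^n(\g,D)\to C^{n+1}(\g,\g,\ad,D)$. The computation then collapses: expanding
\[
\bar\delta\circ\bar\delta=\frkp\circ\delta\circ\frki\circ\frkp\circ\delta\circ\frki
=\frkp\circ\delta\circ(\frki\circ\frkp)\circ(\delta\circ\frki),
\]
and substituting $(\frki\circ\frkp)\circ(\delta\circ\frki)=\delta\circ\frki$ yields $\bar\delta\circ\bar\delta=\frkp\circ(\delta\circ\delta)\circ\frki=0$. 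Hence $(\oplus_{n=1}^{+\infty}C^n(\g,D),\bar\delta)$ is a cochain complex. In effect this is the standard observation that $\frki$ transports the differential on the subcomplex $\Img^\bullet(\frki)$ to the differential $\bar\delta$, and transported differentials inherit $d^2=0$.

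The one genuinely nonformal ingredient is the subcomplex property $\delta(\Img^n(\frki))\subseteq\Img^{n+1}(\frki)$, i.e. that applying $\delta$ to a diagonal cochain $(f,\cdots,f,\theta)$ returns another diagonal cochain. I expect this to be the main point to verify, and it is where the specific choice of the adjoint action $\ad$ and its twist $\rho_D=\ad_D$ enters: using $\frkD f=(-1)^{n-1}[\Pi,f]_\NR$ together with the explicit description of $T$, one checks that the $\g$-valued components of $\delta(f,\cdots,f,\theta)$ all coincide, matching formula \eqref{eq:Dexplicit}. The excerpt already asserts this as straightforward, so for the present theorem I would simply cite it. Alternatively, one could bypass the retraction argument and verify $\bar\delta\circ\bar\delta=0$ directly from \eqref{eq:Dexplicit}, which reduces to the three identities $(\dM^{\CE}_{\ad})^2=0$, $(\dM^{\CE}_{\ad_D})^2=0$, and the chain-map relation $\dM^{\CE}_{\ad_D}\circ T+T\circ\dM^{\CE}_{\ad}=0$; the last of these is an unpleasant term-by-term computation that the retraction argument neatly avoids, which is why I favour the approach above.
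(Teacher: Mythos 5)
Your proof is correct and is essentially the paper's own argument: the paper likewise uses that $(\oplus_{n=1}^{+\infty}\Img^n(\frki),\delta)$ is a subcomplex and that $\frki\circ\frkp=\Id$ on it, and then collapses $\bar{\delta}\circ\bar{\delta}=\frkp\circ\delta\circ\frki\circ\frkp\circ\delta\circ\frki=\frkp\circ\delta\circ\delta\circ\frki=0$. Your write-up only makes explicit the point the paper leaves implicit, namely that the cancellation of $\frki\circ\frkp$ is legitimate because $\delta\circ\frki$ takes values in $\Img^{n+1}(\frki)$, where $\frki\circ\frkp$ restricts to the identity.
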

\begin{proof}
Since $(\oplus_{n=1}^{+\infty}\Img^n(\frki),\delta)$ is a subcomplex of the cochain complex $(\oplus _{n=1}^{+\infty}C^n(\g,\g,\ad,D),\delta)$ and $\frki\circ\frkp=\Id$, we have
  \begin{eqnarray*}
   \bar{\delta}\circ\bar{\delta}=\frkp\circ \delta\circ \frki\circ \frkp\circ \delta\circ \frki=\frkp\circ \delta\circ   \delta\circ \frki=0,
  \end{eqnarray*}
 which finishes the proof.
\end{proof}

\begin{defi}
Let $(\g, D)$ be a difference Lie algebra. The cohomology of the cochain complex $(\oplus_{n=1}^{+\infty}C^{n}(\g, D), \bar{\delta})$ is taken to be the {\bf regular cohomology of the difference Lie algebra} $(\g, D)$. Denote the $n$-th cohomology group by $\huaH^{n}(\g, D)$.
\end{defi}

In the sequel, we use the second regular cohomology group to classify infinitesimal deformations of difference Lie algebras.

Let $(\g, D)$ be a difference Lie algebra over $\mathbb{R}$ and $\mathbb{R}[t]$ be the polynomial ring in one variable $t$. Then $\mathbb{R}[t]/(t^{2})\otimes_{\mathbb{R}}\g$ is an $\mathbb{R}[t]/(t^{2})$-module. Moreover, $\mathbb{R}[t]/(t^{2})\otimes_{\mathbb{R}}\g$ is a difference Lie algebra over $\mathbb{R}[t]/(t^{2})$, where the Lie bracket $[\cdot,\cdot]$ is defined by
\begin{equation*}
[f(t)x, g(t)y]=f(t)g(t)[x,y]_{\g}, \quad \forall f(t),g(t)\in\mathbb{R}[t]/(t^2), x,y\in\g,
\end{equation*}
and the difference operator $D$ is defined by
\begin{equation*}
D(f(t)x)=f(t)D(x), \quad \forall f(t)\in\mathbb{R}[t]/(t^2), x\in\g.
\end{equation*}

\begin{defi}
Let $(\g, D)$ be a difference Lie algebra and $\hat{\omega}:\wedge^{2}\g\lon\g, ~\hat{D}:\g\lon\g$ be linear maps. If $[\cdot,\cdot]_{t}=[\cdot,\cdot]+t\hat{\omega}$ endow $\mathbb{R}[t]/(t^2)\otimes_{\mathbb{R}}\g$ a Lie algebra structure and $D_{t}=D+t\hat{D}$ is still a difference operator on the Lie algebra $(\mathbb{R}[t]/(t^{2})\otimes_{\mathbb{R}}\g,[\cdot,\cdot]_{t})$, we say that $(\hat{\omega}, \hat{D})$ generates an {\bf infinitesimal deformation} of the difference Lie algebra $(\g, D)$.
\end{defi}

\begin{thm}\label{thmdefH2}
If $(\hat{\omega}, \hat{D})$ generates an infinitesimal deformation of the difference Lie algebra $(\g, D)$, then $(\hat{\omega}, \hat{D})$ is a $2$-cocycle, that is $\bar{\delta}(\hat{\omega}, \hat{D})=0$.
\end{thm}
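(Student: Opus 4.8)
The plan is to read off both slots of $\bar\delta(\hat\omega,\hat D)$ directly from the explicit coboundary formula \eqref{eq:Dexplicit}, and to show that each vanishes by expanding to first order in $t$ the two axioms that make $([\cdot,\cdot]_t,D_t)$ a difference Lie algebra over $\mathbb{R}[t]/(t^2)$. Since $(\hat\omega,\hat D)$ is a $2$-cochain with $\hat\omega\in\Hom(\wedge^2\g,\g)$ and $\hat D\in\Hom(\g,\g)$, \eqref{eq:Dexplicit} gives $\bar\delta(\hat\omega,\hat D)=(\dM^{\CE}_{\ad}\hat\omega,\ \dM^{\CE}_{\ad_D}\hat D+T(\hat\omega))$, so it suffices to prove that the two components vanish separately.

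First I would treat the top slot. Writing out the Jacobi identity for $[\cdot,\cdot]_t=[\cdot,\cdot]+t\hat\omega$ and collecting powers of $t$, the coefficient of $t^0$ is the Jacobi identity for $[\cdot,\cdot]$ and the coefficient of $t^2$ vanishes in $\mathbb{R}[t]/(t^2)$; the coefficient of $t$ is exactly the Chevalley--Eilenberg cocycle condition $\dM^{\CE}_{\ad}\hat\omega=0$. This is the standard infinitesimal-deformation computation for Lie algebras and disposes of the first component of $\bar\delta(\hat\omega,\hat D)$.

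Next I would expand the difference-operator condition for $D_t$ with respect to the simultaneously deformed adjoint action $\ad^t_xy=[x,y]_t$, namely
\[
D_t([x,y]_t)=[x,D_t(y)]_t-[y,D_t(x)]_t+[D_t(x),D_t(y)]_t,
\]
substituting $D_t=D+t\hat D$ and $[\cdot,\cdot]_t=[\cdot,\cdot]+t\hat\omega$. The $t^0$ slice recovers the difference-operator condition for $(\g,D)$, and the coefficient of $t$ yields an identity among $\hat D([x,y])$, $D(\hat\omega(x,y))$ and the cross terms $[x,\hat D(y)]$, $[D(x),\hat D(y)]$, $\hat\omega(x,D(y))$, $\hat\omega(D(x),D(y))$, and so on. The crucial step is to recognize this identity as precisely $-(\dM^{\CE}_{\ad_D}\hat D+T(\hat\omega))=0$: here $\dM^{\CE}_{\ad_D}\hat D(x,y)=[x,\hat D(y)]+[D(x),\hat D(y)]-[y,\hat D(x)]-[D(y),\hat D(x)]-\hat D([x,y])$ is the Chevalley--Eilenberg differential of the $1$-cochain $\hat D$ for the representation $\ad_D$, while $T(\hat\omega)$ is obtained from \eqref{defitranT} at $n=2$ as $T(\hat\omega)(x,y)=\hat\omega(D(x),y)+\hat\omega(x,D(y))+\hat\omega(D(x),D(y))-D(\hat\omega(x,y))$. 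After applying skew-symmetry of $\hat\omega$ and of the bracket, the two expressions coincide term by term, giving $\dM^{\CE}_{\ad_D}\hat D+T(\hat\omega)=0$, the second component of $\bar\delta(\hat\omega,\hat D)$.

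Combining the two computations with \eqref{eq:Dexplicit} then yields $\bar\delta(\hat\omega,\hat D)=(0,0)$, proving Theorem \ref{thmdefH2}. The main obstacle is entirely bookkeeping in the second step: one must keep track of the fact that the deformation twists the bracket both inside $D_t(\cdot)$ and in the adjoint action on the right-hand side, so that every $t$-linear term is collected, and then verify that the signs arising from skew-symmetry match exactly those built into $\dM^{\CE}_{\ad_D}$ and into $T$. No new idea is required beyond the explicit coboundary formula.
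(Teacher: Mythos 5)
Your proposal is correct and follows essentially the same route as the paper: expand the Jacobi identity for $[\cdot,\cdot]_t$ and the difference-operator identity for $D_t$ to first order in $t$, and identify the $t$-linear coefficients with $\dM^{\CE}_{\ad}\hat{\omega}=0$ and $\dM^{\CE}_{\ad_D}\hat{D}+T(\hat{\omega})=0$ via \eqref{eq:Dexplicit} and \eqref{defitranT}. The bookkeeping you describe (tracking the twisted bracket inside $D_t$ and in the adjoint terms, and matching signs with $\ad_D$ and $T$) is exactly the computation carried out in the paper's proof.
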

\begin{proof}
 Since $[\cdot,\cdot]_{t}$ is a Lie algebra, then for any $x, y, z\in\g$, we have
\begin{equation*}
[[x, y]_{t}, z]_{t}+[[y,z]_t,x]_t+[[z,x]_t, y]_t=0,
\end{equation*}
which implies that $\dM^{\CE}_{\ad}(\hat{\omega})=0$.

Moreover, since $D_t=D+t\hat{D}$ is a difference operator,  we have
\begin{eqnarray*}
D_{t}[x, y]_t=[D_{t}(x), y]_t+[x, D_{t}(y)]_t+[D_t(x), D_t(y)]_t,
\end{eqnarray*}
which implies that
\begin{eqnarray*}
&&\dM^{\CE}_{\ad_D}\hat{D}(x,y)+T(\hat{\omega})(x,y)\\
&=&[x, \hat{D}(y)]_\g+[D(x), \hat{D}(y)]_\g+[\hat{D}(x),y]_\g+[\hat{D}(x),D(y)]_\g-\hat{D}([x,y]_\g)\\
&&+\hat{\omega}(D(x),y)+\hat{\omega}(x,D(y))+\hat{\omega}(D(x),D(y))-D(\hat{\omega}(x,y))\\
&=&0.
\end{eqnarray*}
Thus we have $\bar{\delta}(\hat{\omega}, \hat{D})=0$.
\end{proof}

\begin{defi}\label{defidefoequ}
Let $(\g, D)$ be a difference Lie algebra. Two infinitesimal deformations $([\cdot,\cdot]_{t}^{1}, D_t^{1})$ and $([\cdot,\cdot]_{t}^{2}, D_{t}^{2})$ generated by $(\hat{\omega}_1, \hat{D}_1)$ and $(\hat{\omega}_2, \hat{D}_2)$ are said to be {\bf equivalent} if there exists a linear map $N\in\Hom(\g, \g)$ such that the $\mathbb{R}[t]/(t^2)$-module map
$\varphi_t=\Id_\g+tN$
satisfies the following conditions:
\begin{itemize}
\item[\rm(i)] $\varphi_{t}([x, y]_{t}^1)=[\varphi_{t}(x),\varphi_{t}(y)]_{t}^2,\quad \forall x,y\in\g,$
\item[\rm(ii)] $D_{t}^2\circ\varphi_{t}=\varphi_{t}\circ D_{t}^1$ as $\mathbb{R}[t]/(t^2)$-module maps.
\end{itemize}
\end{defi}

\begin{thm}\label{thm:inf-cla}
Let $( \g , D)$ be a difference Lie algebra. The equivalence classes of infinitesimal deformations of $(\g, D)$ are in one-to-one correspondence with the second regular cohomology group $\huaH^{2}(\g, D)$.
\end{thm}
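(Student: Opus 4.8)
The plan is to promote the one-directional statement of Theorem~\ref{thmdefH2} to a bijection between infinitesimal deformations and $2$-cocycles, and then to verify that the equivalence of Definition~\ref{defidefoequ} corresponds exactly to the coboundary relation, so that the bijection descends to $\huaH^2(\g,D)$.

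First I would establish the converse of Theorem~\ref{thmdefH2}: every $2$-cocycle $(\hat\omega,\hat D)$ generates an infinitesimal deformation. Writing $[\cdot,\cdot]_t=[\cdot,\cdot]+t\hat\omega$ and expanding the Jacobi identity modulo $t^2$, the $t^0$-term is the Jacobi identity of $\g$ and the $t^1$-term is exactly $\dM^{\CE}_{\ad}\hat\omega$; likewise, expanding the difference-operator condition for $D_t=D+t\hat D$ modulo $t^2$, the $t^1$-term is $\dM^{\CE}_{\ad_D}\hat D+T(\hat\omega)$, as already computed in the proof of Theorem~\ref{thmdefH2}. Hence $(\hat\omega,\hat D)$ generates an infinitesimal deformation if and only if $\bar{\delta}(\hat\omega,\hat D)=0$, which identifies the set of infinitesimal deformations with the space of $2$-cocycles $Z^2(\g,D)$.

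Next I would translate equivalence into cohomology. Given two deformations generated by $(\hat\omega_1,\hat D_1)$ and $(\hat\omega_2,\hat D_2)$ together with an equivalence $\varphi_t=\Id_\g+tN$, I expand conditions (i) and (ii) of Definition~\ref{defidefoequ} modulo $t^2$ and compare the coefficients of $t$. Condition (i) yields $\hat\omega_1-\hat\omega_2=\dM^{\CE}_{\ad}N$, and condition (ii) yields $\hat D_1-\hat D_2=D\circ N-N\circ D$. Specializing \eqref{defitranT} to $n=1$ gives $T(N)=D\circ N-N\circ D$, so these two identities together read $(\hat\omega_1,\hat D_1)-(\hat\omega_2,\hat D_2)=(\dM^{\CE}_{\ad}N,T(N))=\bar{\delta}(N)$, in agreement with \eqref{eq:Dexplicit} in the degree $n=1$ (where the $\theta$-slot is trivial). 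Since every step is an equivalence of equations in the coefficient of $t$, the argument is reversible: any $N$ with $\bar{\delta}(N)$ equal to the difference of the two cocycles produces, via $\varphi_t=\Id_\g+tN$, an equivalence of the corresponding deformations. Because $N$ ranges over all of $C^1(\g,D)=\Hom(\g,\g)$, the differences realized by equivalences are precisely the coboundaries $B^2(\g,D)=\bar{\delta}(C^1(\g,D))$.

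Combining the two steps, the bijection between infinitesimal deformations and $Z^2(\g,D)$ carries the equivalence relation to the relation of being cohomologous, and therefore descends to a bijection between equivalence classes of infinitesimal deformations and $Z^2(\g,D)/B^2(\g,D)=\huaH^2(\g,D)$. I expect the main obstacle to be purely computational bookkeeping: keeping the signs in the $t^1$-coefficients consistent with the conventions behind $\dM^{\CE}_{\ad}$, $\dM^{\CE}_{\ad_D}$ and $T$, and in particular verifying cleanly that the second component of the equivalence condition, namely $D\circ N-N\circ D$, matches $T(N)$ so that condition (ii) reproduces exactly the second slot of $\bar{\delta}(N)$.
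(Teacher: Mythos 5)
Your proposal is correct and takes essentially the same approach as the paper: both directions hinge on Theorem \ref{thmdefH2} and its converse (read off from the $t^1$-coefficients of the Jacobi and difference-operator identities), together with the translation of conditions (i)--(ii) of Definition \ref{defidefoequ} into the relation $(\hat{\omega}_1,\hat{D}_1)-(\hat{\omega}_2,\hat{D}_2)=\bar{\delta}(N)$, argued reversibly. Your verification that $T(N)=D\circ N-N\circ D$ in degree one matches the paper's formula \eqref{equa-defor}, so the two arguments differ only in bookkeeping order, not in substance.
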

\begin{proof}
Let $([\cdot,\cdot]_{t}^{1}, D_{t}^{1})$ and $([\cdot,\cdot]_{t}^{2}, D_{t}^{2})$ be two equivalent infinitesimal deformations generated by $(\hat{\omega}_1, \hat{D}_1)$ and $(\hat{\omega}_2, \hat{D}_2)$. By Theorem \ref{thmdefH2}, we have $[(\hat{\omega}_1, \hat{D}_1)]\in\huaH^{2}(\g, D)$ and $[(\hat{\omega}_2, \hat{D}_2)]\in\huaH^{2}(\g, D)$. Let $\varphi_{t}=\Id_\g+tN$ satisfy (i) and (ii) in Definition \ref{defidefoequ}. Since $\varphi_{t}([x, y]_{t}^{1})=[\varphi_{t}(x),\varphi_{t}(y)]_{t}^{2}$, then
\begin{equation}\label{eqeqdefor}
\hat{\omega}_1(x, y)-\hat{\omega}_2(x,y)=[N(x), y]_\g-[N(y), x]_\g-N([x, y]_\g),
\end{equation}
that is $\hat{\omega}_1-\hat{\omega}_2=\dM_{\ad}^{\CE}N$.

 Moreover, we have
\begin{equation*}
(\Id_\g+tN)(D+t\hat{D}_1)(x)=(D+t\hat{D}_2)(\Id_\g+tN)(x), \quad \forall x\in\g,
\end{equation*}
which implies that
\begin{equation}
\label{equa-defor}\hat{D}_1(x)-\hat{D}_2(x)=D(N(x))-N(D(x)),\quad \forall x\in\g.
\end{equation}

By \eqref{eqeqdefor} and \eqref{equa-defor}, we have
\begin{equation*}
(\hat{\omega}_1, \hat{D}_1)-(\hat{\omega}_2, \hat{D}_2)=\bar{\delta}(N),
\end{equation*}
which implies that $[(\hat{\omega}_1, \hat{D}_1)]=[(\hat{\omega}_2, \hat{D}_2)]$.

Conversely, suppose that $[(\hat{\omega}_1, \hat{D}_1)]\in\huaH^{2}(\g, D)$. Define $[\cdot, \cdot]_{t}^{1}$ and $D_{t}^{1}$ by
\begin{equation*}
[x,y]_{t}^{1}=[x,y]_\g+t\hat{\omega}_1(x,y), \quad D_{t}^{1}(x)=D(x)+t\hat{D}_1(x),\quad \forall x,y \in\g.
\end{equation*}
By $\dM_{\ad}^{\CE}\hat{\omega}_1=0$ and $\dM_{\ad_D}^{\CE}\hat{D}_1+T(\hat{\omega}_1)=0$,   $([\cdot,\cdot]_{t}^{1}, D_{t}^{1})$ is an infinitesimal deformation of $(\g, D)$.

If $[(\hat{\omega}_2, \hat{D}_2)]=[(\hat{\omega}_1, \hat{D}_1)]$, then there is an infinitesimal deformation generated by $(\hat{\omega}_2, \hat{D}_2)$ and there exists a linear map $N:\g\lon\g$ such that $\hat{\omega}_1-\hat{\omega}_2=\dM_{\ad}^{\CE}N$ and $\hat{D}_1-\hat{D}_2=T(N)$. Then we have
\begin{equation*}
(\Id_\g+tN)([x,y]_t^1)=[(\Id_\g+tN)x, (\Id_\g+tN)y]_t^{2},
\end{equation*}
and
\begin{equation*}
(\Id_\g+tN)(D+t\hat{D}_1)(x)=(D+t\hat{D}_2)(\Id_\g+tN)(x), \quad \forall x\in\g,
\end{equation*}
which implies that two infinitesimal deformations $([\cdot,\cdot]_{t}^{1}, D_{t}^{1})$ and $([\cdot,\cdot]_{t}^2, D_t^2)$ are equivalent.
\end{proof}

\subsection{Classification of abelian extensions of difference Lie algebras}

 In this subsection, first we introduce the notion of a representation of a difference Lie algebra and develop a cohomology theory of a difference Lie algebra with coefficients in an arbitrary representation. Finally we classify abelian extensions of difference Lie algebras.
\begin{defi}
A {\bf representation of a difference Lie algebra} $(\g, D)$ on a vector space $V$ with respect to a linear map $K: V\lon V$ is a representation $\varrho:\g\rightarrow\gl(V)$ of the Lie algebra $\g$ on the vector space $V$ such that the following equation is satisfied:
\begin{equation}\label{defirepcrohomor}
K(\varrho(x)u)=\varrho(D(x))u+\varrho(x)K(u)+\varrho(D(x))K(u), \quad\forall x\in\g, u\in V.
\end{equation}
\end{defi}
We denote a representation by $(V,\varrho,K)$.

\begin{ex}
Let $(\g,D)$ be a difference Lie algebra. Then $(\g, \ad, D)$ is a representation, which is called the {\bf adjoint representation of the difference Lie algebra} $(\g, D)$, where $\ad$ is the adjoint representation of the Lie algebra $\g$.
\end{ex}

\begin{pro}\label{semiRB}
Let $(V, \varrho, K)$ be a representation of a difference Lie algebra $(\g, D)$. Then $(\g\oplus V, [\cdot,\cdot]_{\ltimes}, D+K)$ is a difference Lie algebra, where $[\cdot,\cdot]_{\ltimes}$ is the semidirect product Lie bracket given by
\begin{equation*}
[x+u,y+v]_{\ltimes}=[x,y]_\g+\varrho(x)v-\varrho(y)u,\quad \forall x, y\in\g, u,v\in V,
\end{equation*}
and the difference operator $D+K: \g\oplus V\rightarrow\g\oplus V$ is given by
\begin{equation*}
(D+K)(x+u)=D(x)+K(u).
\end{equation*}
This difference Lie algebra is called the {\bf semi-direct product} of $(\g,  D)$ and the representation $(V, \varrho, K)$, and denoted by $(\g\ltimes_{\varrho}V, D+K)$.
\end{pro}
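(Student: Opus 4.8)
The plan is to verify the two requirements of a difference Lie algebra separately: that $[\cdot,\cdot]_\ltimes$ is a Lie bracket on $\g\oplus V$, and that $D+K$ is a difference operator for it, i.e. that it satisfies \eqref{eq-defich} with respect to the adjoint action of $(\g\oplus V,[\cdot,\cdot]_\ltimes)$.

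For the bracket, I would observe that $[\cdot,\cdot]_\ltimes$ is exactly the classical semidirect product bracket attached to the representation $\varrho:\g\to\gl(V)$. Skew-symmetry is immediate from the skew-symmetry of $[\cdot,\cdot]_\g$, and the Jacobi identity splits into the Jacobi identity of $\g$ together with the condition $\varrho([x,y]_\g)=\varrho(x)\varrho(y)-\varrho(y)\varrho(x)$, which holds because $\varrho$ is a representation of the Lie algebra $\g$. This step uses none of the difference structure and I would simply invoke the standard semidirect product construction.

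The substance is to check that $\mathcal{D}:=D+K$ satisfies the difference operator identity. Writing out
\[
\mathcal{D}([x+u,y+v]_\ltimes)=[\mathcal{D}(x+u),y+v]_\ltimes+[x+u,\mathcal{D}(y+v)]_\ltimes+[\mathcal{D}(x+u),\mathcal{D}(y+v)]_\ltimes,
\]
I would expand both sides and compare the $\g$-components and $V$-components. The $\g$-component reduces to $D([x,y]_\g)=[D(x),y]_\g+[x,D(y)]_\g+[D(x),D(y)]_\g$, which is precisely the hypothesis that $D$ is a difference operator on $\g$. For the $V$-component, the right-hand side yields the six terms $\varrho(D(x))v+\varrho(x)K(v)+\varrho(D(x))K(v)$ and $-\varrho(D(y))u-\varrho(y)K(u)-\varrho(D(y))K(u)$, while the left-hand side is $K(\varrho(x)v)-K(\varrho(y)u)$. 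The key step is that the defining relation \eqref{defirepcrohomor} of a representation, applied to the pairs $(x,v)$ and $(y,u)$, rewrites $K(\varrho(x)v)$ and $K(\varrho(y)u)$ as exactly these two triples of terms, so the two sides agree termwise.

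I do not anticipate a genuine obstacle; the argument is a direct, if slightly lengthy, bookkeeping of terms, and the only care needed is in the $V$-component. The conceptual point worth highlighting is that the three summands on the right of \eqref{defirepcrohomor} are precisely the $\varrho$-images of the three cross terms $[\mathcal{D}(x+u),y+v]_\ltimes$, $[x+u,\mathcal{D}(y+v)]_\ltimes$ and $[\mathcal{D}(x+u),\mathcal{D}(y+v)]_\ltimes$ of the difference operator equation restricted to $V$; in other words, the definition of a representation of a difference Lie algebra is exactly reverse-engineered so that this semidirect product is again a difference Lie algebra, in complete analogy with the ordinary Lie algebra case.
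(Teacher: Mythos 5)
Your proposal is correct and follows essentially the same route as the paper's proof: the Lie algebra structure is the standard semidirect product, and the difference operator identity for $D+K$ is verified by expanding both sides, with the $\g$-component reducing to the hypothesis on $D$ and the $V$-component matching exactly via the defining relation \eqref{defirepcrohomor}. Your closing remark that \eqref{defirepcrohomor} is reverse-engineered precisely to make this semidirect product work is a nice conceptual observation, but the computation itself is the one in the paper.
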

\begin{proof}
Since $\varrho$ is a representation of $\g$ on $V$, it is obvious that  $(\g\oplus V, [\cdot,\cdot]_{\ltimes})$ is a Lie algebra.

By \eqref{defirepcrohomor}, we obtain
\begin{eqnarray*}
&&[(D+K)(x+u), y+v]_{\ltimes}+[x+u, (D+K)(y+v)]_{\ltimes}+[(D+K)(x+u), (D+K)(y+v)]_{\ltimes}\\
&=&[D(x), y]_\g+\varrho(D(x))v-\varrho(y)K(u)+[x, D(y)]_\g+\varrho(x)K(v)-\varrho(D(y))u\\
&&+[D(x), D(y)]_\g+\varrho(D(x))K(v)-\varrho(D(y))K(u)\\
&=&[D(x), y]_\g+[x, D(y)]_\g+[D(x), D(y)]_\g+K(\varrho(x)v)-K(\varrho(y)u)\\
&=&D([x, y]_\g)+K(\varrho(x)v-\varrho(y)u)\\
&=&(D+K)([x+u, y+v]_{\ltimes}),
\end{eqnarray*}
which implies that $D+K$ is a difference operator. Thus $(\g\ltimes_{\varrho}V, D+K)$ is a difference Lie algebra.
\end{proof}

Now we introduce a cohomology theory of difference Lie algebras with coefficients in  arbitrary representations.

Let $(V, \varrho, K)$ be a representation of $(\g, D)$.  Define the space of $1$-cochains $\frkC^1(\g, D; V, \varrho, K)$ to be $\Hom(\g,V)$. For $n\geq 2$, define the space of $n$-cochains $\frkC^n(\g, D; V, \varrho, K)$  by
$$
\frkC^n(\g,D; V, \varrho, K)=\Hom(\wedge^{n}\g,V)\oplus \Hom(\wedge^{n-1}\g ,V).
$$
Define the coboundary operator
\begin{equation*}
\delta_{\varrho}:\frkC^n(\g,D; V, \varrho, K)\lon \frkC^{n+1}(\g,D; V, \varrho, K)
\end{equation*}
by
\begin{equation*}
\delta_{\varrho}(f,\theta)=(\dM^{\CE}_{\varrho} f,\partial\theta+T(f)),
\end{equation*}
for all $f\in\Hom(\wedge^{n}\g,V), \theta\in\Hom(\wedge^{n-1}\g,V)$,
where $\dM^{\CE}_{\varrho}$ is the Chevalley-Eilenberg coboundary operator of the Lie algebra $\g$ with coefficients in the representation $(V,\varrho)$, and
\begin{itemize}
\item $\partial:\Hom(\wedge^{n-1}\g,V)\lon \Hom(\wedge^{n}\g,V)$ is defined by
\begin{eqnarray*}
&&\partial\theta(x_{1},\cdots,x_{n})\\
&=&\sum_{i=1}^{n}(-1)^{i+1}\varrho(x_i)\theta(x_1,\cdots,\hat{x_i},\cdots,x_{n})+\sum_{i=1}^{n}(-1)^{i+1}\varrho(D(x_i))\theta(x_1,\cdots,\hat{x_i},\cdots,x_{n})\\
\nonumber&&+\sum_{i<j}(-1)^{i+j}\theta([x_i,x_j]_{\g},x_1,\cdots,\hat{x_i},\cdots,\hat{x_j},\cdots,x_{n}).
\end{eqnarray*}
\item $T: \Hom(\wedge^{n}\g,V)\rightarrow\Hom(\wedge^{n}\g, V)$ is defined by
\begin{eqnarray*}
&&T(f)(x_1,\cdots,x_{n})\\
\nonumber&=&(-1)^{n}\Big(\sum_{k=1}^{n}\sum_{1\leq i_1<\cdots<i_k\leq n}f(x_1,\cdots,x_{i-1},D(x_{i_1}),\cdots,D(x_{i_k}),\cdots,x_n)-K(f(x_1,\cdots,x_{n}))\Big).
\end{eqnarray*}

 \end{itemize}

\begin{thm}\label{cohomology-of-LLT}
  With the above notations,  $(\oplus _{n=1}^{+\infty}\frkC^n(\g, D; V,\varrho, K),\delta_{\varrho})$ is a cochain complex, i.e. $$\delta_{\varrho}\circ \delta_{\varrho}=0.$$
\end{thm}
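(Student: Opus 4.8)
The plan is to realize the complex $(\oplus_{n\geq1}\frkC^n(\g,D;V,\varrho,K),\delta_{\varrho})$ as a subcomplex of the regular cohomology complex of an auxiliary difference Lie algebra, and then read off $\delta_{\varrho}\circ\delta_{\varrho}=0$ from the already-established identity $\bar{\delta}\circ\bar{\delta}=0$ of Theorem \ref{1}. By Proposition \ref{semiRB}, the representation $(V,\varrho,K)$ produces the difference Lie algebra $(\hat{\g},\hat{D}):=(\g\ltimes_{\varrho}V,D+K)$, whose regular complex $(\oplus_{n\geq1}C^n(\hat{\g},\hat{D}),\bar{\delta})$ satisfies $\bar{\delta}\circ\bar{\delta}=0$. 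The guiding idea is that a $V$-valued cochain on $\g$ is naturally a $\hat{\g}$-valued cochain on $\hat{\g}$ taking values in the abelian ideal $V$, and the adjoint-type differential on $\hat{\g}$ then restricts to the representation-type differential $\delta_{\varrho}$.

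Concretely, I would define an embedding $J:\frkC^n(\g,D;V,\varrho,K)\lon C^n(\hat{\g},\hat{D})$: given $f\in\Hom(\wedge^n\g,V)$ and $\theta\in\Hom(\wedge^{n-1}\g,V)$, let $\hat{f}\in\Hom(\wedge^n\hat{\g},\hat{\g})$ and $\hat{\theta}\in\Hom(\wedge^{n-1}\hat{\g},\hat{\g})$ be the maps that agree with $f,\theta$ (viewed as valued in $V\subset\hat{\g}$) on arguments from $\g$ and vanish whenever any argument lies in $V$, and set $J(f,\theta)=(\hat{f},\hat{\theta})$. First I would check that $\Img(J)$ is a subcomplex, i.e. that $\bar{\delta}(\hat{f},\hat{\theta})$ again vanishes on $V$-arguments and is valued in $V$. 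This is immediate because $V$ is an abelian ideal of $\hat{\g}$, so $[x,v]_{\ltimes}=\varrho(x)v$ stays in $V$ while $[v,w]_{\ltimes}=0$, and $\hat{D}$ preserves $\g$ and $V$ separately.

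The core of the argument is that $J$ intertwines the differentials, $J\circ\delta_{\varrho}=\bar{\delta}\circ J$, which reduces to three matchings. First, $\dM^{\CE}_{\ad}$ of $\hat{\g}$ applied to $\hat{f}$ reproduces $\dM^{\CE}_{\varrho}f$, since on $\g$-arguments $\ad_x$ acts on $V$ as $\varrho(x)$ and $[x,y]_{\ltimes}=[x,y]_{\g}$. Second, $\dM^{\CE}_{\ad_{\hat{D}}}$ applied to $\hat{\theta}$ reproduces $\partial\theta$: indeed $\ad_{\hat{D}}(x)u=[x,u]_{\ltimes}+[\hat{D}(x),u]_{\ltimes}=\varrho(x)u+\varrho(D(x))u$, so $\dM^{\CE}_{\ad_{\hat{D}}}$ restricts to the Chevalley--Eilenberg operator for the map $x\mapsto\varrho(x)+\varrho(D(x))$, which is exactly $\partial$; here one uses that this map is a genuine representation of $\g$, a fact that follows from the difference operator identity $D([x,y]_{\g})=[D(x),y]_{\g}+[x,D(y)]_{\g}+[D(x),D(y)]_{\g}$ together with $\varrho$ being a representation. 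Third, the transformation $T$ attached to $(\hat{\g},\hat{D})$ in \eqref{defitranT} restricts to the $T$ attached to $(V,\varrho,K)$: on $\g$-arguments $\hat{D}(x_{i_j})=D(x_{i_j})\in\g$, so the first sum reproduces $f(x_1,\cdots,D(x_{i_1}),\cdots,D(x_{i_k}),\cdots,x_n)$, while $\hat{f}(x_1,\cdots,x_n)\in V$ gives $\hat{D}(\hat{f}(\cdots))=K(f(\cdots))$ because $\hat{D}|_V=K$.

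Granting these matchings, $J$ is an injective chain map, whence $J\circ\delta_{\varrho}\circ\delta_{\varrho}=\bar{\delta}\circ\bar{\delta}\circ J=0$ and injectivity forces $\delta_{\varrho}\circ\delta_{\varrho}=0$. The main obstacle is bookkeeping rather than conceptual difficulty: one must carefully confirm the second and third matchings, in particular that $x\mapsto\varrho(x)+\varrho(D(x))$ is a representation (the only place the difference operator condition on $D$ enters) and that $\hat{D}$ splits as $D$ on $\g$ and $K$ on $V$ so that $T$ transports correctly. A purely computational alternative would bypass the embedding: verify $(\dM^{\CE}_{\varrho})^2=0$ and $\partial^2=0$ directly (the latter again using that $\varrho+\varrho\circ D$ is a representation), and then establish the single intertwining identity $\partial\circ T+T\circ\dM^{\CE}_{\varrho}=0$ by expanding both sides on $(x_1,\cdots,x_{n+1})$; this identity is the genuine computational heart of either approach.
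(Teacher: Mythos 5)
Your proposal is correct and takes essentially the same route as the paper: the paper's (sketched) proof likewise regards $(\oplus_{n=1}^{+\infty}\frkC^n(\g,D;V,\varrho,K),\delta_{\varrho})$ as a subcomplex of the regular cochain complex $(\oplus_{n=1}^{+\infty}C^n(\g\oplus V,D+K),\bar{\delta})$ of the semi-direct product difference Lie algebra from Proposition \ref{semiRB}, and deduces $\delta_{\varrho}\circ\delta_{\varrho}=0$ from $\bar{\delta}\circ\bar{\delta}=0$ of Theorem \ref{1}. Your explicit embedding $J$ and the three matchings (with $\dM^{\CE}_{\varrho}$, $\partial$, and $T$) are exactly the details the paper declares ``straightforward'' and leaves to the reader.
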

\begin{proof}
We only give a sketch of the proof and leave details to readers. Consider the semi-direct product difference Lie algebra $ (\g\ltimes_{\varrho}V, D+K)$ given in Proposition \ref{semiRB}, and the associated cochain complex $(\oplus_{n=1}^{+\infty}C^n(\g\oplus V, D+K),\bar{\delta})$ given in Theorem \ref{1}. It is straightforward to deduce that
  $(\oplus _{n=1}^{+\infty}\frkC^n(\g, D; V, \varrho, K),\delta_{\varrho})$ is a subcomplex of $(\oplus_{n=1}^{+\infty}\frkC^n(\g\oplus\h, D+K),\bar{\delta})$. Thus, $\delta_{\varrho}\circ \delta_{\varrho}=0.$
\end{proof}

\begin{defi}
  The  cohomology of the cochain complex  $(\oplus_{n=1}^{+\infty}\frkC^n(\g,D; V,\varrho,K),\delta_{\varrho})$ is called {\bf the  cohomology of the difference Lie algebra} with coefficients in the representation $(V, \varrho, K)$. The corresponding $n$-th cohomology group is denoted by $\huaH^n(\g, D; V, \varrho, K)$.
\end{defi}

In the sequel, we use the second   cohomology groups of  difference Lie algebras with coefficients in arbitrary representations to classify abelian extensions of difference Lie algebras.

\begin{defi}
Let $(\g,D)$ and $(\h,K)$ be two difference Lie algebras. An {\bf extension} of $(\g,D)$ by $(\h,K)$ is a short exact sequence of difference Lie algebra homomorphisms:
\[\begin{CD}
0@>>>\h@>i>>\hat{\g}@>p>>\g            @>>>0\\
@.    @V K VV   @V\hat{D}VV  @V D VV    @.\\
0@>>>\h @>i>>\hat{\g}@>p>>\g             @>>>0
,
\end{CD}\]
where $(\hat{\g}, \hat{D})$ is a difference Lie algebra.

An extension of $(\g, D)$ by $(\h, K)$ is called {\bf abelian} if $\h$ is an abelian Lie algebra.
\end{defi}
\begin{defi}
A {\bf section} of an extension $(\hat{\g}, \hat{D})$ of a difference Lie algebra $(\g, D)$ by $(\h, K)$ is a linear map $ s:\g\rightarrow\hat{\g}$ such that
\begin{equation*}
p\circ s=\Id.
\end{equation*}
\end{defi}

In the sequel, we only consider abelian extensions. Let $s$ be a section. Define a linear map $\varrho:\g\rightarrow\gl(\h)$ by
\begin{equation*}
\varrho(x)u=[s(x),u]_{\hat{\g}},\quad \forall x\in\g, u\in\h.
\end{equation*}
Then we have the following result.

\begin{pro}\label{without section}
With the above notations, the linear map $\varrho:\g\lon\gl(\h)$ is a representation of the difference Lie algebra $(\g, D)$ on the vector space $\h$ with respect to the linear map $K$.
Moreover, this representation is independent on the choice of sections.
\end{pro}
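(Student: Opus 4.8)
The plan is to identify $\h$ with the ideal $i(\h)=\Ker p\subseteq\hat{\g}$. Since the extension is abelian, this ideal satisfies $[u,v]_{\hat{\g}}=0$ for all $u,v\in\h$, and the commuting square in the definition of an extension forces $\hat{D}\circ i=i\circ K$, i.e. $\hat{D}$ restricts to $K$ on $\h$. The two elementary facts I would exploit throughout are: (a) any two lifts along $p$ of the same element of $\g$ differ by an element of $\Ker p=\h$; and (b) the $\hat{\g}$-bracket of an element of $\h$ against another element of $\h$ vanishes. As a first step, these give well-definedness of $\varrho$: applying $p$ to $\varrho(x)u=[s(x),u]_{\hat{\g}}$ yields $[x,0]_{\g}=0$, so the bracket indeed lands in $\Ker p=\h$.

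To show that $\varrho$ is a representation of the Lie algebra $\g$, I would write $s([x,y]_{\g})=[s(x),s(y)]_{\hat{\g}}+\omega(x,y)$ with $\omega(x,y)\in\h$, which is legitimate by (a) since both sides project to $[x,y]_{\g}$ under $p$. Then $\varrho([x,y]_{\g})u=[[s(x),s(y)]_{\hat{\g}},u]_{\hat{\g}}+[\omega(x,y),u]_{\hat{\g}}$; the second summand vanishes by (b), and the Jacobi identity in $\hat{\g}$ rewrites the first as $\varrho(x)\varrho(y)u-\varrho(y)\varrho(x)u$, which is exactly the representation property.

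The heart of the matter is the difference-compatibility \eqref{defirepcrohomor}. I would apply the defining identity of the difference operator $\hat{D}$ (the adjoint version used in Proposition \ref{semiRB}) to the pair $s(x),u$:
\[
\hat{D}([s(x),u]_{\hat{\g}})=[\hat{D}(s(x)),u]_{\hat{\g}}+[s(x),\hat{D}(u)]_{\hat{\g}}+[\hat{D}(s(x)),\hat{D}(u)]_{\hat{\g}}.
\]
The left-hand side equals $K(\varrho(x)u)$ because $\varrho(x)u\in\h$ and $\hat{D}|_{\h}=K$. On the right I would substitute $\hat{D}(u)=K(u)$ and, using (a) together with $p(\hat{D}(s(x)))=D(x)$, write $\hat{D}(s(x))=s(D(x))+\lambda(x)$ with $\lambda(x)\in\h$. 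Discarding every bracket of the form $[\lambda(x),-]_{\hat{\g}}$ whose second argument lies in $\h$ (these vanish by (b)) collapses the three terms to precisely $\varrho(D(x))u+\varrho(x)K(u)+\varrho(D(x))K(u)$, which is \eqref{defirepcrohomor}.

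Finally, for independence of the section, if $s'$ is another section then $s-s'$ takes values in $\h$ by (a), so $\varrho(x)u-\varrho'(x)u=[(s-s')(x),u]_{\hat{\g}}=0$ by (b), giving $\varrho=\varrho'$. I expect the only real subtlety to lie in the bookkeeping of the compatibility step: one must verify that the quadratic correction term of the difference-operator identity is exactly what produces the extra summand $\varrho(D(x))K(u)$ in \eqref{defirepcrohomor}, and that the abelianness of $\h$ is precisely what annihilates the $\omega$ and $\lambda$ corrections. No genuine obstruction arises; once $\h$ is viewed as an abelian ideal and (a),(b) are in hand, the whole statement is a direct verification.
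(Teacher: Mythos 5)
Your proposal is correct and follows essentially the same route as the paper's proof: abelianness of $\h$ kills the correction terms $\omega(x,y)$ and $\lambda(x)=\hat{D}(s(x))-s(D(x))$, the difference-operator identity for $\hat{D}$ applied to $[s(x),u]_{\hat{\g}}$ together with $\hat{D}|_{\h}=K$ yields \eqref{defirepcrohomor}, and section-independence follows from $s-s'$ taking values in $\h$. The only addition is your explicit well-definedness check that $\varrho(x)u\in\Ker p$, which the paper leaves implicit.
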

\begin{proof}
For any $x,y\in\g, u\in\h$, since $\h$ is abelian, we have
\begin{eqnarray*}
\nonumber\varrho([x,y]_\g)u&=&[s([x,y]_\g),u]_{\hat{\g}}\\
\nonumber&=&[[s(x),s(y)]_{\hat{\g}}+s([x,y]_\g)-[s(x),s(y)]_{\hat{\g}},u]_{\hat{\g}}\\
\nonumber&=&[[s(x),s(y)]_{\hat{\g}},u]_{\hat{\g}}\\
\nonumber&=&[[s(x),u]_{\hat{\g}},s(y)]_{\hat{\g}}+[s(x),[s(y),u]_{\hat{\g}}]_{\hat{\g}}\\
\label{rep2}&=&[\varrho(x),\varrho(y)]u.
\end{eqnarray*}
Thus $\varrho$ is a representation of the Lie algebra $(\g, [\cdot,\cdot]_\g)$ on the vector space $\h$.

Furthermore, since $s(D(x))-\hat{D}(s(x))\in\h$ and $\h$ is abelian, then we have
\begin{eqnarray*}
K(\varrho(x)u)&=&K([s(x),u]_{\hat{\g}})=\hat{D}([s(x),u]_{\hat{\g}})\\
&=&[\hat{D}(s(x)),u]_{\hat{\g}}+[s(x), \hat{D}(u)]_{\hat{\g}}+[\hat{D}(s(x)), \hat{D}(u)]_{\hat{\g}}\\
&=&[s(D(x)), u]_{\hat{\g}}+\varrho(x)K(u)+[s(D(x)), K(u)]_{\hat{\g}}\\
&=&\varrho(D(x))u+\varrho(x)K(u)+\varrho(D(x))K(u),
\end{eqnarray*}
which implies that $\varrho:\g\lon\gl(\h)$ is a representation of the difference Lie algebra $(\g, D)$ on the vector space $\h$ with respect to the linear map $K$.

Let $s'$ be another section, and  $\varrho'$ be the corresponding representation of the difference Lie algebra $(\g,D)$ on $\h$ with respect to the linear map $K$. Since $s'(x)-s(x)\in\h$ and $\h$ is abelian, we have
\begin{equation*}
(\varrho'(x)-\varrho(x))u=[s'(x)-s(x), u]_{\hat{\g}}=0.
\end{equation*}
Thus the representation $\varrho$ is independent on the choice of sections.
\end{proof}

Let $s$ be a section. We further define
$
\omega\in\Hom(\wedge^{2}\g,\h),~ \chi\in\Hom(\g,\h)
$
by
\begin{eqnarray*}
\omega(x,y)&=&[s(x),s(y)]_{\hat{\g}}-s([x,y]_\g),\\
\chi(x)&=&\hat{D}(s(x))-s(D(x)).
\end{eqnarray*}
Define $S:\g\oplus \h\lon\hat{\g}$ by
$$
S(x+u)=s(x)+u.
$$
It is obvious that $S$ is an isomorphism between vector spaces. Transfer the difference Lie algebra structure on $\hat{\g}$ to $\g\oplus\h$ via the isomorphism $S$, we obtain a difference Lie algebra $(\g\oplus\h, [\cdot,\cdot]_{\omega}, D_{\chi})$, where $[\cdot,\cdot]_\omega$ and $D_{\chi}$ are given by
\begin{eqnarray*}
[x+u, y+v]_\omega&=&S^{-1}[S(x+u), S(y+v)]_{\hat{\g}}=[x, y]_\g+\varrho(x)v-\varrho(y)u+\omega(x,y),\\
D_{\chi}(x+u)&=&S^{-1}\hat{D}S(x+u)=D(x)+K(u)+\chi(x).
\end{eqnarray*}

\begin{thm}\label{thm:cohomologicalclass}
  With the above notations, $(\omega,\chi)$ is a $2$-cocycle of the difference Lie algebra $(\g, D)$  with coefficients in  $(\h, \varrho, K)$. Moreover, its cohomological class does not depend on the choice of sections.
\end{thm}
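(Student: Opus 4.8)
The plan is to check that $(\omega,\chi)$ satisfies the two defining equations of a $2$-cocycle, namely $\dM^{\CE}_{\varrho}\omega=0$ and $\partial\chi+T(\omega)=0$ (recall that $\delta_{\varrho}(\omega,\chi)=(\dM^{\CE}_{\varrho}\omega,\ \partial\chi+T(\omega))$), and then to examine how $(\omega,\chi)$ changes under a change of section. The first equation $\dM^{\CE}_{\varrho}\omega=0$ is the classical cocycle condition attached to an abelian extension of Lie algebras: expanding the Jacobi identity for $s(x),s(y),s(z)$ in $\hat{\g}$, substituting $[s(x),s(y)]_{\hat{\g}}=s([x,y]_{\g})+\omega(x,y)$ and $[s(x),u]_{\hat{\g}}=\varrho(x)u$ for $u\in\h$, the $s(\g)$-valued part vanishes by the Jacobi identity in $\g$, while the $\h$-valued part is exactly $\dM^{\CE}_{\varrho}\omega(x,y,z)=0$.

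The core of the argument is the identity $\partial\chi+T(\omega)=0$, which encodes the compatibility between the two difference operators. Here I would apply the difference operator equation for $\hat{D}$ on $\hat{\g}$ to the pair $s(x),s(y)$:
\[
\hat{D}\big([s(x),s(y)]_{\hat{\g}}\big)=[\hat{D}(s(x)),s(y)]_{\hat{\g}}+[s(x),\hat{D}(s(y))]_{\hat{\g}}+[\hat{D}(s(x)),\hat{D}(s(y))]_{\hat{\g}}.
\]
On the left I would use $[s(x),s(y)]_{\hat{\g}}=s([x,y]_{\g})+\omega(x,y)$, that $\hat{D}$ restricts to $K$ on $\h$, and that $D$ is a difference operator on $\g$; on the right I would substitute $\hat{D}(s(x))=s(D(x))+\chi(x)$ and repeatedly use $[s(x),u]_{\hat{\g}}=\varrho(x)u$, $[u,s(x)]_{\hat{\g}}=-\varrho(x)u$, and $[\chi(x),\chi(y)]_{\hat{\g}}=0$ (as $\h$ is abelian). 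The summands lying in $s(\g)$ match on the two sides and cancel, and the remaining $\h$-valued terms reassemble precisely into $\partial\chi(x,y)+T(\omega)(x,y)=0$. This component bookkeeping---separating the $s(\g)$-part from the $\h$-part while tracking all the signs---is the main obstacle of the proof.

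Finally, for independence of the chosen section, I would take another section $s'$ and set $b:=s'-s$; since $p\circ(s'-s)=0$, the map $b$ takes values in $\h$. Expanding the data for $s'$ by bilinearity of the bracket and linearity of $\hat{D}$, and using that $\h$ is abelian together with $\hat{D}|_{\h}=K$, I would obtain
\[
\omega'-\omega=\dM^{\CE}_{\varrho}b,\qquad \chi'-\chi=K\circ b-b\circ D=T(b),
\]
where $T(b)$ denotes the value of $T$ on the $1$-cochain $b$. Hence $(\omega',\chi')-(\omega,\chi)=(\dM^{\CE}_{\varrho}b,\ T(b))=\delta_{\varrho}(b)$, so the two cocycles are cohomologous and the class $[(\omega,\chi)]\in\huaH^{2}(\g,D;\h,\varrho,K)$ does not depend on the section, completing the plan.
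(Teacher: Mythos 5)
Your proposal is correct and takes essentially the same route as the paper: the paper transfers the structure to $\g\oplus\h$ via the section-induced isomorphism $S$ and reads off the two identities $\dM^{\CE}_{\varrho}\omega=0$ and $\partial\chi+T(\omega)=0$ from the Jacobi identity of $[\cdot,\cdot]_{\omega}$ and the difference-operator property of $D_{\chi}$, which is term-by-term the same computation as your direct expansion in $\hat{\g}$ of the Jacobi identity at $s(x),s(y),s(z)$ and of the difference equation at $s(x),s(y)$. Your section-independence argument (writing $b=s'-s\in\Hom(\g,\h)$ and obtaining $(\omega',\chi')-(\omega,\chi)=\delta_{\varrho}(b)$) coincides with the paper's, which sets $s'=s+N$.
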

\begin{proof}
First by the fact that $[\cdot,\cdot]_{\omega}$ satisfies the Jacobi identity, we deduce that $\omega$ is a $2$-cocycle of the Lie algebra $(\g,[\cdot,\cdot]_\g)$ with coefficients in $(\h,\varrho)$, i.e. $\dM^{\CE}_{\varrho}\omega=0$.

Moreover, we have
 \begin{equation*}
[D_{\chi}(x+u),D_{\chi}(y+v)]_\omega
=D([x,y]_\g)+K(\varrho(x)v)-K(\varrho(y)u)+K(\omega(x,y))+\chi([x,y]_\g),
 \end{equation*}
and
\begin{eqnarray*}
&&[D_{\chi}(x+u),y+v]_{\omega}+[x+u, D_{\chi}(y+v)]_{\omega}+[D_{\chi}(x+u), D_{\chi}(y+v)]_{\omega}\\
&=&[D(x),y]_\g+\varrho(D(x))v-\varrho(y)K(u)-\varrho(y)\chi(x)+\omega(D(x),y)\\
&&+[x,D(y)]_\g-\varrho(D(y))u+\varrho(x)K(v)+\varrho(x)\chi(y)+\omega(x,D(y))\\
&&+[D(x),D(y)]_\g+\varrho(D(x))K(v)+\varrho(D(x))\chi(y)-\varrho(D(y))K(u)\\
&&-\varrho(D(y))\chi(x)+\omega(D(x),D(y)).
\end{eqnarray*}
By the fact that $D_{\chi}$ is a difference operator and
\begin{equation*}
K(\varrho(x)u)=\varrho(D(x))u+\varrho(x)K(u)+\varrho(D(x))K(u),
\end{equation*}
we have
\begin{eqnarray*}
0&=&-K(\omega(x, y))-\chi([x, y]_\g)+\omega(D(x,  y))+\omega(x, D(y))-\varrho(y)\chi(x)+\varrho(x)\chi(y)\\
&&+\varrho(D(x))\chi(y)-\varrho(D(y))\chi(x)+\omega(D(x), D(y))\\
&=&\partial\chi(x, y)+T(\omega)(x, y),
\end{eqnarray*}
which implies that $ T(\omega)+\partial\chi=0$. Therefore, $\delta_{\varrho}(\omega, \chi)=0,$ i.e. $(\omega,\chi)$ is a 2-cocycle.

Let $s'$ be another section and $(\omega',\chi')$ be the associated  2-cocycle. Assume that $s'=s+N$ for $N\in\Hom(\g,\h)$. Then we have
\begin{eqnarray*}
  (\omega'-\omega)(x,y)&=&[s'(x), s'(y)]_{\hat{\g}}-s'[x, y]_\g-[s(x), s(y)]_{\hat{\g}}+s[x, y]_\g\\
  &=&\varrho(x)N(y)-\varrho(y)N(x)-N([x, y]_\g)=\dM^{\CE}_{\varrho}N(x, y),\\
  (\chi'-\chi)(x)&=&\hat{D}(s'(x))-s'(D(x))-\hat{D}(s(x))+s(D(x))\\
  &=&K(N(x))-N(D(x)),
\end{eqnarray*}
which implies that $(\omega',\chi')-(\omega, \chi)=\delta_{\varrho}(N)$. Thus, $(\omega', \chi')$ and $(\omega, \chi)$ are in the same cohomology class.
\end{proof}

Isomorphisms between abelian extensions can be obviously defined as follows.
\begin{defi}
Let $(\hat{\g} ,\hat{D})$ and $(\tilde{\g}, \tilde{D})$ be two abelian extensions of a difference Lie algebra $(\g, D)$ by $(\h, K)$. They are said to be {\bf isomorphic} if there exists an isomorphism $\kappa:\tilde{\g}\lon\hat{\g}$ of difference Lie algebras such that the following diagram commutes:
  \begin{equation*}
\xymatrix@!0{0\ar@{->} [rr]&& \h \ar@{->} [rr] \ar'[d] [dd] \ar@{=} [rd] && \tilde{\g}\ar'[d] [dd]\ar@ {->} [rr] \ar@{->} [rd]^{\kappa}&& \g\ar@{=} [rd]\ar'[d] [dd]\ar@{->} [rr]&&0&\\
&0\ar@{->} [rr]&& \h\ar@{->} [rr]\ar@{->} [dd]&&\hat{\g}\ar@{->} [dd]\ar@{->} [rr]&&\g\ar@ {->} [dd]\ar@{->} [rr]&&0\\
0\ar@{->} [rr]&&\h\ar'[r] [rr] \ar@{=} [rd]&&\tilde{\g}\ar@{->} [rd]^{\kappa}\ar'[r] [rr]&&\g\ar@{=} [rd] \ar'[r] [rr]&&0&\\
&0\ar@{->} [rr]&& \h\ar@{->} [rr]&&\hat{\g}\ar@{->} [rr]&&\g\ar@{->} [rr]&&0.}
\end{equation*}
\end{defi}

\begin{thm}\label{extension 2}
   For a given representation $(\h, \varrho, K)$ of a difference Lie algebra $(\g, D)$, abelian extensions of $(\g, D)$  by   $(\h, K)$ are classified by the second cohomology group $\huaH^2(\g,D; \h,\varrho, K)$.
\end{thm}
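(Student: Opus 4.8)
The plan is to set up mutually inverse maps between $\huaH^2(\g, D; \h, \varrho, K)$ and the set of isomorphism classes of abelian extensions of $(\g, D)$ by $(\h, K)$ that induce the given representation $(\h, \varrho, K)$; by Proposition \ref{without section} every abelian extension induces a well-defined representation, so this is the natural class to classify. Define $\Phi$ by sending an extension to the class of the $2$-cocycle $(\omega, \chi)$ attached to a section. Theorem \ref{thm:cohomologicalclass} already guarantees that $(\omega, \chi)$ is a cocycle and that its class is independent of the section. To see that $\Phi$ descends to isomorphism classes, take an isomorphism $\kappa$ of two such extensions and a section $\tilde s$ of the source. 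The commutativity of the defining diagram gives $p \circ \kappa = \tilde p$ and $\kappa|_\h = \Id_\h$, so $\kappa \circ \tilde s$ is a section of the target, and since $\kappa$ is a homomorphism of difference Lie algebras the cocycle it produces coincides with the one produced by $\tilde s$. Hence isomorphic extensions give the same class.

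Next I define $\Psi$ in the reverse direction. Given $(\omega, \chi)$ with $\delta_\varrho(\omega, \chi) = 0$, that is $\dM^{\CE}_\varrho \omega = 0$ and $\partial\chi + T(\omega) = 0$, I equip $\g \oplus \h$ with the bracket $[\cdot,\cdot]_\omega$ and operator $D_\chi$ displayed just before Theorem \ref{thm:cohomologicalclass}. The equation $\dM^{\CE}_\varrho \omega = 0$ is precisely the Jacobi identity for $[\cdot,\cdot]_\omega$, so $(\g \oplus \h, [\cdot,\cdot]_\omega)$ is a Lie algebra, and $\partial\chi + T(\omega) = 0$ is precisely the identity making $D_\chi$ a difference operator; both are obtained by reading the computation in the proof of Theorem \ref{thm:cohomologicalclass} backwards. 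This yields an abelian extension inducing $\varrho$. To check that $\Psi$ is well defined on cohomology classes, suppose $(\omega', \chi') - (\omega, \chi) = \delta_\varrho(N)$ for some $N \in \Hom(\g, \h)$, i.e. $\omega' - \omega = \dM^{\CE}_\varrho N$ and $\chi' - \chi = K \circ N - N \circ D$. Then $\kappa(x + u) := x + N(x) + u$ restricts to $\Id_\h$, covers $\Id_\g$, intertwines $[\cdot,\cdot]_\omega$ with $[\cdot,\cdot]_{\omega'}$ (using $\omega' - \omega = \dM^{\CE}_\varrho N$), and intertwines $D_\chi$ with $D_{\chi'}$ (using $\chi' - \chi = K \circ N - N \circ D$), so it is an isomorphism of extensions.

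Finally I verify that $\Phi$ and $\Psi$ are inverse to each other. For $\Phi \circ \Psi$, the extension $(\g \oplus \h, [\cdot,\cdot]_\omega, D_\chi)$ carries the obvious section $x \mapsto x + 0$, whose attached cocycle is exactly $(\omega, \chi)$. For $\Psi \circ \Phi$, given an extension with section $s$ and cocycle $(\omega, \chi)$, the map $S(x + u) = s(x) + u$ is, by the very construction of $[\cdot,\cdot]_\omega$ and $D_\chi$, an isomorphism of difference Lie algebras compatible with the inclusions and projections, so the original extension is recovered up to isomorphism. Together with the two previous paragraphs this establishes the claimed bijection.

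The main obstacle will be the second paragraph: pinning down the exact dictionary between the two halves of $\delta_\varrho(\omega, \chi) = 0$ and the two structural axioms, namely the Jacobi identity for $[\cdot,\cdot]_\omega$ and the difference-operator identity for $D_\chi$, while keeping all signs in $\partial$ and $T$ consistent with their definitions and with the semidirect-product computation of Proposition \ref{semiRB}. Once this correspondence is fixed, verifying that $\kappa = \Id + N$ and $S$ are difference Lie algebra isomorphisms is routine bookkeeping.
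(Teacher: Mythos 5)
Your proposal is correct and follows essentially the same route as the paper's proof: the cocycle $(\omega,\chi)$ attached to a section (with section-independence and isomorphism-invariance via Theorem \ref{thm:cohomologicalclass} and the section $\kappa\circ s$), the construction of $(\g\oplus\h,[\cdot,\cdot]_\omega,D_\chi)$ from a cocycle, and the isomorphism $\kappa=\Id+N$ for cohomologous cocycles. The only difference is that you make the mutual-inverse verification explicit, which the paper leaves implicit in the transfer-of-structure map $S(x+u)=s(x)+u$ discussed before Theorem \ref{thm:cohomologicalclass}.
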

\begin{proof}
 Let $(\hat{\g}, \hat{D})$ and $(\tilde{\g}, \tilde{D})$ be two isomorphic abelian extensions. Assume that $s$ is a section of $(\tilde{\g}, \tilde{D})$, and $(\tilde{\omega}, \tilde{\chi})$ is the corresponding 2-cocycle.
 Define $s'$ by
 \begin{equation*}
 s'=\kappa\circ s.
 \end{equation*}
 Then, it is obvious that $s'$ is a section of $(\hat{\g}, \hat{D})$. We denote by $(\hat{\omega}, \hat{\chi})$ the corresponding 2-cocycle. Then we have
 \begin{eqnarray*}
 \hat{\omega}(x,y)&=&[s'(x), s'(y)]_{\tilde{\g}}-s'([x,y]_\g)\\
 &=&[\kappa(s(x)),\kappa(s(y))]_{\tilde{\g}}-\kappa(s[x,y]_\g)\\
 &=&\kappa([s(x), s(y)]_{\hat{\g}}- s[x,y]_\g)\\
 &=&\tilde{\omega}(x,y).
 \end{eqnarray*}
 Similarly, we have $ \tilde{\chi}=\hat{\chi}.$
 By Theorem \ref{thm:cohomologicalclass},  isomorphic abelian extensions give rise to the same cohomological class in $\huaH^2(\g, D; \h, \varrho, K)$.

 For the converse part, we choose a 2-cocycle $(\omega, \chi)$, and define a bracket on $\g\oplus\h$ by
 \begin{equation*}
 [(x,u),(y,v)]_{\omega}=[x,y]_\g+\varrho(x)v-\varrho(y)u+\omega(x,y),\quad \forall x,y\in\g, u,v\in\h.
 \end{equation*}
 By $\dM^{\CE}_{\varrho}\omega=0$, it is straightforward to deduce that $(\g\oplus\h,[\cdot,\cdot]_{\omega})$ is a Lie algebra.
Define a linear map $D_{\chi}:\g\oplus \h\rightarrow\g\oplus\h$ by
 \begin{equation*}
 D_{\chi}(x, u)=D(x)+K(u)+\chi(x),\quad\forall x\in\g, u\in\h.
 \end{equation*}
 Since $\partial\chi+T(\omega)=0$, it is straightforward to deduce that $D_{\chi}$ is a difference operator. Thus $(\g\oplus\h, [\cdot,\cdot]_{\omega}, D_{\chi})$ is a difference Lie algebra, which is an abelian extension of $(\g, D)$ by $(\h, K)$.

Choose another 2-cocycle $(\omega', \chi')$, such that $(\omega, \chi)$ and $(\omega', \chi')$ are in the same cohomology class, i.e.
\begin{equation*}
(\omega-\omega', \chi-\chi')=(\dM^{\CE}_{\varrho}N, T(N)),
\end{equation*}
where $N\in\Hom(\g,\h)$, and denote the corresponding difference Lie algebra by $(\g\oplus\h, [\cdot, \cdot]_{\omega'}, D_{\chi'})$.
Define the linear map $\kappa:\g\oplus\h\rightarrow\g\oplus\h$ by
\begin{equation*}
\kappa(x, u)=(x, N(x)+u),\\
\end{equation*}
for all $x\in\g, u\in\h$. By $\omega-\omega'=\dM^{\CE}_{\varrho}N$, we deduce that $\kappa$ is a Lie algebra isomorphism.
Since
$
\chi(x)-\chi'(x)=T(N)x=-N(T(x))+K(N(x)),
$
we have $D_{\chi'}(\kappa(x, u))=\kappa(D_{\chi}(x, u))$. Then it is straightforward to deduce that the two abelian extensions are isomorphic.
\end{proof}

\section{Integrations of relative difference Lie algebras}\label{sec:int}
In this section, we show that any relative difference Lie algebra $(\g, \h, \rho, D)$  can be integrated to a relative difference Lie group $(G, H, \Phi, \huaD)$. We also extend the integration to the level of homomorphisms.
\begin{defi}{\rm(\cite{GLS})}
Let $\Phi: G\lon\Aut(H)$ be an action of $G$ on $H$. A smooth map $\huaD: G\lon H$ is called a relative difference operator on the Lie group $(G, e_G, \cdot_G)$ with respect to the action $\Phi$ if
\begin{equation*}
\huaD(a\cdot_G b)=\huaD(a)\cdot_H\Phi(a)\huaD(b), \quad \forall a, b\in G.
\end{equation*}

A {\bf relative difference Lie group}, denoted by $(G, H,\Phi, \huaD)$, consists of Lie groups $G, H$, an action  $\Phi: G\lon\Aut(H)$, and a relative  difference operator $\huaD$.

A relative difference operator from $G$ to $G$ with respect to the adjoint action $\Ad$ is called a {\bf  difference operator} on the Lie group $G$. A Lie group $G$ equipped with a difference operator $\huaD$ is called a {\bf difference Lie group}, and denoted by $(G, \huaD)$.
\end{defi}

\begin{rmk}
If the action $\Phi$ of $G$ on $H$ is trivial, then a relative difference operator from $G$ to $H$ is  a Lie group homomorphism.
\end{rmk}

\begin{defi}
Let $(G, H,\Phi,\huaD)$ and $(G', H',\Phi',\huaD')$ be two relative difference Lie groups. A homomorphism from $(G, H,\Phi,\huaD)$ to $(G',H',\Phi',\huaD')$ consists of a Lie group homomorphism $\Psi_G :G\rightarrow G'$ and a Lie group homomorphism $\Psi_H:H\lon H'$ such that
\begin{eqnarray}
\label{hom-rboG1}\huaD'\circ\Psi_G&=&\Psi_H\circ \huaD,\\
\label{hom-rboG2}\Psi_H\Phi(g)(h)&=&\Phi'(\Psi_G(g))(\Psi_H(h)),\quad \forall g\in G, h\in H.
\end{eqnarray}
\end{defi}

We can use the graphs of maps to characterize relative difference operators on Lie groups. The following proposition is obvious.
\begin{pro}
Let $\Phi: G\lon\Aut(H)$ be an action of $G$ on $H$. Then a smooth map $\huaD: G\lon H$ is a relative difference operator if and only if $Gr(\huaD)=\{(g, \huaD(g))|g\in G\}$ is a Lie subgroup of $G\ltimes_\Phi H$, where $G\ltimes_\Phi H$ is the semi-direct production Lie group with the multiplication $\cdot_\Phi$ given by
\begin{equation*}
(g_1,h_1)\cdot_{\Phi}(g_2,h_2)=(g_1\cdot_Gg_2, h_1\cdot_H\Phi(g_1)h_2), \quad \forall g_i\in G, h_i\in H, i=1,2.
\end{equation*}
\end{pro}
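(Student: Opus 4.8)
The plan is to reduce the Lie subgroup condition to a purely algebraic subgroup condition, since the graph of any smooth map is automatically an embedded submanifold. First I would observe that the map $g\mapsto(g,\huaD(g))$ is a smooth embedding of $G$ into $G\ltimes_\Phi H$ with image exactly $Gr(\huaD)$; consequently $Gr(\huaD)$ is always a submanifold diffeomorphic to $G$ via the first projection, and the only thing left to verify is that it is stable under the group operations of $G\ltimes_\Phi H$.

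The heart of the argument is a single computation: expanding the product of two graph elements with the semidirect product multiplication gives
\begin{equation*}
(a,\huaD(a))\cdot_\Phi(b,\huaD(b))=\big(a\cdot_G b,\ \huaD(a)\cdot_H\Phi(a)\huaD(b)\big),
\end{equation*}
which lies in $Gr(\huaD)$ if and only if $\huaD(a)\cdot_H\Phi(a)\huaD(b)=\huaD(a\cdot_G b)$. This is precisely the defining identity of a relative difference operator, so closure of $Gr(\huaD)$ under multiplication is equivalent to $\huaD$ being a relative difference operator, and this single equivalence supplies both directions of the proposition.

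It then remains to check that closure under multiplication already forces the identity and inverses into $Gr(\huaD)$, so that the graph is genuinely a subgroup and not merely a sub-semigroup. Putting $a=b=e_G$ and using $\Phi(e_G)=\Id$ in the difference operator identity yields $\huaD(e_G)=\huaD(e_G)\cdot_H\huaD(e_G)$, hence $\huaD(e_G)=e_H$, so the identity $(e_G,e_H)$ belongs to $Gr(\huaD)$. Putting $b=a^{-1}$ then gives $e_H=\huaD(a)\cdot_H\Phi(a)\huaD(a^{-1})$, whence $\huaD(a^{-1})=\Phi(a^{-1})(\huaD(a)^{-1})$; this is exactly the $H$-component of the inverse of $(a,\huaD(a))$ in $G\ltimes_\Phi H$, so inverses remain in the graph as well. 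For the reverse implication, if $Gr(\huaD)$ is a subgroup then in particular it is closed under multiplication, and reading the displayed identity backwards recovers the relative difference operator condition for all $a,b\in G$.

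As the paper already indicates, there is no genuine obstacle in this argument: it is a direct unwinding of the two definitions. The only point deserving a moment's care is the smoothness claim, namely that $Gr(\huaD)$ is an embedded rather than merely immersed submanifold; this is the standard fact that the graph of a smooth map is a closed embedding, with the first projection furnishing the inverse diffeomorphism.
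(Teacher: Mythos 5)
Your proof is correct and is exactly the direct unwinding of definitions that the paper has in mind when it declares this proposition obvious (the paper supplies no written proof at all): the semidirect product multiplication of two graph points lies in the graph precisely when the relative difference operator identity holds, and the embedded-submanifold structure comes for free from the graph of a smooth map. Your additional care in checking $\huaD(e_G)=e_H$ and closure under inversion, and in noting that the graph is embedded rather than merely immersed, completes the statement properly and matches the intended argument.
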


Let $(G, e_G, \cdot_G)$ and $(H, e_H, \cdot_H)$ be Lie groups whose Lie algebras are $\g$ and $\h$. Denote by $\exp_G$ and $\exp_H$ the exponential maps for the Lie groups $(G, e_G, \cdot_G)$ and $(H, e_H, \cdot_H)$ respectively.
Let $\Phi: G\rightarrow \Aut(H)$ be an action of $G$ on $H$.   Since
$\Phi(g)\in\Aut(H)$ for all $g\in G$, then $\Phi(g)_{*e_H}:
\h\rightarrow\h$ is a Lie algebra isomorphism. By $\Phi(g_1\cdot_G
g_2)=\Phi(g_1)\Phi(g_2)$, we have $\Phi(g_1\cdot_G
g_2)_{*e_H}=\Phi(g_1)_{*e_H}\Phi(g_2)_{*e_H}$. Thus we obtain a Lie
group homomorphism from $G$ to $\Aut(\h)$, which we denote by
$\tilde{\Phi}:G\lon\Aut(\h)$.
Then taking the differentiation, we obtain a Lie algebra homomorphism
$\rho:=\tilde{\Phi}_{*e_G}$ from the Lie algebra $\g$ to $\Der(\h)$. We call
$\rho$ the differentiated action of $\Phi$. In fact, Lie II theorem tells us that
$\Aut(H)\cong \Aut(\h)$, if $H$ is connected and simply
connected. Therefore when both $G$ and $H$
are connected and simply connected, given a Lie algebra action $\rho: \g \to
\Der(\h)$, there is a unique integrated action $\Phi: G\to \Aut(H)$
whose differentiation is $\rho$.  This procedure can be well explained by the following diagram:
\begin{equation}\label{eq:relation}
\small{ \xymatrix{G\ar@{=}[d]  \ar[rr]^{\Phi} & & \Aut(H) \ar[d]\\
 G \ar[rr]^{\tilde{\Phi}}\ar[d]^{\text{differentiation}} &  &  \Aut(\h) \ar[d]^{\text{differentiation}}  \\
 \g \ar[rr]^{  \rho=\tilde{\Phi}_{*e_G} } & &\Der(\h).}
}
\end{equation}

In \cite{GLST}, the authors proved that the differentiation of a relative difference operator $\huaD$ on a Lie group $G$ with respect to an action $(H, \Phi)$ is a relative difference operator $D$ on the Lie algebra $\g$ with respect to the action $(\h, \rho)$. Next, we give the integration of a relative difference Lie algebra $(\g, \h, \rho, D)$.

\begin{thm} \label{thm:obj}
Let $(\g, \h, \rho, D)$ be a relative difference Lie algebra. Let $G$ and $H$ be the connected and simply connected Lie
groups integrating $\g$ and $\h$ respectively, and $\Phi: G\to \Aut(H)$ be the
integrated action.
Then there is a relative difference operator $\huaD:G\lon H$ integrating the relative difference operator $D:\g\lon\h$.
\end{thm}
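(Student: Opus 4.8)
The plan is to realize $\huaD$ through the graph of $D$, integrating at the level of Lie group homomorphisms rather than integrating a subalgebra directly. By the graph characterization (Proposition \ref{graphch}), the subspace $G(D)=\{(x,D(x))\mid x\in\g\}$ is a Lie subalgebra of the semidirect product $\g\ltimes_\rho\h$, and the map $\phi:\g\lon\g\ltimes_\rho\h$, $\phi(x)=(x,D(x))$, is an injective Lie algebra homomorphism with image $G(D)$. I would first record that, since $G$ and $H$ are connected and simply connected and $\Phi$ is the integrated action of diagram \eqref{eq:relation}, the semidirect product Lie group $G\ltimes_\Phi H$ is itself connected and simply connected (its underlying manifold is $G\times H$) and has Lie algebra $\g\ltimes_\rho\h$; in particular $\phi$ is a homomorphism into the Lie algebra of $G\ltimes_\Phi H$.

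Next, because $G$ is connected and simply connected, Lie's second theorem yields a unique Lie group homomorphism $F:G\lon G\ltimes_\Phi H$ with $F_{*e_G}=\phi$. The key step --- and the point I expect to be the main obstacle --- is to show that $F$ is the graph of a smooth map, that is $F(g)=(g,\huaD(g))$ for a smooth $\huaD:G\lon H$. I would argue this by means of the projection $\pr_G:G\ltimes_\Phi H\lon G$: the composite $\pr_G\circ F:G\lon G$ is a Lie group homomorphism whose differential at $e_G$ is $\pr_\g\circ\phi=\Id_\g$, so by the uniqueness clause of Lie's second theorem (which is where simple connectedness of $G$ is essential) we get $\pr_G\circ F=\Id_G$. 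Setting $\huaD:=\pr_H\circ F$ then gives $F(g)=(g,\huaD(g))$ with $\huaD$ smooth.

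Finally, unwinding the homomorphism identity $F(a\cdot_G b)=F(a)\cdot_\Phi F(b)$ in the coordinates $G\times H$, and using the definition of $\cdot_\Phi$, the first components agree automatically and the second components give exactly $\huaD(a\cdot_G b)=\huaD(a)\cdot_H\Phi(a)\huaD(b)$; hence $\huaD$ is a relative difference operator with respect to $\Phi$. To see that it integrates $D$, I would compute $\huaD_{*e_G}=\pr_\h\circ F_{*e_G}=\pr_\h\circ\phi=D$. The delicate ingredient throughout is simple connectedness of $G$: without it the integrated homomorphism $F$ could fail to be a genuine graph, covering $G$ only through a nontrivial covering map, and the identity $\pr_G\circ F=\Id_G$ would break down.
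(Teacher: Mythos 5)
Your proof is correct, and it takes a genuinely different (though closely related) route from the paper's. The paper works at the level of subgroups: it integrates the subalgebra $Gr(D)\subset\g\ltimes_\rho\h$ (Proposition \ref{graphch}) to a connected immersed Lie subgroup $E$ of $G\ltimes_\Phi H$, argues that the projection $P_G\colon E\lon G$ is a Lie group isomorphism because its differential $(x,D(x))\mapsto x$ is an isomorphism and $G$ is simply connected, and then reads off $\huaD$ from $E=Gr(\huaD)$; multiplicativity of the subgroup $E$ gives the relative difference identity, and as a by-product one obtains the explicit local formula $\huaD(\exp_Gx)=P_H\EXP(x,D(x))$ of \eqref{eq:exp}. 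You instead integrate the Lie algebra homomorphism $\phi(x)=(x,D(x))$ itself, via Lie's second theorem, to $F\colon G\lon G\ltimes_\Phi H$, and recover the graph property from the uniqueness clause applied to $\pr_G\circ F$ versus $\Id_G$. Your route treats Lie II as a black box and thereby avoids handling immersed integral subgroups and the covering-space argument that the paper's isomorphism claim for $P_G$ tacitly requires (local diffeomorphism, open image, $E$ connected, $G$ connected and simply connected); in fact the paper's proof essentially inlines the standard proof of Lie II, which proceeds precisely by integrating a graph subalgebra. What the paper's construction buys in exchange is the subgroup $E$, the exponential formula, and a picture that is reused verbatim when integrating homomorphisms in Theorem \ref{thm:int-functor}. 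One small correction to your commentary: the uniqueness clause of Lie's second theorem needs only connectedness of $G$, while simple connectedness is what guarantees the \emph{existence} of $F$. So in your approach, if $F$ exists at all it is automatically a graph; the failure mode without simple connectedness is that $F$ need not exist. The scenario you describe, a graph-like object covering $G$ nontrivially, is the failure mode of the paper's approach (where $E$ always exists and $P_G\colon E\lon G$ is a covering), not of yours.
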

\begin{proof}
We consider the semi-direct product Lie group $G\ltimes_\Phi H$.
Its Lie algebra is the semi-direct product Lie algebra $\g\ltimes_\rho
\h$, and by Proposition \ref{graphch},  the graph of $Gr(D)$  is a Lie subalgebra
of $\g\ltimes_\rho \h$. Thus there exists a connected Lie subgroup $E$ of
$G\ltimes_\Phi H$ such that its Lie algebra is $Gr(D)$.
Define a smooth map $P_G: E\lon G$ by
\begin{equation*}
P_G(g,h)=g,\quad \forall (g,h)\in E.
\end{equation*}
Then $P_G: E \lon G$ is a Lie group homomorphism and its tangent map at the identity ${P_G}_*: Gr(D)\lon\g$ is given by
\begin{equation*}
{P_G}_{*}(x, D(x))=x,\quad \forall x\in\g,
\end{equation*}
which is an isomorphism from the Lie algebra $Gr(D)$ to the Lie algebra $\g$.  Thus $P_G: E\lon G$ is a Lie group isomorphism, which implies that there is a smooth map $\huaD: G\lon H$, such that $E\cong Gr(\huaD)$.

For all $g_1,g_2\in G$, we have
\begin{equation*}
(g_1, \huaD(g_1))\cdot_\Phi (g_2, \huaD(g_2))=(g_1\cdot_G g_2, \huaD(g_1)\cdot_H\Phi(g_1)\huaD(g_2))\in Gr({\huaD}),
\end{equation*}
which implies that
$\huaD(g_1\cdot_G g_2)=\huaD(g_1)\cdot_F\Phi(g_1)\huaD(g_2)$. Therefore,
$\huaD:G\lon H$ is a difference operator. Furthermore, since $Gr(\huaD)\cong E$, and the Lie algebra
of $E$ is $Gr(D)$, it follows that $\huaD_*=D$, and $\huaD$ is an
integration of $D$.

Now we give an explicit
formula of $\huaD$. Denote by $\EXP$ the exponential map for the Lie group
$G\ltimes_\Phi H$, and by $P_H$ the projection $G\ltimes_{\Phi}
H\to H$. For all $x\in\g, u\in\h$, it is obvious that
\begin{equation*}
\EXP (x,u)=(\exp_Gx, P_H\EXP(x,u)).
\end{equation*}
Since the Lie algebra of the Lie subgroup $E$  is $Gr(D)$, it follows that locally $\EXP(x,D(x))\in Gr(\huaD)\cong E$. Therefore,
\begin{equation} \label{eq:exp}
\huaD(\exp_Gx)=P_H\EXP (x,D(x)).
\end{equation}
Obviously the formula of $\huaD$ happens locally near the identity, the global formula follows from the fact that $G$ is connected and it can be written as products of elements near the
identity.
\end{proof}

Let $G, G', H$ and $H'$ be connected Lie groups whose Lie algebras are  $\g, \g', \h$ and $\h'$ respectively. Let $\Phi: G\rightarrow\Aut(H) $ and $\Phi': G'\lon\Aut(H')$ be actions of
  $G$ on  $H$ and $G'$ on $H'$ respectively. Let $\rho:\g\lon\Der(\h)$ and $\rho':\g'\lon\Der(\h')$ be the induced  actions of $\g$ on $\h$ and $\g'$ on $\h'$ respectively. Let  $\Psi_G:G\lon G'$ and $\Psi_H: H\lon H'$ be Lie group homomorphisms and $\psi_\g:\g\lon\g'$ and $\psi_\h:\h\lon\h'$ be the induced Lie algebra homomorphisms.

Let $(\Psi_G, \Psi_H)$ be a homomorphism from the relative difference Lie group $(G, H,\Phi,\huaD)$ to the relative difference Lie group $(G',H',\Phi',\huaD')$. It is obvious that $((\Psi_G)_{*e_G}, (\Psi_H)_{*e_H})$ is a homomorphism from the relative difference Lie algebra $(\g, \h, \rho, D)$ to the relative difference Lie algebra $(\g', \h', \rho', D')$. Now we extend the integration to the level of homomorphisms.
\begin{thm} \label{thm:int-functor}
 Let $(\g, \h,\rho, D)$ and $(\g', \h',\rho', D')$ be two relative difference Lie algebras. Let $G, G', H$ and $H'$ be connected and
  simply connected Lie groups integrating $\g, \g', \h$ and $\h'$ respectively.
  Let $\huaD: G\to H$ and $\huaD':
  G'\to H'$ be the integrated relative difference operators of $D$
  and $D'$ respectively. Let $(\psi_\g, \psi_\h)$ be a homomorphism from $(\g, \h,\rho, D)$ to $(\g', \h',\rho', D')$ and $\Psi_G, \Psi_H$ be Lie group homomorphisms integrating
Lie algebra homomorphism $\psi_\g$ and $\psi_\h$ respectively.  Then
$(\Psi_G,\Psi_H)$ is a homomorphism from the relative difference Lie group $(G, H, \Phi, \huaD)$ to the relative difference Lie group $(G', H', \Phi', \huaD')$.
\end{thm}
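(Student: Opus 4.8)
The plan is to reduce both defining conditions \eqref{hom-rboG1} and \eqref{hom-rboG2} of a homomorphism of relative difference Lie groups to the functoriality of integration for semidirect products, using the graph characterizations from Proposition \ref{graphch} and Theorem \ref{thm:obj}. The central object is the product map $\Xi := \Psi_G \times \Psi_H : G\ltimes_\Phi H \lon G'\ltimes_{\Phi'}H'$, $(g,h)\mapsto (\Psi_G(g),\Psi_H(h))$. I will first show that $\Xi$ is a Lie group homomorphism (this encodes \eqref{hom-rboG2}), then show that $\Xi$ carries $Gr(\huaD)$ into $Gr(\huaD')$ (this encodes \eqref{hom-rboG1}), reading off the two conditions from these facts.

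First I would establish \eqref{hom-rboG2}, which amounts to showing that $\Xi$ is a homomorphism of semidirect product Lie groups. Since $(\psi_\g,\psi_\h)$ is a homomorphism of relative difference Lie algebras, the remark following the definition of such homomorphisms gives that $\psi_\g\times\psi_\h$ is a Lie algebra homomorphism from $\g\ltimes_\rho\h$ to $\g'\ltimes_{\rho'}\h'$. As $G$ and $H$ are connected and simply connected, so is $G\ltimes_\Phi H$ (its underlying manifold is $G\times H$), and Lie's second theorem yields a unique integrating Lie group homomorphism $\Xi_0$. Restricting $\Xi_0$ to the subgroups $G\times\{e_H\}$ and $\{e_G\}\times H$ and matching differentials with $g\mapsto(\Psi_G(g),e_{H'})$ and $h\mapsto(e_{G'},\Psi_H(h))$, the uniqueness clause (using connectedness of $G$ and $H$) forces $\Xi_0(g,e_H) = (\Psi_G(g),e_{H'})$ and $\Xi_0(e_G,h) = (e_{G'},\Psi_H(h))$. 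Applying $\Xi_0$ to the identity $(g,e_H)\cdot_\Phi(e_G,h)\cdot_\Phi(g,e_H)^{-1} = (e_G,\Phi(g)h)$ and comparing the $H'$-components then yields exactly \eqref{hom-rboG2}; equivalently this identifies $\Xi_0$ with $\Xi$, so $\Xi$ is a group homomorphism.

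Next I would prove \eqref{hom-rboG1}. The differential of $\Xi$ at the identity is $\psi_\g\times\psi_\h$, and by \eqref{hom-rbo1} we have $(\psi_\g\times\psi_\h)(x,D(x)) = (\psi_\g(x), D'(\psi_\g(x)))$, so $\psi_\g\times\psi_\h$ maps the subalgebra $Gr(D)$ into $Gr(D')$. Recalling from the proof of Theorem \ref{thm:obj} that $Gr(\huaD)$ is the connected Lie subgroup $E$ integrating $Gr(D)$, and likewise that $Gr(\huaD')$ integrates $Gr(D')$, the homomorphism $\Xi$ must send the connected integral subgroup of $Gr(D)$ into that of $Gr(D')$; that is, $\Xi(Gr(\huaD))\subseteq Gr(\huaD')$. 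Evaluating on $(g,\huaD(g))\in Gr(\huaD)$ gives $(\Psi_G(g),\Psi_H(\huaD(g)))\in Gr(\huaD')$, whose first component is $\Psi_G(g)$; since elements of $Gr(\huaD')$ are determined by their first component, this forces $\Psi_H(\huaD(g)) = \huaD'(\Psi_G(g))$ for all $g\in G$, which is \eqref{hom-rboG1}.

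I expect the main obstacle to be the first step: one must ensure that $\Psi_G\times\Psi_H$ is genuinely a group homomorphism of the semidirect products, rather than merely a smooth map inducing $\psi_\g\times\psi_\h$, and this is exactly where simple connectivity of $G\ltimes_\Phi H$ and the uniqueness clause of Lie's second theorem are indispensable. Once $\Xi$ is known to be a homomorphism with differential $\psi_\g\times\psi_\h$, the remaining graph argument for \eqref{hom-rboG1} is routine, relying only on the standard fact that a Lie group homomorphism maps the connected integral subgroup of a subalgebra into that of its image.
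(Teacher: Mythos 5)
Your proposal is correct and follows essentially the same route as the paper: integrate $(\psi_\g,\psi_\h)$ to a Lie group homomorphism of the semidirect products $G\ltimes_\Phi H\to G'\ltimes_{\Phi'}H'$, observe that the image of $Gr(\huaD)$ is a connected subgroup whose Lie algebra lies in $Gr(D')$ and hence is contained in $Gr(\huaD')$, and read off \eqref{hom-rboG1} from the graph. The only difference is one of detail: you explicitly verify that the integrated homomorphism of the semidirect product coincides with $\Psi_G\times\Psi_H$ and deduce \eqref{hom-rboG2} from a conjugation identity, steps the paper leaves implicit.
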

\begin{proof}
Suppose that $(\psi_\g,\psi_\h)$ is a homomorphism from $(\g, \h, \rho, D)$ to $(\g', \h', \rho', D')$.  It follows that $(\psi_\g,\psi_\h)$ is a Lie algebra homomorphism from the semi-direct
product Lie algebra $\g\ltimes_{\rho}\h$ to $\g'\ltimes_{\rho'}\h'$ and
$(\psi_\g,\psi_\h)(Gr(D))\subseteq Gr(D')$. Therefore, there exist  unique Lie group homomorphisms
$\Psi_G:G\lon G'$ and $ \Psi_H: H\lon H'$ such that $(\Psi_G, \Psi_H)$
is a Lie group homomorphism from the semi-direct product  Lie group
$G\ltimes_\Phi H$ to $G'\ltimes_{\Phi'} H'$. Since the image $(\Psi_G,\Psi_H)Gr(\huaD)$ is a Lie subgroup
of $G'\ltimes_{\Phi'} H'$ and its Lie algebra is $(\psi_\g,\psi_\h)(Gr(D))
\subseteq Gr(D')$, it follows that $(\Psi_G,\Psi_H)Gr(\huaD)\subseteq
Gr(\huaD')$ near the identity,  the global statement follows again
using the fact that both $(\Psi_G,\Psi_H)Gr(\huaD)$ and $Gr(\huaD)$ are
connected and they can be written as products of elements near the
identity.  This in turn implies that $\Psi_H\circ \huaD=\huaD'\circ \Psi_G$.
\end{proof}

\vspace{2mm}
\noindent
{\bf Acknowledgements. } This research is supported by NSFC (11922110).

 \end{document}